\documentclass[12pt]{article}

\usepackage[margin=1in]{geometry}
\usepackage{setspace}
\usepackage[T1]{fontenc}
\usepackage[utf8]{inputenc}

\usepackage{amsmath,amssymb,amsfonts,amsthm,mathtools}
\usepackage{natbib}
\usepackage{graphicx}
\usepackage{booktabs}
\usepackage{multirow}
\usepackage{algorithm}
\usepackage{algorithmic}
\usepackage{enumitem}
\usepackage{bbm, bm}
\usepackage{multirow}
\usepackage{booktabs}
\usepackage{mathtools}
\usepackage{commath}
\usepackage{algorithm}
\usepackage{algorithmic}

\allowdisplaybreaks

\usepackage[
    colorlinks=true,
    citecolor=blue,
    linkcolor=blue,
    urlcolor=blue
]{hyperref}

\newtheorem{theorem}{Theorem}[section]
\newtheorem{lemma}[theorem]{Lemma}
\newtheorem{proposition}[theorem]{Proposition}

\theoremstyle{definition}

\newtheorem{remark}[theorem]{Remark}

\newcommand{\ind}[0]{\mathbbm{1}}
\DeclareMathOperator{\E}{\mathbb{E}}
\newcommand{\R}{\mathbb{R}}

\newcommand{\Exp}{\mathbb{E}}
\newcommand{\Prob}{\mathbf{P}}

\newcommand{\sni}{\sum_{i_1 = 1}^{n}}

\newcommand{\sm}{\sum_{m = 1}^{d}}

\newcommand{\Ga}{\Gamma_n}

\newcommand{\Ghat}{\widehat{\Gamma}_n}
\newcommand{\Gtill}{\widehat{\Gamma}_n^\text{JMB}}

\newcommand{\inr}{\mathcal{I}_n^r}
\newcommand{\inrj}{\mathcal{I}_{n,-j}^r}

\newcommand{\iinr}{(i_1,\dots,i_r)\in\mathcal{I}_n^r}

\newcommand{\jjnr}{(j_1,\dots,j_r)\in\mathcal{I}_n^r}

\newcommand{\iinri}{(i_2,\dots,i_r)\in\mathcal{I}_{n,-i_1}^r}
\newcommand{\jjnri}{(j_2,\dots,j_r)\in\mathcal{I}_{n,-i_1}^r}
\newcommand{\kknri}{(k_2,\dots,k_r)\in\mathcal{I}_{n,-i_1}^r}

\newcommand{\npr}{P_{n,r}}

\newcommand{\nprc}{P_{n,2r-c}}

\newcommand{\nopr}{P_{n-1,r-1}}

\newcommand{\cpr}{P_{r,c}}

\newcommand{\xir}{X_{i_{[1:r]}}}
\newcommand{\xjr}{X_{j_{[1:r]}}}

\newcommand{\xijr}{X_{i_1},X_{j_{[2:r]}}}
\newcommand{\xiir}{X_{i_1},X_{i_{[2:r]}}}

\newcommand{\icnri}{(i_1,\dots,i_{2r-c})\in\mathcal{I}_{n}^{2r-c}}

\newcommand{\xcri}{X_{i_{[1:c]}},X_{i_{[(c+1):r]}}}
\newcommand{\xcrii}{X_{i_{[1:c]}},X_{i_{[(r+1):(2r-c)]}}}

\newcommand{\jjcr}{(j_1,\dots,j_{r})\in\mathcal{C}_{n}^{r}}

\newcommand{\xixr}{X\xi_{[1:r]}}
\newcommand{\ixr}{\xi_{i_{[1:r]}}}

\newcommand{\ixrr}{\xi_{{[1:c]}},\xi_{{[(c+1):r]}}}
\newcommand{\ixrrr}{\xi_{{[1:c]}},\xi_{{[(r+1):(2r-c)]}}}

\newcommand{\ixjr}{\xi_{j_{[1:r]}}}

\newcommand{\ixccr}{\xi_{i_{[(r+1):2r-c]}}}
\newcommand{\ixcor}{\xi_{i_{[(c+1):r]}}}

\newcommand{\h}{\mathcal{H}_n}

\newcommand{\hs}{\text{HS}}
\newcommand{\opr}{\text{op}}
\newcommand{\la}{\langle}
\newcommand{\ra}{\rangle}

\newcommand{\iicr}{(i_1,\dots,i_r)\in\mathcal{C}_n^r}

\newcommand{\iicrc}{(i_1,\dots,i_{2r-c})\in\mathcal{C}_n^{2r-c}}
\newcommand{\jjnci}{(j_2,\dots,j_r)\in\mathcal{C}_{n,-i_1}^r}
\newcommand{\kknci}{(k_2,\dots,k_r)\in\mathcal{C}_{n,-i_1}^r}

\newcommand{\Pp}{\mathbb P}

\newcommand{\cP}{\mathcal P}
\newcommand{\cG}{\mathcal G}
\newcommand{\TV}{\mathrm{TV}}

\DeclareMathOperator{\trc}{tr}
\DeclareMathOperator{\tr}{tr}

\title{\bfseries Berry--Esseen bounds and bootstrap approximations for the Hilbert-space norm of $U$-statistics}

\author{
Nilanjan Chakraborty\thanks{Department of Mathematics and Statistics, Missouri University of Science and Technology, Rolla, MO, USA}
\and
Sayan Das\thanks{Department of Statistics and Data Science, Washington University in St. Louis, St. Louis, MO, USA}
}

\date{}

\begin{document}

\maketitle


\begin{abstract}
We establish non-asymptotic Berry--Esseen bounds and bootstrap approximations for the Hilbert-space norm of nondegenerate $U$-statistics. Our triangular-array framework allows the kernel to take values in a separable Hilbert space $\h$ that may vary with the sample size, thereby covering both infinite-dimensional spaces and Euclidean spaces of increasing dimension. The Gaussian approximation combines a Hilbert-space version of Stein’s method with refined control of the higher-order terms in the Hoeffding decomposition. Its error depends on fourth moments of the kernel and on the spectral geometry of the covariance operator of the Hájek projection, particularly the proportion of squared spectral mass lying outside the leading eigendirection. Consequently, the theory accommodates singular and approximately low-rank covariance operators, provided that sufficient spectral mass remains beyond the leading direction. For $\h=\R^{d_n}$, we obtain dimension-explicit bounds under coordinatewise fourth-moment conditions. We also establish approximation bounds for the empirical, Gaussian-weighted, and jackknife multiplier bootstraps, yielding tests with asymptotically correct size and consistency against alternatives at covariance-dependent separation rates. For testing the vector of pairwise Kendall’s tau coefficients, a matching minimax lower bound shows that the resulting separation rate is minimax rate-optimal over dense Gaussian correlation alternatives.
\end{abstract}

\vspace{0.5em}

\noindent
\textbf{Keywords:}
Euclidean balls; H\'ajek projection; high-dimensional inference; Kendall’s tau; minimax testing; Stein’s method; triangular arrays.

\section{Introduction}
\subsection{Overview}
Quantifying the accuracy of distributional approximations for high-dimensional statistics is one of the fundamental problems in modern probability and statistics. Of particular interest is the interplay between a class of sets over which one compares the distributions, the dimension of the statistics, and the sample size of the data. 
The recent literature has seen a rapid development of the understanding of such interplay for the sample mean in high dimensions; see, e.g., a recent review article \cite{chernozhukov2023high, Chakraborty2025} and a literature review below. Yet, relatively less is known for \textit{nonlinear} statistics regarding sharp conditions on the dimension for tractable distributional approximations to hold. The study of $U$-statistics constitutes a cornerstone of non-parametric statistics, providing a powerful and general framework for estimation and inference. $U$-statistics allow researchers to study complex, non-linear functionals of a data-generating process, examples of such applications include independence testing (see, e.g., \cite{bergsma2014consistent,yao2018testing,leung2018testing,berrett2021optimal} for recent contributions), testing for qualitative features of nonparametric functions \cite{abrevaya2005nonparametric,ghosal2000testing,lee2009testing}, and cross validation for density estimation \cite{nolan1987u}. We refer the readers to \cite{serfling2009, bose2018, lee2019u, korolyuk2013theory} for excellent references on $U$-statistics and $U$-processes. 

In modern statistical applications such as functional data analysis (FDA) and high-dimensional inference, the data-generating mechanisms increasingly reside in the infinite-dimensional Hilbert spaces $\h$ or high-dimensional Euclidean spaces $\R^{d_n}$. Conducting valid statistical inference in these environments requires a deep understanding of the non-asymptotic behavior of these statistics, particularly the behavior of their geometric norms. The classical asymptotic theory for $U$-statistics relies heavily on the Hoeffding decomposition and the H\'ajek projection. By approximating the non-linear kernel with a sum of independent linear terms, one typically establishes a central limit theorem (CLT) \cite{HoeffdingU}. The classical Berry--Esseen theorem originates in the work of
\cite{Berry1941,Esseen1942}, which established quantitative rates of normal
approximation for sums of independent real-valued random variables.
For real-valued nondegenerate $U$-statistics, an analogous univariate
Berry--Esseen theory was developed by \cite{callaert1978berry}, who
obtained the canonical $n^{-1/2}$ rate under suitable moment conditions on
the kernel. Multivariate and infinite-dimensional extensions were developed
by, among others, \cite{bhattacharya2010normal,kuelbs1974berry,
sazonov1989asymptotically, bentkus2003dependence}. Of particular relevance to the present work,
\cite{korolyuk1990rate, borovskikh1997normal} studied rates of normal approximation for
Hilbert-valued $U$-statistics over balls. 

Existing Berry--Esseen-type bounds for Hilbert-valued $U$-statistics often rely on global structural constraints; for instance, classical techniques relying on characteristic function smoothing frequently require that the underlying covariance operator possesses an infinite sequence (or at least a substantial number) of strictly positive eigenvalues bounded away from zero \cite{korolyuk1990rate, borovskikh1997normal}. 

In this paper, we attempt to address these limitations by establishing non-asymptotic Gaussian and bootstrap approximation bounds for Hilbert-valued and large-dimensional $U$-statistics under relaxed spectral and moment conditions. 
We assume that $\bm X = \{X_1, X_2,\dots,$ $X_n\}$ are independent but not necessarily identically distributed random elements taking values in some measurable space $\mathcal{X}$. Further, let \((\h,\langle\cdot,\cdot\rangle)\) be a separable Hilbert space, equipped with the norm induced by the inner product,
$
\|u\|:=\sqrt{\langle u,u\rangle}, u\in\h.
$ For a measurable and symmetric kernel $\tilde{h}_n : \mathcal{X}^r  \to \h $ of order $r$, we consider the $U$-statistic 
\begin{equation*}
    U_n = U_n(\tilde{h}_n) = \frac{1}{\binom{n}{r}} \sum_{1 \le i_1 < \cdots < i_r \le n} \tilde{h}_n(X_{i_1},\dots,X_{i_r}).
\end{equation*}
We will assume that
the order $r$ is fixed and denote the expectation of $U_n$ by $\theta_n$, i.e.,
\[
\theta_n = \frac{1}{\binom{n}{r}} \sum_{1 \le i_1 < \cdots < i_r \le n}\E[\tilde{h}_{n}(X_{i_1},\dots,X_{i_r})],
\]
where we assume that the expectations on the right-hand side are all finite. Henceforth, we suppress $n$ in $\tilde h_n$ and $\theta_n$ and denote them as $\tilde h$ and $\theta$ respectively. The expectation of a random element in a separable Hilbert space will be in the Bochner sense (see \cite{araujo1980central}). 
The goal of this paper is to establish the accuracy assessment for the Gaussian and bootstrap approximations to the normalized statistic 
\[
T_n =\sqrt{n}(U_n-\theta)/r,
\]
under the Hilbert norm induced by the inner product, which amounts to approximating the sampling distribution of $T_n$ over the class of centered balls in $\h$. In view of the Hoeffding decomposition, the Gaussian analog of $T_n$ is given by $Z_n \sim N_{\h} (0,\Gamma_n)$ with $\Gamma_n$ being the covariance operator of the H\'ajek projection. Our first main result establishes finite-sample upper bounds on 
\begin{equation}\label{berry}
d_{\mathrm{Kol}}(\|T_n\|,\|Z_n\|) := \sup_{a \ge 0} |\Prob(\|T_n\| \le a) - \Prob(\|Z_n\| \le a)|,
\end{equation}
under a finite fourth moment condition on the kernel $\tilde{h}_n$
 and other mild conditions. The approximation rate for \eqref{berry} is $O(n^{-1/8})$.

 Moving beyond the Hilbert-valued $U$-statistic, we also study the large-dimensional regime where $X_i's, i = 1,2,\ldots,n$ are random vectors in $\R^{d_n}$, where we explicitly track the dependence between $d_n$ (hereafter denoted as $d$) and $n$. We show that under a uniform fourth-moment condition on the kernel coordinates, the Berry--Esseen type results analogous to \eqref{berry} are defined via the interplay between the Frobenius norm and the operator norm of the covariance matrices associated with the $U$-statistic.
 The approximation rates obtained here are similar to those of mean vectors obtained in \cite{fang2024large}. 
 

 In practice, the approximating Gaussian distribution $Z_n$ is rarely feasible as the covariance operator $\Gamma_n$ is unknown. In contrast to the sample mean case, the analytical estimation of $\Gamma_n$ is not straightforward as it is given by the covariance operator of the H\'ajek projection that is defined by the conditional expectation of the kernel. As such, bootstrapping is a particularly appealing alternative to inference for $U$-statistics. The bootstrap, introduced by \cite{efron1979bootstrap}, provides a data-driven
alternative when the covariance structure of the limiting Gaussian law is
unknown. Its first-order validity and asymptotic accuracy were studied
systematically by \cite{bickel1981some}, while
\cite{arcones1992bootstrap} developed bootstrap distributional limit theory for
$U$- and $V$-statistics under weak moment conditions. In high-dimensional
settings, non-asymptotic guarantees for empirical and multiplier bootstrap
approximations of sums of independent random vectors were established by
\cite{chernozhukov2017clt, chernozhuokov2022improved}; see also \cite{zhilova2020nonclassical} for
finite-sample bootstrap approximations over Euclidean balls and for smooth
functions of sums. For nondegenerate high-dimensional $U$-statistics,
\cite{chen2018gaussian, chen2019randomized} obtained corresponding empirical and Gaussian multiplier
bootstrap bounds over hyperrectangles.
 
 In this paper, we consider the following three bootstrap methods: (i) the jackknife multiplier bootstrap, (ii) the weighted bootstrap, and (iii) the empirical bootstrap. For these bootstraps, we will establish finite-sample stochastic upper bounds on 
 \[
d_{\mathrm{Kol}}^B(\|T_n^\star\|,\|Z_n\|):=\sup_{t \ge 0} |\Prob^\star(\|T_n^\star\| \le t) - \Prob(\|Z_n\| \le t)|,
 \]
where $\Prob^\star$ denotes the conditional probability given the sample $\bm X,$ and $T_n^\star$ is the bootstrap statistic constructed by any of the methods (i)--(iii).





\subsection{Our contributions} 
\begin{itemize}
    \item Our first main contribution is that we develop the Gaussian approximation of $U$-statistics under the setup where the kernel $\tilde h$ is allowed to take values in an abstract separable Hilbert space $\h$, and thus allows us to develop a flexible framework to handle both functional and high-dimensional data in a rather unified manner. We derived the Gaussian approximation result for independent nondegenerate $U$-statistics of order $r$ under the inner product-induced norm (e.g., Euclidean or $\ell_2$-norm in $\R^d$) and under relaxed moment conditions, as compared to sub-exponential tail or polynomial tail type assumptions in \cite{chen2018gaussian} where the results are derived using sup-norm or $\ell_{\infty}$ norm in $\R^d$. 
    The main challenge here is to obtain bounds for the non-linear terms or the remainder terms in a way that they turn out to be of smaller order as compared to the linear terms or the H\'ajek projection terms, which have an order similar to the linear statistic in \cite{fang2024large}. 
    
    \item The second major contribution of the paper is deriving the Berry--Esseen bounds for the bootstrap approximations under three widely used bootstrap methods for both Hilbert-valued and $d$-dimensional $U$-statistics. To the best of our knowledge, we are not aware of previous explicit non-asymptotic Berry--Esseen bounds for bootstrap approximation of the Hilbert norm of a nondegenerate $U$-statistic over centered balls
and thus our results are particularly appealing to inference problems in functional and high-dimensional data analysis. The bootstrapping guarantees derived here are quite non-trivial as compared to the ones in \cite{fang2024large}, as bounds are to be derived for covariance matrices for any general kernel $\tilde h$.
    \item Finally, our technical guarantees cover the case where the observational vectors are non-identically distributed, which allows us to accommodate heterogeneous data structures that arise in many practical applications \cite{alberink2001berry, han2018inference}. However, relaxing the identicalness in the assumption substantially increases the technical complexity of our analysis, particularly in the bootstrap arguments.
\end{itemize}  

The remainder of the paper is organized as follows. Section \ref{sec:notation} introduces the fundamental notation, and Section \ref{sec:prelim} reviews the preliminaries for $U$-statistics. Section \ref{sec:limit} establishes the non-asymptotic Gaussian approximation rates for both Hilbert spaces and high-dimensional Euclidean spaces. Section \ref{sec: boot} derives convergence rates for the three bootstrap procedures. Section \ref{sec: stat app} presents some applications of the proposed results to problems in statistical inference. Finally, we defer all technical proofs and auxiliary lemmas to the Appendices.

\subsection{Notation}\label{sec:notation}
We shall use the following notation and conventions in this paper.

For any two sequences  $a_{n} $ and $b_{n} $ of positive  numbers write $a_{n} \lesssim b_{n} $, if $a_{n} \le Cb_{n} $, for some  universal constant $C > 0$, and write $a_n = o(b_n)$, if $a_n/b_n \to 0$. Further, we write $a_{n} \asymp  b_{n} $ if $a_{n} \lesssim b_{n} $ and $b_{n} \lesssim a_{n} $.

Let $\{e_{j,n}\}_{j\geq 1}$ be a complete orthonormal basis of the
separable real Hilbert space $\h$. 
For a square-integrable random element $X$ in a separable Hilbert space $\h$ with mean $\mu=\E X$, its covariance operator is $\Gamma:=\E[(X-\mu)\otimes(X-\mu)]$, equivalently, $\Gamma u=\E[\langle X-\mu,u\rangle(X-\mu)]$ for $u\in\h$.
For $u,v\in\h$, define
the rank-one operator $u\otimes v:\h\to\h$ by
$
(u\otimes v)z=\langle v,z\rangle u,
\qquad z\in\h.
$
For a bounded linear operator $T:\h\to\h$, define
$
\|T\|_{\mathrm{op}}
=
\sup_{\|x\|=1}\|Tx\|.
$
Its adjoint $T^*:\h\to\h$ is the unique bounded linear
operator satisfying
$
\langle Tx,y\rangle=\langle x,T^*y\rangle,
\qquad x,y\in\h.
$
The operator $T$ is called Hilbert--Schmidt if
$
\sum_{j=1}^{\infty}\|Te_{j,n}\|^2<\infty.
$
For such an operator, its Hilbert--Schmidt norm is
$
\|T\|_\hs
=
\left(\sum_{j=1}^{\infty}\|Te_{j,n}\|^2\right)^{1/2}.
$
This definition is independent of the choice of the orthonormal basis.
For two Hilbert--Schmidt operators $T_1$ and $T_2$, their
Hilbert--Schmidt inner product is
$
\langle T_1,T_2\rangle_{\mathrm{HS}}
=
\sum_{j=1}^{\infty}\langle T_1e_{j,n},T_2e_{j,n}\rangle
=
\operatorname{tr}(T_2^*T_1).
$
For a covariance operator $\Sigma,$ we denote the $k$-th largest eigenvalue as $\lambda_{(k)}(\Sigma)$. Additionally, for $k=1,2$ we define $\Lambda^2_k(\Sigma) = \sum_{j=k}^\infty \lambda_{(j)}^2(\Sigma)$ and when $\Lambda_2(\Sigma)>0,$ we define $\aleph(\Sigma) =(\Lambda_1(\Sigma)\;\Lambda_2(\Sigma))^{-1/2}$.

For a positive integer $d$, $\R^d$ denotes the $d$-dimensional Euclidean space except that we write $\R^1$ as $\R$. For $x\in \R^d$, $x^\top$ denotes its transpose. For any two vectors $x = (x_1,\dots,x_d)^\top \in \R^{d} $ and $y = (y_1,\dots,y_d)^\top \in \R^{d} $, write $x \leq y$ if $x_j \le y_j $ for all $j=1,\dots, d.$ For any $x = (x_1,\dots,x_d)^\top \in \R^{d} $ and $a \in \R $, $x+a = (x_1+a,\dots,x_d+a)^\top$.  For any symmetric positive semi-definite matrix of order $d\times d,$ $A=(a_{ij})$, we denote the spectral norm as $\| A \|_\opr = \lambda_{(1)} (A)$ and the Frobenius norm as $\| A \|_\text{F} = (\sum_{i,j=1}^d a_{ij}^2)^{1/2}$. For $\alpha > 1$, we define the function $\psi_{\alpha}: [0, \infty) \to [0, \infty)$ by $\psi_{\alpha} (x) = \exp(x^{\alpha}) - 1$, and for real valued random variable $\xi$, we define the Orlicz norm as $\|\xi\|_{\psi_\alpha} = \inf\{\lambda > 0: \E[\psi_{\alpha}(|\xi|/\lambda)] \le 1\}.$

For indexing, we define $\inr := \{ (i_1, \dots, i_r): 1\leq i_1 \neq \cdots \neq i_r \leq n\},$ and for $1\leq j\leq n,$ $\inrj := \{ (i_1, \dots, i_{r-1}): 1\leq i_1 \neq \cdots \neq i_{r-1} \leq n,\  i_1, \dots, i_{r-1} \neq j) \}$, Note that, by $i_1\neq\cdots\neq i_r,$ we mean pairwise $i_1,\dots,i_r$ are not same. Define $\npr := n!/(n-r)! = |\inr|$. Further, we define $\mathcal C_n^r := \{ (i_1, \dots, i_r): 1\leq i_1, \dots,i_r \leq n\}$. For the brevity of notation, for any integers $0<N_1<N_2<N_3<N_4<r,$ $1\leq i_{N_1}<i_{N_2}<i_{N_3}<i_{N_4} \leq n$ and for any function $\mathcal{K}(.)$ defined on $\mathcal{X}^r,$ we write $\mathcal{K}(X_{i_{N_1}}, X_{i_{N_1+1}},\dots, X_{i_{N_2}}, X_{i_{N_3}}, X_{i_{N_3+1}},\dots, X_{i_{N_4}})$ as $\mathcal{K}(X_{i_{[N_1:N_2]}}, X_{i_{[N_3:N_4]}})$. 
Throughout the paper, $C>0$ is a constant independent of sample sizes $n$, and the value of $C$ varies from place to place.

\section{Nonasymptotic approximation results} \label{sec:main}
\subsection{Preliminaries}\label{sec:prelim}

We first review $U$-statistics and the H\'ajek projection, which is one
of the fundamental tools for analyzing $U$-statistics. Recall that we
have assumed that the kernel $\widetilde h$ is symmetric; that is,
$
\widetilde h(x_{\sigma(1)},\ldots,x_{\sigma(r)})
=
\widetilde h(x_1,\ldots,x_r)
$
for every permutation $\sigma$ of $\{1,\ldots,r\}$. Consequently, the
$U$-statistic can also be written as
\begin{equation}\label{defU}
U_n
=
\frac{1}{\npr}
\sum_{\iinr}
\widetilde h(\xir).
\end{equation}
For the sake of simplicity, the symmetry of the kernel $\widetilde h$ has been assumed here. 

To accommodate the non-identical distributions of the observations,
for every $\bm i = (i_1,\dots$ $,i_r) \in \inr$, define
\begin{equation*}
\theta_{\bm i}
:=
\E\!\left[
\widetilde h(\xir)
\right],
\end{equation*}
and the tuple-specific centered kernel
\begin{equation*}
h_{\bm i}(x_1,\ldots,x_r)
:=
\widetilde h(x_1,\ldots,x_r)-\theta_{\bm i}.
\end{equation*}
Then
\begin{equation*}
\E\!\left[
h_{\bm i}(\xir)
\right]
=0.
\end{equation*}
Recall that
\begin{equation*}
\theta
=
\frac{1}{\npr}
\sum_{\iinr}
\theta_{\bm i}.
\end{equation*}
Define the centered $U$-statistic by
\begin{equation*}
U_n^\circ
:=
U_n-\theta
=
\frac{1}{\npr}
\sum_{\iinr}
h_{\bm i}(\xir).
\end{equation*}
Thus, $\E U_n^\circ=0$, and we define
$
T_n:=\sqrt n U_n^\circ/r.
$

For $r \ge 2$, $\bm i=(i_1,\ldots,i_r)\in\mathcal I_n^r$ and
$c\in\{1,\ldots,r\}$, define the $c$-th tuple-specific first-order
projection by
\begin{equation*}
\vartheta_{\bm i,c}(X_{i_c})
:=
\E\!\left[
h_{\bm i}(\xir)
\,\middle|\,
X_{i_c}
\right]
=
\E\!\left[
\widetilde h(\xir)
\,\middle|\,
X_{i_c}
\right]
-\theta_{\bm i}.
\end{equation*}
It follows from the tower property that
\begin{equation}\label{eq:psi-centered}
\E\!\left[
\vartheta_{\bm i,c}(X_{i_c})
\right]
=0.
\end{equation}
By symmetry, the H\'ajek projection of $U_n^\circ$ is
$rL_n/\sqrt n$, where
\begin{equation}\label{lin}
L_n
:=
\frac{1}{\sqrt n}
\sni
g^{(i_1)}(X_{i_1}),
\end{equation}
with
\begin{equation*}
g^{(i_1)}(X_{i_1})
:=
\frac{1}{\nopr}
\sum_{\iinri}
\vartheta_{\bm i,1}(X_{i_1}),
\qquad
i_1 = 1,\dots,n.
\end{equation*}
Here, the inner summation is over all
$(i_2,\ldots,i_r)\in\mathcal I_{n,-i_1}^{r-1}$.

Since the observations are independent and
$\E[g^{(i_1)}(X_{i_1})]=0$, the covariance operator of $L_n$ is
\begin{equation}\label{VarLn}
\Gamma_n
:=
\operatorname{Cov}(L_n)
=
\frac{1}{n}
\sni
\E\!\left[
g^{(i_1)}(X_{i_1})
\otimes
g^{(i_1)}(X_{i_1})
\right].
\end{equation}
For $\bm i=(i_1,\ldots,i_r)\in\mathcal I_n^r$, define the
index-dependent remainder kernel by
\begin{equation*}
f^{\bm i}(\xir)
:=
h_{\bm i}(\xir)
-
\sum_{c=1}^r
\vartheta_{\bm i,c}(X_{i_c}).
\end{equation*}
Then the remainder $U_n^\circ-rL_n/\sqrt n$ can be expressed as
$rR_n/\sqrt n$, where
\begin{equation}\label{rem}
R_n
:=
\frac{\sqrt n}{rP_{n,r}}
\sum_{\iinr}
f^{\bm i}(\xir).
\end{equation}

Thus,
\begin{equation*}
\frac{rR_n}{\sqrt n}
=
\frac{1}{P_{n,r}}
\sum_{\iinr}
f^{\bm i}(\xir).
\end{equation*}
This is a generalized $U$-statistic with index-dependent kernels. These
kernels are first-order degenerate in the sense that
\begin{equation*}
\E\!\left[
f^{\bm i}(X_{i_{[1:r]}})
\,\middle|\,
X_{i_c}
\right]
=0,
\qquad
c=1,\ldots,r,\quad
\bm i \in\mathcal I_n^r.
\end{equation*}
Indeed, for $d\neq c$, independence and
\eqref{eq:psi-centered} imply
$
\E[\vartheta_{\bm i,d}(X_{i_d})\mid X_{i_c}]=0.
$ We denote the covariance operator of $R_n$ as $\Gamma_{n, R} := \operatorname{Cov}(R_n).$

Consequently, the normalized centered $U$-statistic admits the exact
decomposition
\begin{equation*}
T_n
=
\frac{\sqrt n}{r}U_n^\circ
=
L_n+R_n.
\end{equation*}
We present the Gaussian approximation result for $\|T_n\|$ in
Section~\ref{sec:limit}.

\subsection{Gaussian approximation}
\label{sec:limit}
In this section, we develop a non-asymptotic Gaussian approximation for the norm of a normalized, nondegenerate $U$-statistic. Our principal result, Theorem~\ref{thm: GA}, is formulated in a triangular-array framework in which the kernel may take values in a separable Hilbert space $\mathcal{H}_n$ that is allowed to vary with $n$. Consequently, the ambient space, the covariance operator, its spectrum, and the relevant moment quantities may all depend on the sample size. The theorem should therefore be viewed as a key approximation result for this article which for each $n$, provides an explicit bound for the Kolmogorov distance between the distribution of $\|T_n\|$ and that of $\|Z_n\|$, where $Z_n$ is a centered Gaussian element in $\mathcal{H}_n$ having the same covariance operator as the H\'ajek projection of $T_n$. Equivalently, the theorem quantifies the accuracy of the Gaussian approximation over the class of centered balls in $\mathcal{H}_n$.

The structure of the bound essentially reflects the Hoeffding decomposition $T_n=L_n+R_n,$
where $L_n$ is the first-order H\'ajek projection and $R_n$ collects the higher-order degenerate components. The resulting error bound separates the contribution of the linear projection from that of the nonlinear remainder and records explicitly how the approximation depends on the moment behavior of the kernel and the spectral geometry of the covariance operator. 

Before stating the main result, we introduce the following moment and spectral quantities:
\begin{align*}
   m_n  = \underset{\iicr}{\max} \Exp\|\widetilde h(\xir)\|^4,
   \text{ and }  \ell_n = 1-\frac{\|\Ga\|^2_\opr}{\|\Ga\|^2_\hs},\\
\end{align*}

whenever $\|\Ga\|_{HS} > 0$. 



\begin{theorem} \label{thm: GA}
    Assume $m_n< \infty$ and $\ell_n>0$, then we have
    \begin{equation*}
    d_{\mathrm{Kol}}(\|T_n\|,\|Z_n\|) \lesssim \Delta_n,
\end{equation*}
where 
\[
\begin{aligned}
\Delta_n & := \ell_n^{-3/16}\mathcal A_{1,n} + \ell_n^{-1/6}\mathcal A_{2,n} + \ell_n^{-1/8}\mathcal A_{3,n},\\
\mathcal A_{1,n}
&:=
\frac{1}{\|\Ga\|_\hs^{3/4}}
\left\{
\frac{\|\Ga\|_\hs\; m_n}{n}
+
\frac{\|\Ga\|_{\mathrm{op}}^{3/2}m_n^{3/4}}{\sqrt n}
\right\}^{1/4},\\
\mathcal A_{2,n}
&:=
\frac{1}{\|\Ga\|_\hs^{2/3}}
\left\{
\frac{\|\Ga\|_{\mathrm{op}}^{1/2}m_n^{3/4}}{\sqrt n}
+
\frac{m_n}{n}
\right\}^{1/3},\\
\mathcal A_{3,n}
&:=
\frac{1}{\|\Ga\|_\hs^{1/2}}\left\{ \E(\tr(\Gamma_{n,R}) + \sqrt{\E(\tr(\Gamma_{n,R})\E(\tr(\Ga)}) \right\}^{1/2}.
\end{aligned}
\]
\end{theorem}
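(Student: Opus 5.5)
We will use the Hoeffding decomposition $T_n=L_n+R_n$. The plan is to treat the linear part $L_n$ with a Hilbert-space Stein argument and to absorb $R_n$ through a smoothing/perturbation step. The exponents of $\ell_n$ in $\Delta_n$ suggest three different smoothing levels, one for each $\mathcal A_{j,n}$.

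\emph{Linear part.} Fix $a\ge0$ and a bandwidth $\varepsilon>0$. We sandwich $\ind\{\|x\|\le a\}$ between smooth functions $\phi_{a,\varepsilon}(\|x\|^2)$ whose derivatives satisfy $\|\phi'\|_\infty\lesssim\varepsilon^{-1}$, $\|\phi''\|_\infty\lesssim\varepsilon^{-2}$ and $\|\phi'''\|_\infty\lesssim\varepsilon^{-3}$.

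We then apply Stein's method for $N_{\h}(0,\Gamma_n)$ via the Ornstein--Uhlenbeck semigroup. For the sum of independent, centered, non-identically distributed summands $n^{-1/2}g^{(i)}(X_i)$ we use leave-one-out Taylor expansions. The resulting error involves third and fourth derivatives of the Stein solution in directions $g^{(i)}$.

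The needed moments are controlled through the kernel. By Jensen, $\E\|g^{(i)}\|^4\le \max\E\|\widetilde h\|^4\lesssim m_n$. Moreover $\E\langle g,u\rangle^2\le\|\Gamma_n\|_{\opr}\|u\|^2$ appears when contracting against Hessian directions, and this produces the mixed terms $\|\Gamma_n\|_{\opr}^{3/2}m_n^{3/4}/\sqrt n$ and $\|\Gamma_n\|_{\hs}m_n/n$. The indices range over $\mathcal C_n^r$ in $m_n$ because the bootstrap will need the $V$-statistic cells, but only $\mathcal I_n^r$ is used here.

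\emph{Anti-concentration.} To pass from smooth to indicator functions we need a bound on $\sup_a\Prob(a\le\|Z_n\|^2\le a+\delta)$. Write $\|Z_n\|^2=\sum_k\lambda_k\eta_k^2$. Its density is bounded by $C(\sum_{k\ge2}\lambda_k^2)^{-1/2}$, because after conditioning on the leading coordinate the remaining chi-square mixture has density $\lesssim\Lambda_2^{-1}$. In normalized form this reads
\[
\Prob(a\le\|Z_n\|^2\le a+\delta)\lesssim \delta\,\Lambda_2(\Gamma_n)^{-1}=\delta\,\|\Gamma_n\|_{\hs}^{-1}\ell_n^{-1/2},
\]
using $\Lambda_2^2=\|\Gamma_n\|_{\hs}^2\ell_n$. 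Optimizing $\varepsilon$ in the form $\delta^{-3}\times(\text{third-order term})+\delta\,\ell_n^{-1/2}/\|\Gamma_n\|_{\hs}$ gives exponent $1/4$ and a power of $\ell_n$ that is a fraction of $1/2$. The choice $\ell_n^{-3/16}$ comes from an interpolation that we expect to fall out from balancing with the HS scaling, producing $\ell_n^{-3/16}\mathcal A_{1,n}$. A second-order (Lindeberg-type) smoothing with $\delta^{-2}$ gives the $1/3$-power term $\ell_n^{-1/6}\mathcal A_{2,n}$. We keep whichever route each term requires.

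\emph{Remainder.} For the remainder we use
\[
\Prob(\|T_n\|\le a)\le\Prob(\|L_n\|\le a+t)+\Prob(\|R_n\|>t)\quad\text{or}\quad \big|\|T_n\|^2-\|L_n\|^2\big|\le\|R_n\|^2+2\|L_n\|\|R_n\|.
\]
Applying Markov's inequality to $\|R_n\|^2+2\|L_n\|\|R_n\|$, with $\E\|R_n\|^2=\tr\Gamma_{n,R}$ and Cauchy--Schwarz $\E\|L_n\|\|R_n\|\le(\tr\Gamma_{n,R}\,\tr\Gamma_n)^{1/2}$, bounds the probability of a perturbation of $\|\cdot\|^2$ exceeding $\delta$ by $\delta^{-1}\{\tr\Gamma_{n,R}+\sqrt{\tr\Gamma_{n,R}\tr\Gamma_n}\}$. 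Adding the anti-concentration cost $\delta\|\Gamma_n\|_{\hs}^{-1}\ell_n^{-1/2}$ for the Gaussian after the linear-part comparison, and optimizing over $\delta$, yields $\ell_n^{-1/4}$ times a square root. The stated $\ell_n^{-1/8}$ suggests the anti-concentration must instead be taken at the level of $\|x\|$ (not $\|x\|^2$) via a square-root change of variables; we will check which normalization gives the stated power.

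\emph{Main obstacle.} The main obstacle is the Stein step in infinite dimension. We need dimension-free bounds on derivatives of the Stein solution for $\phi(\|x\|^2)$ that involve only $\|\Gamma_n\|_{\opr}$ and $\|\Gamma_n\|_{\hs}$ and never the trace. We would try the approach of Fang--Koike-type arguments for balls: write the third derivative contraction $\E\langle D^3f,g\otimes g\otimes g\rangle$ explicitly for radial test functions, so that the terms reduce to $\langle x,g\rangle^k\|g\|^{2j}$. These are then bounded by Hölder's inequality using $m_n$ together with $\E\langle Z,g\rangle^2\le\|\Gamma_n\|_{\opr}\|g\|^2$.
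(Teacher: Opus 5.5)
Your overall architecture matches the paper's. You smooth through functions of $\|x\|^2$, use an Ornstein--Uhlenbeck Stein bound of Fang--Koike type for $L_n$ (the paper's Lemma~\ref{lem: stein}, giving $\tau^{-3}\psi_1+\tau^{-2}\psi_2$), and pay a remainder cost $\tau^{-1}\{\E\|R_n\|^2+(\E\|R_n\|^2\E\|L_n\|^2)^{1/2}\}$. Your Markov argument via $|\|T_n\|^2-\|L_n\|^2|\le\|R_n\|^2+2\|L_n\|\|R_n\|$ is an equivalent substitute for the paper's Lipschitz bound in Lemma~\ref{lem:u_function_results}(iv).

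The genuine gap is the anti-concentration step, which is the only place $\ell_n$ enters. Your bound is $\kappa\tau$ with $\kappa=\Lambda_2(\Ga)^{-1}=\|\Ga\|_\hs^{-1}\ell_n^{-1/2}$. Balancing $\tau^{-3}\psi_1+\tau^{-2}\psi_2+\tau^{-1}\psi_3+\kappa\tau$ then gives $\kappa^{3/4}\psi_1^{1/4}+\kappa^{2/3}\psi_2^{1/3}+\kappa^{1/2}\psi_3^{1/2}$. These are factors $\ell_n^{-3/8},\ell_n^{-1/3},\ell_n^{-1/4}$, exactly the squares of the stated ones, hence strictly weaker whenever $\ell_n<1$. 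Neither an ``interpolation'' nor anti-concentration at the level of $\|x\|$ repairs this. The latter merely replaces an interval of length $\delta$ in $\|x\|^2$ by one of length $2a\delta+\delta^2$, with a radius $a$ that is not uniformly bounded.

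Your justification of even the weaker bound also fails. That bound is true, but only because it follows from the sharper estimate below via $\Lambda_2\le\Lambda_1$. Conditionally on $\eta_1$, the sum $\sum_{k\ge2}\lambda_k\eta_k^2$ need not have density $\lesssim\Lambda_2^{-1}$. If $\Ga$ has rank two it equals $\lambda_{(2)}\eta_2^2$, whose density is unbounded at $0$. Conditioning instead on all but the top two coordinates gives only $(\lambda_{(1)}\lambda_{(2)})^{-1/2}$, which $\ell_n$ does not control. For example, take one unit eigenvalue followed by $N$ eigenvalues equal to $N^{-1/2}$: then $\ell_n=1/2$, but $(\lambda_{(1)}\lambda_{(2)})^{-1/2}=N^{1/4}$.

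What is needed is the nontrivial small-ball estimate of Götze, Naumov, Spokoiny and Ulyanov (Theorem~2.7 in \cite{gotze2019large}, the paper's Lemma~\ref{lem: AC}):
\[
\sup_{\eta\ge0}\Prob(\eta<\|Z_n\|^2\le\eta+\tau)\lesssim\tau\,\bigl(\Lambda_1(\Ga)\Lambda_2(\Ga)\bigr)^{-1/2}.
\]
This holds once $\Ga$ has two nonzero eigenvalues, which $\ell_n>0$ guarantees. Since $\Lambda_1=\|\Ga\|_\hs$ and $\Lambda_2=\|\Ga\|_\hs\ell_n^{1/2}$, the constant is $\aleph(\Ga)=\|\Ga\|_\hs^{-1}\ell_n^{-1/4}$. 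Take a single $\tau$ equal to the sum of the three balancing values, and bound $n^{-1}\sum_{i}\E\|g^{(i)}(X_i)\|^k\lesssim m_n^{k/4}$ for $k=3,4$. Then $\aleph^{3/4},\aleph^{2/3},\aleph^{1/2}$ produce exactly $\ell_n^{-3/16}\mathcal A_{1,n}+\ell_n^{-1/6}\mathcal A_{2,n}+\ell_n^{-1/8}\mathcal A_{3,n}$.
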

The quantity $m_n$ measures the fourth moment magnitude of the kernel, which essentially controls both the H\'ajek projection and the higher-order terms. The quantity $m_n$ is defined over $\iicr$ as it provides the von-Mises moment analogue required only for empirical bootstrap later in Section \ref{sec: boot}. In the triangular-array setting, $m_n$ is not required to remain bounded uniformly in $n$; its dependence on $n$ is retained explicitly in the bound. The quantity $\ell_n$ describes the covariance-induced spectral geometry relevant to the Gaussian approximation. Since
$
\|\Gamma_n\|_{\mathrm{op}}=\lambda_{(1)}(\Gamma_n)
\qquad\text{and}\qquad
\|\Gamma_n\|_\hs^2
=
\sum_{j\geq 1}\lambda_{(j)}^2(\Gamma_n),
$
we have
$
\ell_n
=
\displaystyle\sum_{j\geq 2}\lambda_{(j)}^2(\Gamma_n)/
     \displaystyle\sum_{j\geq 1}\lambda_{(j)}^2(\Gamma_n).
$
Therefore, $\ell_n$ is the proportion of the squared spectral mass of $\Gamma_n$ lying outside its leading eigendirection. Equivalently, if
$
r_{\mathrm{st}}(\Gamma_n)
:=
\|\Gamma_n\|_\hs^2/
     \|\Gamma_n\|_{\mathrm{op}}^2
$
denotes the stable rank of $\Gamma_n$, then
$
\ell_n
=
1- (r_{\mathrm{st}}(\Gamma_n))^{-1}.
$
Thus, $\ell_n$ measures spectral-dispersion proportion related to stable rank rather than the algebraic dimension of $\mathcal{H}_n$. In particular,
if $\Gamma_n$ has rank one, then $\ell_n=0$, whereas $\ell_n>0$ whenever at least two eigenvalues of $\Gamma_n$ are strictly positive. If $\ell_n$ is small, then most of the squared spectral mass is concentrated in a single direction, and the Gaussian distribution is nearly supported on a one-dimensional subspace. If $\ell_n$ is bounded away from zero, then a nonnegligible proportion of the covariance is distributed over directions orthogonal to the leading eigenvector. The negative powers of $\ell_n$ appearing in Theorem~\ref{thm: GA} quantify the deterioration of the anti-concentration bound as the Gaussian distribution approaches a rank-one configuration.
$\ell_n$ must describe the geometry induced by $\Gamma_n$, rather than the geometry of the ambient Hilbert space alone. Directions belonging to the null space of $\Gamma_n$ do not affect the distribution of $Z_n$ and therefore play no role in the approximation of $\|T_n\|$. Consequently, the ambient space may be infinite-dimensional and $\Gamma_n$ may be singular, provided that its nonzero spectrum has sufficient dispersion beyond the leading eigendirection. This feature allows Theorem~\ref{thm: GA} to accommodate both infinite-dimensional functional data and high-dimensional vectors with approximately low-rank covariance structures.

Theorem~\ref{thm: GA} has two immediate consequences in Proposition~\ref{prop: HGA} and Proposition \ref{prop: GA}.
\begin{proposition}  \label{prop: HGA}
    For a given constant $C > 1,$ independent of $n$, assume the following conditions hold: 
\begin{enumerate}[label={(A.\arabic*)}]
    \item\label{A1} $
        \underset{\iicr}{\max} \Exp\|\widetilde h(\xir)\|^4 < C.
   $
    \item\label{A2} For the covariance operator $\Ga$
    \begin{equation*}
      \lambda_{(2)}(\Ga) > 1/C.
    \end{equation*}
\end{enumerate}
Then we have
    \begin{equation*}
    d_{\mathrm{Kol}}(\|T_n\|,\|Z_n\|) \lesssim n^{-1/8}.
\end{equation*}
\end{proposition}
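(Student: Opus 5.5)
The plan is to apply Theorem~\ref{thm: GA} directly. Under \ref{A1}--\ref{A2}, I would show that every ingredient of $\Delta_n$ is bounded above or below by constants depending only on $C$ and $r$. Then $\mathcal A_{1,n}\lesssim n^{-1/8}$, $\mathcal A_{2,n}\lesssim n^{-1/6}$ and $\mathcal A_{3,n}\lesssim n^{-1/4}$, with $\ell_n$ bounded away from zero, so the dominant term is of order $n^{-1/8}$.

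\textbf{Step 1 (spectral quantities).} Since $m_n<C$, Jensen's inequality for conditional expectations gives
\[
\E\|\vartheta_{\bm i,1}(X_{i_1})\|^2\le \E\|h_{\bm i}\|^2\le 4\,\E\|\widetilde h\|^2\le 4\sqrt{m_n}.
\]
Averaging and using Minkowski's inequality gives $\E\|g^{(i_1)}\|^2\lesssim \sqrt{m_n}$. Hence
\[
\|\Ga\|_{\mathrm{op}}\le\|\Ga\|_\hs\le \tr(\Ga)=\frac1n\sni\E\|g^{(i_1)}\|^2\lesssim C^{1/2}.
\]
Conversely, \ref{A2} gives $\|\Ga\|_\hs\ge\lambda_{(2)}(\Ga)>1/C$. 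Moreover
\[
\ell_n=\frac{\sum_{j\ge2}\lambda_{(j)}^2(\Ga)}{\|\Ga\|_\hs^2}\ge \frac{\lambda_{(2)}^2(\Ga)}{\tr(\Ga)^2}\gtrsim C^{-3},
\]
which is a positive constant. So all negative powers of $\ell_n$ and of $\|\Ga\|_\hs$ in $\Delta_n$ are $O(1)$, and $\|\Ga\|_{\mathrm{op}}$ is $O(1)$. Plugging in yields $\mathcal A_{1,n}\lesssim (n^{-1}+n^{-1/2})^{1/4}\lesssim n^{-1/8}$ and $\mathcal A_{2,n}\lesssim(n^{-1/2}+n^{-1})^{1/3}\lesssim n^{-1/6}$.

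\textbf{Step 2 (the remainder).} This is the main point. I need $\tr(\Gamma_{n,R})=\E\|R_n\|^2\lesssim 1/n$; note that $\tr(\Gamma_{n,R})$ is deterministic, so the outer $\E$ in $\mathcal A_{3,n}$ is harmless. Expand
\[
\E\|R_n\|^2=\frac{n}{r^2P_{n,r}^2}\sum_{\bm i,\bm j\in\inr}\E\langle f^{\bm i}(X_{\bm i}),f^{\bm j}(X_{\bm j})\rangle .
\]
I claim the cross term vanishes whenever $\bm i$ and $\bm j$ share at most one index.
\begin{itemize}
\item If they share no index, the term vanishes by independence and $\E f^{\bm i}=0$.
\item If they share exactly one index $k$, condition on $X_k$. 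The two factors are then conditionally independent, and each has conditional mean $\E[f^{\bm i}\mid X_k]=0$ by the first-order degeneracy noted in Section~\ref{sec:prelim}. This argument works despite the non-identical distributions, because degeneracy was established tuple-wise.
\end{itemize}
The number of pairs sharing at least two indices is $O(n^{2r-2})$. For each such pair, Cauchy--Schwarz and Minkowski give
\[
\bigl|\E\langle f^{\bm i},f^{\bm j}\rangle\bigr|\le \max_{\bm i}\E\|f^{\bm i}\|^2\lesssim (r+1)^2\sqrt{m_n}.
\]
Since $P_{n,r}^2\asymp n^{2r}$, we get $\E\|R_n\|^2\lesssim n\cdot n^{2r-2}/n^{2r}=n^{-1}$.

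\textbf{Step 3 (conclusion).} Using $\tr(\Ga)=O(1)$, Step 2 gives $\mathcal A_{3,n}\lesssim (n^{-1}+n^{-1/2})^{1/2}\lesssim n^{-1/4}$. Combining with Step 1, $\Delta_n\lesssim n^{-1/8}+n^{-1/6}+n^{-1/4}\lesssim n^{-1/8}$, and the result follows from Theorem~\ref{thm: GA}.

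I expect the only delicate step to be the orthogonality and counting argument in Step 2. Everything else is direct substitution into Theorem~\ref{thm: GA}.
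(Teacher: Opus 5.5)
Your proposal is correct and follows essentially the same route as the paper, which plugs into the bound \eqref{interim1} behind Theorem~\ref{thm: GA} and uses \ref{A1}--\ref{A2} to keep $\|\Ga\|_\hs$, $\|\Ga\|_{\mathrm{op}}$, $\tr(\Ga)$ and $\ell_n$ of constant order. The paper obtains $\E\|R_n\|^2\lesssim 1/n$ from Lemma~\ref{lem: boundlrHilb1}, whose proof is exactly your argument: pairs of tuples sharing at most one index contribute zero (independence and first-order degeneracy), and the $O(n^{2r-c})$ pairs sharing $c\ge 2$ indices are handled by Cauchy--Schwarz.
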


Proposition~\ref{prop: HGA} provides a Gaussian approximation for $U$-statistics in a fixed separable Hilbert space, that is, $\mathcal{H}_n=\mathcal{H}$ for every $n$, but the distributions, kernels, and covariance operators may still depend on $n$. Under uniform fourth-moment and mild spectral conditions, it yields a dimension-free approximation rate. 
Condition \ref{A1} is mild and assumes that the kernel $\tilde h$ of the $U$-statistic has bounded fourth moments. This condition, along with the Cauchy-Schwarz inequality, also implies that $\trc(\Gamma_n)$ and $ \| \Ga \|_{HS}$ are also bounded above. For ease of understanding of the Conditions \ref{A1} and \ref{A2}, we consider certain cases with a linear kernel $\tilde h$. Condition \ref{A1} is satisfied by several Gaussian processes on $[0,1]$, which include the Brownian bridge and Ornstein-Uhlenbeck processes. As an example, one can consider $X(t) = \sum_{k=1}^{\infty} \sqrt{\lambda_k} Z_k \phi_{k}(t)$, where $Z_{k}'s$ are independent random variables with uniformly bounded fourth moments, for $k \ge 1,$ $\lambda_k = 1/(k^2\pi^2)$ and $\phi_{k}(t) = \sqrt{2}  \sin(k\pi t)$ be an orthonormal basis of $L_2[0,1]$. Condition \ref{A2} implies that at least two eigenvalues of $\Gamma_n$ are bounded away from zero. In particular, Condition \ref{A2} allows the covariance operator $\Gamma_n$ to be singular and some coordinates $\tilde h$ to be nearly or fully degenerate, which includes the possibility of $\tilde h$ concentrating in a subspace of $\h$. 
Overall, it can be said that both Conditions \ref{A1} and \ref{A2} are quite relaxed and encompass many generative processes in infinite-dimensional Hilbert spaces. Some existing literature on functional data analysis requires all eigenvalues of the covariance operator to be strictly positive, see \cite{panaretos2010second, chakraborty2015wilcoxon}, and is thus stronger than our \ref{A2}.  For more discussions on the decaying structure of eigenvalues, see \cite{ritter2000average, lopes2025improved}, and references therein.


Next, we discuss the accuracy of the Gaussian approximation established in Proposition~\ref{prop: HGA}. For the interest of space, we do not discuss all the articles that studied the problem of Gaussian approximation for Hilbert-valued $U$-statistics; instead, we only focus on the ones that obtained an $n^{-1/2}$ rate, see \cite{korolyuk2013theory} for a detailed discussion on the approximation rates. For ease of exposition, consider i.i.d.\ Hilbert-valued random elements \(X_1, X_2, \ldots, X_n\) and the corresponding $U$-statistics in \eqref{defU}. Under assumptions such as 
\(\lambda_{(j)}(\Gamma_n) > 0\) for all \(j \ge 1\), and a finite third moment condition \(\mathbb{E}\|h(X_1, X_2, \ldots, X_r)\|^3 < \infty\), \cite{korolyuk1990rate} obtained a Gaussian approximation with accuracy of order $n^{-1/2}.$ 
Later, \cite{borovskikh1997normal} derived the same rate while only weakening the requirement of infinitely many strictly positive eigenvalues of \(\Gamma\), showing instead that the result holds with only nine positive eigenvalues. On the contrary, our Conditions \ref{A1}--\ref{A2} require only the two largest eigenvalues of $\Ga$ to be positive.  
The proof techniques in \cite{borovskikh1997normal} involve Fourier inversion and a characteristic function smoothing argument tailored to events in $\h$ similar to that in \cite{petrov2012sums}. 
The proof techniques developed in this article also allow us to derive the Gaussian and bootstrap approximation results for $U$-statistics in $\R^d$ under much more relaxed assumptions, beyond those that can already be accommodated by the existing literature on Hilbert-valued $U$ statistics. For high-dimensional real-valued vectors, the result of \cite{borovskikh1997normal} is weaker than our result, in the sense that besides requiring nine positive eigenvalues, the dependence on dimension is $d = o(n^{1/10}),$ whereas our result can incorporate much larger $d$; see Remark \ref{rem: RdGA} for more details. From a comparative standpoint, our weaker convergence rates for Hilbert-valued $U$-statistics come at the cost of weaker assumptions and thus widen the scope of statistical applications. It remains open to see whether, under similar assumptions \ref{A1}--\ref{A2}, the optimal rate of $ n^{-1/2}$ can be achieved, and we leave that for future research. 


Proposition~\ref{prop: HGA} also contains a projective
high-dimensional consequence under its global Hilbert-space fourth-moment
condition. Let $(P_d)_{d\geq 1}$ be a sequence of finite-rank orthogonal
projections converging strongly to the identity operator ${\rm{I}}_d$, and define
$
    T_{n,d}=P_dT_n,\qquad
    \Gamma_{n,d}=P_d\Ga P_d,\qquad
    Z_{n,d}=P_dZ_n,
    \qquad
    \rho_{n,d}=\tr\{({\rm{I}}_d-P_d)\Ga\},
$
where $Z_n\sim N_{\h}(0,\Ga)$. Suppose that
$
    b_{n,d}=\lambda_{(2)}(\Ga)>0
    \qquad\text{and}\qquad
    \rho_{n,d} \le b_{n,d}^2/(16 \|\Ga\|_{\mathrm{op}}).
$
Then Proposition~\ref{prop: HGA}, applied to the projected kernel
$P_dh$, gives
$
    d_{\mathrm{Kol}}\bigl(\|T_{n,d}\|,\|Z_{n,d}\|\bigr)
    \lesssim n^{-1/8},
$
uniformly over all such $d$. Moreover, comparison of the projected and
full Gaussian limits gives
$
    d_{\mathrm{Kol}}\bigl(\|T_{n,d}\|,\|Z_{n,d}\|\bigr)
    \lesssim
    n^{-1/8}+\left(\rho_{n,d}/b_{n,d}\right)^{1/2}.
$
Consequently, no restriction relating the projection dimension $d$ to
the sample size $n$ is required when the Gaussian target is $Z_{n,d}$.
In particular, $d=d_n$ may increase arbitrarily fast relative to $n$.
This is a fixed-energy high-dimensional regime: since
$
    \tr(\Gamma_{n,d})\leq \tr(\Ga)<\infty,
$
both $\|\Gamma_{n,d}\|_{\mathrm{op}}$ and $\|\Gamma_{n,d}\|_\hs$ remain
uniformly bounded. A canonical example is
$
    \Ga=\operatorname{diag}
    \left(1,1,3^{-2},4^{-2},\ldots\right).
$
For the corresponding spectral projections,
$\rho_{n,d}=\sum_{j>d}j^{-2}\asymp d^{-1}$, and hence
$
    d_{\mathrm{Kol}}\bigl(\|T_{n,d}\|,\|Z_{n,d}\|\bigr)
    \lesssim n^{-1/8}+d^{-1/2}.
$ 
Thus relative to the dimension-d Gaussian target $Z_{n,d}$, the approximation rate is $n^{-1/8}$ with no restriction on $d$ and $n$, but relative to the full Gaussian target $Z_n$, the rate $n^{-1/8}$ is achievable when $d \gtrsim n^{1/4}$.

In the following proposition, we present the Gaussian approximation result for $U$-statistics in $\R^d$. Here the abstract moment and operator quantities appearing in Theorem~\ref{thm: GA} are converted into explicit rates involving $n$, $d_n$, through the growth of the Frobenius and operator norms of the covariance matrix.

\begin{proposition}  \label{prop: GA}
For a measurable and symmetric kernel $\widetilde h = (\widetilde h_1,\dots,\widetilde h_d)^\top: \mathcal{X}^r  \to \R^d $ of order $r$, we consider the $U$-statistics 
in \eqref{defU} and the corresponding covariance matrix  $\Ga  = \frac{1}{n} \sni \Exp \big[ g^{(i_1)}(X_{i_1}) g^{(i_1)}(X_{i_1})^\top \big]$. Suppose, there exist constants $0\leq q_1\leq q_2\le 1,$ independent of $n$ such that 
\(\| \Ga \|_\text{op} \asymp d^{q_1}, \text{ and }\| \Ga \|_\text{F} \asymp d^{q_2}.\)
    
    For a given constant $C > 1$  (independent of n), assume the following conditions hold: 
\begin{enumerate}[label={(B.\arabic*)}]
    \item\label{B1} $\underset{1\leq m\leq d}{\max}\; \underset{\iicr}{\max} \Exp|\widetilde h_m(\xir)|^4 < C.$
    \item\label{B2} For the covariance matrix $\Ga$
    \begin{equation*}
      \lambda_{(2)}(\Ga) > 1/C.
    \end{equation*}
\end{enumerate}
Under Conditions \ref{B1}--\ref{B2}, we have
    \begin{equation*}
    d_{\mathrm{Kol}}(\|T_n\|,\|Z_n\|) \lesssim \Delta_{n,d},
\end{equation*}
where
\begin{equation} \label{delta_nr}
    \Delta_{n,d} = \max\Bigg\{ \Bigg(\frac{d^{2-2q_2}}{n \ell_n^{3/4}}\Bigg)^{1/4}, \Bigg(\frac{d^{3+3q_1-6q_2}}{n \ell_n^{3/2}}\Bigg)^{1/8}, \Bigg(\frac{d^{3+q_1-4q_2}}{n \ell_n}\Bigg)^{1/6}\Bigg\}.
\end{equation}
\end{proposition}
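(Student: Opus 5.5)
The plan is to apply Theorem~\ref{thm: GA} with $\h=\R^d$ and convert each of its quantities into a power of $d$ and $n$. Condition \ref{B2} gives $\lambda_{(2)}(\Ga)>0$, so $\ell_n>0$, and the standing hypotheses give $\|\Ga\|_\hs=\|\Ga\|_{\mathrm F}\asymp d^{q_2}$ and $\|\Ga\|_\opr\asymp d^{q_1}$. For the moment quantity I would use $\|\widetilde h\|^4=(\sum_{m}\widetilde h_m^2)^2\le d\sum_m \widetilde h_m^4$, which together with \ref{B1} yields $m_n\le C d^2$. The same argument gives $\E\|\widetilde h\|^2\lesssim d$ and $\tr(\Ga)\lesssim d$; for the latter, the projections $g^{(i_1)}$ are averages of conditional expectations, so Jensen's inequality applies. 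Throughout I may assume $(d^{2-2q_2}/n\ell_n^{3/4})^{1/4}\le 1$, since otherwise the claim is trivial.

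Next I substitute these into $\mathcal A_{1,n}$. The term $\|\Ga\|_\hs m_n/n$ contributes $d^{-3q_2/4}(d^{2+q_2}/n)^{1/4}=(d^{2-2q_2}/n)^{1/4}$. The term $\|\Ga\|_\opr^{3/2}m_n^{3/4}/\sqrt n$ contributes $d^{-3q_2/4}(d^{3/2+3q_1/2}/\sqrt n)^{1/4}=(d^{3+3q_1-6q_2}/n)^{1/8}$. Writing $\ell_n^{-3/16}=(\ell_n^{-3/4})^{1/4}=(\ell_n^{-3/2})^{1/8}$ produces the first two entries of \eqref{delta_nr}.

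For $\mathcal A_{2,n}$, the term $\|\Ga\|_\opr^{1/2}m_n^{3/4}/\sqrt n$ gives $(d^{3+q_1-4q_2}/n)^{1/6}$, and with $\ell_n^{-1/6}=(\ell_n^{-1})^{1/6}$ this is the third entry. The other term of $\mathcal A_{2,n}$ gives $\ell_n^{-1/6}(d^{2-2q_2}/n)^{1/3}$. This is absorbed into the first entry: $\ell_n\le 1$ implies $\ell_n^{-1/6}\le\ell_n^{-3/16}$, and $x^{1/3}\le x^{1/4}$ when $x\le 1$.

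The remaining and main step is $\mathcal A_{3,n}$, which requires the bound $\E\|R_n\|^2=\tr(\Gamma_{n,R})\lesssim d/n$. I would expand
\[
\E\Big\|\sum_{\iinr} f^{\bm i}(\xir)\Big\|^2=\sum_{\bm i,\bm j}\E\langle f^{\bm i},f^{\bm j}\rangle .
\]
First-order degeneracy makes every cross term vanish when $\bm i$ and $\bm j$ share at most one index. If they share none, the two factors are independent and mean zero. If they share exactly one index $k$, conditioning on $X_k$ kills both factors by the degeneracy identity $\E[f^{\bm i}\mid X_{i_c}]=0$. The surviving pairs number at most $C\,P_{n,2r-2}$. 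Each surviving term is bounded via Cauchy--Schwarz and $\E\|f^{\bm i}\|^2\lesssim \max_{\bm i}\E\|\widetilde h(\xir)\|^2\lesssim d$, where the comparison between $f^{\bm i}$ and $\widetilde h$ uses Jensen for the conditional expectations in $\vartheta_{\bm i,c}$.

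Combining with the normalization in \eqref{rem} gives
\[
\E\|R_n\|^2\lesssim \frac{n}{P_{n,r}^2}\,P_{n,2r-2}\,d\asymp \frac{d}{n}.
\]
The care needed here is that the kernels are tuple-specific and the observations non-identical, so the orthogonality must be checked pairwise rather than through a standard Hoeffding-component argument.

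Then $\mathcal A_{3,n}\lesssim d^{-q_2/2}\bigl(d/n+\sqrt{(d/n)\,d}\bigr)^{1/2}\lesssim (d^{2-2q_2}/n)^{1/4}$. Since $\ell_n^{-1/8}\le\ell_n^{-3/16}$, this is dominated by the first entry of \eqref{delta_nr}. Collecting the three contributions and taking the maximum gives $\Delta_n\lesssim\Delta_{n,d}$.
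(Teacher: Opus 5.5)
Your proposal is correct and is essentially the paper's argument. The paper proves Proposition~\ref{prop: GA} by inserting the bounds of Lemma~\ref{lem: boundlrRd} into \eqref{interim1}, which amounts to Theorem~\ref{thm: GA} with $m_n\lesssim d^2$; those bounds are $\E\|g^{(i_1)}(X_{i_1})\|^l\lesssim d^{l/2}$, $\tr(\Ga)\lesssim d$, and $\E\|R_n\|^2\lesssim d/n$, the last obtained by the same pairwise degeneracy count as in Lemma~\ref{lem: boundlrHilb1}, and your exponent bookkeeping (absorbing the $m_n/n$ part of $\mathcal A_{2,n}$ and all of $\mathcal A_{3,n}$ into the first entry of $\Delta_{n,d}$ via $\ell_n\le 1$ and the reduction to that entry being at most one) correctly supplies the details the paper skips.
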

Proposition~\ref{prop: GA} addresses the more general Euclidean triangular-array setting under only coordinatewise moment conditions, at the cost of dimension-dependent rates. The Gaussian approximation incorporates the finite-dimensional specialization $\mathcal{H}_n=\mathbb{R}^{d_n}$, where the dimension $d_n$ may increase with $n$. The assumptions \ref{B1}--\ref{B2} constitute a mild and practically relevant set of conditions under which the Gaussian approximation in Proposition~\ref{prop: GA} holds for nondegenerate $r$-th order $U$-statistics in the high-dimensional regime. Condition~\ref{B1} imposes only a uniform fourth-moment bound on the kernel coordinates.  This requirement is considerably weaker than the sub-exponential or Orlicz-type moment assumptions commonly imposed in the high-dimensional Gaussian approximation literature for hyperrectangles in \cite{chen2018gaussian, chen2019randomized, chernozhukov2023high}, and therefore accommodates substantially heavier-tailed settings. While Assumption~\ref{B1} is stated at the kernel level, for quadratic kernels it implicitly corresponds to requiring finite eighth moments of the underlying random vectors.  This is similar to the moment assumptions in earlier high-dimensional Gaussian approximation and covariance-testing results in \cite{li2012two} as they require finiteness of the eighth moments together with a strong factorization property for all mixed moments of orders four or more. Additionally, \cite{li2012two} also cannot incorporate covariance structures like $\Gamma_n = (1-\rho){\rm{I}}_d+\rho 1_d 1_d^\top$ for some $\rho \in (0,1),$ since they require $\tr(\Gamma_n^4) = o(\tr^2(\Gamma_n^2)).$ But for this choice of $\Gamma_n$, $\tr(\Gamma_n^4)/\tr^2(\Gamma_n^2) = C$ and thus the condition in \cite{li2012two} is not satisfied, and therefore the limiting distribution remains unclear. \cite{han2020test} derived results related to the Chi-square approximation of the covariance matrices, which require finite sixth moments, vanishing third moments, and fourth-order summability of the cumulants induced by a finite-order polynomial expansion of the i.i.d. innovations of $X_i$. Moreover, even under these structural assumptions, the best available non-asymptotic Chi-square approximation rate in \cite{han2020test} is of order $n^{-1/14}$, and this rate further deteriorates when the moment conditions are relaxed to the existence of only $(4+\delta)$-th moments for some $\delta>0$. Notably, Remark 2.1  of \cite{han2020test} conjectured that a Stein's method-based approach could lead to a sharper chi-square approximation rate, and the present article contributes to that.  

Condition~\ref{B2} imposes only a minimal spectral requirement, namely that the two largest eigenvalues of the covariance matrix $\Ga$ are bounded away from zero. This is substantially weaker than conditions commonly assumed in the literature, which often require either a uniformly positive lower bound on all coordinatewise variances or strict positivity of the entire eigen-spectrum, see \cite{fan2015power, cai2013two, chen2018gaussian, chen2019randomized, Bickel2008a, Rothman01032009}.  In contrast, Condition \ref{B2} is compatible with settings in which most directions exhibit negligible variance, including approximate factor models and other effective low-rank structures. Conditions akin to \ref{B2} can be found in \cite{lopes2020bootstrapping, lin2023high}, where the authors emphasized that valid inference can still be achieved even when only a few dominant directions contribute substantially to the signal.  

To get a better idea about the approximation rates, we explore the bounds under the following setups.

\begin{remark} \label{rem: RdGA}
     We consider matrices with bounded entries, which holds due to Assumption \ref{B1}, and have the second largest eigenvalue bounded away from zero, which is due to Assumption \ref{B2}. Under the setup in Proposition \ref{prop: GA}, consider the following choices of the covariance matrix $\Ga$, where
    \begin{enumerate}[label={(\alph*)}]
    \item\label{(i)} $\|\Ga\|_{op} \asymp 1$ and $\|\Ga\|_F \asymp \sqrt{d}$ which subsequently implies that $\ell_n \asymp 1$. Then we have $\Delta_{n,d} \lesssim \max\{(d/n)^{1/6}, (1/n)^{1/8}\}.$ Examples of such $\Ga$ include $\Ga = {\rm{I}}_d$ or any diagonal matrix with non-zero and uniformly bounded diagonal elements or any first-order autocorrelation matrix, where $\Ga = (\rho^{|i-j|})^{d\times d},$ with $\rho\in (0,1).$
    \item\label{(ii)} $\Ga = EQ_{\rho,d} = (1-\rho){\rm{I}}_d+\rho J_d$ as the equi-correlation matrix, where $\rho\in(0,1),$ where ${\rm{I}}_d$ is an identity matrix and $J_d$ is a matrix with all entries as one. Then, we have $\|\Ga\|_{op} \asymp d$, $\|\Ga\|_F \asymp d$ with $\ell_n \asymp 1/d$ and subsequently $\Delta_{n,d} \lesssim (d^{3/2}/n)^{1/8}.$
\item\label{(iii)}
\(\Gamma_n= \operatorname{BlockDiag} \bigl(EQ_{\rho_1,d_1},\ldots,EQ_{\rho_K,d_K}\bigr),\) where \(\sum_{k=1}^K d_k=d,
\) \(K\ge2\) is fixed and \(\{EQ_{\rho_k,d_k}:1\le k\le K\}\) is a collection of equicorrelation matrices defined as in \ref{(ii)}. Suppose that at least two distinct block sizes are comparable to \(d\). If the corresponding correlations are bounded away from zero, then at least two block-leading eigenvalues are of order \(d\). Consequently,
$\|\Gamma_n\|_{\opr}\asymp d,$
$\|\Gamma_n\|_F\asymp d,$ and
$\ell_n\asymp1$.
Under this setting, \(\Delta_{n,d}\lesssim n^{-1/8},\)
which is independent of \(d\). This conclusion continues to hold when the correlations depend on \(d\), provided that the correlations corresponding to at least two blocks of size comparable to \(d\) remain bounded away from zero. In particular, these correlations may tend to one as \(d\to\infty\).
\end{enumerate}
\end{remark}

\subsection{Bootstrap approximation}\label{sec: boot}
In this section, we consider the bootstrap approximation results for the quantity $T_n$. The bootstrap procedures discussed in this section serve as a crucial tool to overcome the problem of approximating the covariance operator $\Ga$ associated with $Z_n$. 
The first approximation result is a consequence of Theorem \ref{thm: GA}. The second and third approximation steps follow from a Gaussian comparison principle and some moment inequalities for $U$-statistics, which will be made precise later in the proofs. 
We considered three different kinds of bootstraps, namely Empirical Bootstrap (EB), Gaussian Weighted Bootstrap (GWB) and Jackknife Multiplier Bootstrap (JMB). These bootstraps have also been considered in \cite{chen2018gaussian} for $r = 2$ and for an i.i.d kernel $\widetilde h$.  Before stating the result on bootstrap approximation, we briefly discuss the bootstrap procedures in the following subsections. 

\subsubsection{Empirical bootstrap}

Let
$X_1^*,\ldots,X_n^*$ be i.i.d.\ draws from the empirical distribution.
Conditionally on 
$\bm X=\{X_1,\ldots,X_n\}$,
the empirical bootstrap sample can be represented as
$
X_i^*=X_{\xi_i},
$
where $\xi_1,\ldots,\xi_n$ are i.i.d.\ uniformly distributed on
$\{1,\ldots,n\}$. Define
\begin{equation*}
U_n^{\mathrm{EB}}
=
\frac{1}{\npr}
\sum_{\iinr}
\widetilde h(X_{\xi_{i_1}},\ldots,X_{\xi_{i_r}}),
\end{equation*}
and
\begin{equation*}
V_n
:=
\E^\star[U_n^{\mathrm{EB}}]
=
\frac{1}{n^r}
\sum_{\jjcr}
\widetilde h(\xjr).
\end{equation*}
The centered empirical-bootstrap statistic is
\begin{equation*}
T_n^{\mathrm{EB}}
=
\frac{\sqrt n}{r}
\left(U_n^{\mathrm{EB}}-V_n\right).
\end{equation*}

For $j_1\in\{1,\ldots,n\}$, define the conditional first-order
projection
\begin{equation*}
g^{\bm X}(j_1)
:=
\frac{1}{n^{r-1}}
\sum_{1\leq j_2,\ldots,j_r\leq n}
\widetilde h(\xjr)
-
V_n.
\end{equation*}
Then
$
\E^\star[g^{\bm X}(\xi_1)]=0.
$
The conditional Hoeffding decomposition gives
\begin{equation}\label{Hoeff decom}
T_n^{\mathrm{EB}}
=
L_n^{\mathrm{EB}}+R_n^{\mathrm{EB}},
\end{equation}
where
\begin{equation*}
L_n^{\mathrm{EB}}
=
\frac{1}{\sqrt n}
\sum_{i_1=1}^n
g^{\bm X}(\xi_{i_1}).
\end{equation*}
To define the remainder, set
\begin{equation*}
h^{\bm X}(j_1,\ldots,j_r)
:=
\widetilde h(\xjr)-V_n,
\end{equation*}
and
\begin{equation*}
f^{\bm X}(j_1,\ldots,j_r)
:=
h^{\bm X}(j_1,\ldots,j_r)
-
\sum_{c=1}^r g^{\bm X}(j_c).
\end{equation*}
It follows that
\begin{equation*}
\E^\star\!\left[
f^{\bm X}(\xi_1,\ldots,\xi_r)
\mid \xi_c
\right]
=
0,
\qquad c=1,\ldots,r.
\end{equation*}
Moreover,
\begin{equation*}
R_n^{\mathrm{EB}}
=
\frac{1}{r\sqrt n\,P_{n-1,r-1}}
\sum_{\iinr}
f^{\bm X}(\xi_{i_{[1:r]}}).
\end{equation*}

The conditional covariance operator of the linear component is
\begin{equation*}
\widehat\Gamma_n^{\mathrm{EB}}
:=
\operatorname{Cov}^\star\!\left(
g^{\bm X}(\xi_1)
\right)
=
\frac{1}{n}
\sum_{j=1}^n
g^{\bm X}(j)\otimes g^{\bm X}(j).
\end{equation*}
Accordingly,
\begin{equation*}
L_n^{\mathrm{EB}}\mid\bm X
\quad\text{has covariance operator}\quad
\widehat\Gamma_n^{\mathrm{EB}}.
\end{equation*}
We also write
\begin{equation*}
\widehat\Gamma_{n,R}^{\mathrm{EB}}
:=
\E^\star\!\left[
R_n^{\mathrm{EB}}\otimes R_n^{\mathrm{EB}}
\right],
\end{equation*}
for the conditional covariance operator of the empirical-bootstrap
remainder.

Although the resampling variables $\xi_1,\ldots,\xi_n$ are
conditionally i.i.d., the original observations
$X_1,\ldots,X_n$ need not be identically distributed. Consequently,
$\widehat\Gamma_n^{\mathrm{EB}}$ need not automatically estimate
$\Gamma_n$. The discrepancy between these two covariance operators is
therefore retained explicitly in the general bootstrap approximation
bound.

\subsubsection{Gaussian weighted bootstrap}

Let $w_1,\ldots,w_n$ be i.i.d.\ $N(1,1)$ random variables independent
of $\bm X$, and set
$
\varepsilon_i:=w_i-1
$
and
$
\overline w:=n^{-1}\sum_{i=1}^n w_i.
$
Define
\begin{equation*}
U_n^{\mathrm{GWB}}
=
\frac{1}{\npr}
\sum_{\iinr}
w_{i_1}\cdots w_{i_r}\widetilde h(\xir)
-
r(\overline w-1)U_n.
\end{equation*}
Then
\begin{equation*}
T_n^{\mathrm{GWB}}
:=
\frac{\sqrt n}{r}
\left(U_n^{\mathrm{GWB}}-U_n\right).
\end{equation*}
Expanding
$
w_{i_1}\cdots w_{i_r}
=
\prod_{c=1}^r(1+\varepsilon_{i_c})
$
gives
\begin{equation*}
T_n^{\mathrm{GWB}}
=
L_n^{\mathrm{GWB}}+R_n^{\mathrm{GWB}},
\end{equation*}
where
\begin{equation*}
L_n^{\mathrm{GWB}}
=
\frac{1}{\sqrt n}
\sum_{i_1=1}^n
\widehat g^{(i_1)}(X_{i_1})\varepsilon_{i_1}
\end{equation*}
and
\begin{equation*}
R_n^{\mathrm{GWB}}
=
\sum_{k=2}^r T_n^{(k)}.
\end{equation*}
For $k=2,\ldots,r$, define
\begin{equation*}
\begin{split}
T_n^{(k)}\
&:=
\frac{\binom{r}{k}}
{r\sqrt n\,P_{n-1,k-1}}
\sum_{(i_1,\ldots,i_k)\in\mathcal I_n^k}
\Bigg\{
\frac{1}{P_{n-k,r-k}}
\sum_{(i_{k+1},\ldots,i_r)
\in\mathcal I_{n,-\{i_1,\ldots,i_k\}}^{r-k}}
\widetilde h(\xir)
\Bigg\}
\prod_{l=1}^k\varepsilon_{i_l},
\end{split}
\end{equation*}
where
\begin{equation*}
\mathcal I_{n,-\{j_1,\ldots,j_k\}}^{r-k}
:=
\left\{
(i_{k+1},\ldots,i_r)\in\mathcal I_n^{r-k}:
i_{k+1},\ldots,i_r\notin\{j_1,\ldots,j_k\}
\right\}.
\end{equation*}

Conditionally on $\bm X$,
\begin{equation*}
L_n^{\mathrm{GWB}}
\sim
N_{\h}\!\left(0,\widehat\Gamma_n^{\mathrm{GWB}}\right),
\end{equation*}
where
\begin{equation*}
\widehat\Gamma_n^{\mathrm{GWB}}
:=
\frac{1}{n}
\sum_{i_1=1}^n
\widehat g^{(i_1)}(X_{i_1})
\otimes
\widehat g^{(i_1)}(X_{i_1}).
\end{equation*}
We also define
\begin{equation*}
\widehat\Gamma_{n,R}^{\mathrm{GWB}}
:=
\E^\star\!\left[
R_n^{\mathrm{GWB}}\otimes R_n^{\mathrm{GWB}}
\right].
\end{equation*}

\subsubsection{Jackknife multiplier bootstrap}

Let $z_1,\ldots,z_n$ be i.i.d.\ standard normal random variables
independent of $\bm X$. Recall that
\begin{equation*}
T_n^{\mathrm{JMB}}
=
\frac{1}{\sqrt n}
\sum_{i_1=1}^n
\widehat g^{(i_1)}(X_{i_1})z_{i_1}.
\end{equation*}
Conditionally on $\bm X$,
\begin{equation}\label{condJMB}
T_n^{\mathrm{JMB}}
\sim
N_{\h}\!\left(0,\widehat\Gamma_n^{\mathrm{JMB}}\right),
\end{equation}
where
\begin{equation*}
\widehat\Gamma_n^{\mathrm{JMB}}
:=
\frac{1}{n}
\sum_{i_1=1}^n
\widehat g^{(i_1)}(X_{i_1})
\otimes
\widehat g^{(i_1)}(X_{i_1}).
\end{equation*}
Consequently,
\begin{equation*}
\widehat\Gamma_n^{\mathrm{JMB}}
=
\widehat\Gamma_n^{\mathrm{GWB}}.
\end{equation*}

Although the JMB and GWB have the same conditionally Gaussian linear
component, their nonlinear structures are different. The JMB
approximates the H\'ajek projection directly and has no higher-order
bootstrap remainder. The GWB additionally contains the Gaussian-chaos
remainder $R_n^{\mathrm{GWB}}$.

For non-identically distributed observations,
$\widehat\Gamma_n^{\mathrm{JMB}}$ need not automatically estimate
$\Gamma_n$. The approximation theorem below therefore retains the
covariance discrepancy explicitly.

For
$\star\in\{\mathrm{EB},\mathrm{GWB},\mathrm{JMB}\}$, define
\begin{equation*}
d_{\mathrm{Kol}}^\star
\left(\|T_n^\star\|,\|Z_n\|\right)
:=
\sup_{a\geq0}
\left|
\Prob^\star(\|T_n^\star\|\leq a)
-
\Prob(\|Z_n\|\leq a)
\right|.
\end{equation*}
Since $\h$ is separable and the bootstrap statistics are Bochner
integrable, the corresponding conditional expectations are well
defined; see
\cite[pp.~125--128]{vakhania2012probability}. Next, we state the bootstrap Berry--Esseen bound for $\|T_n^\star\|$ and $\|Z_n\|$ for $\star \in \{\text{EB},\text{GWB},\text{JMB} \}$. To the best of our knowledge, bootstrap approximation results for Hilbert-valued $U$-statistics have not been established in the existing literature. 
\begin{theorem} \label{thm: BA}
    Under the conditions of Theorem \ref{thm: GA}, we have
\[
\begin{aligned}
\E \big[d_{\mathrm{Kol}}^\star
(\|T_n^\star\|,\|Z_n\|)\big]
\lesssim &
\ell_n^{-3/16} \mathcal B_{1,n}^\star
+ \ell_n^{-1/6} \left\{ {\mathcal B_{2,n}^\star} + \mathcal D_{1,n}^\star \right\} 
+ \ell_n^{-1/8} \left\{ \mathcal C_{1,n}^\star + \mathcal D_{2,n}^\star \right\},
\end{aligned}
\]
where for $\star = \text{EB}$,
\[
\begin{aligned}
\mathcal B_{1,n}^\star
&:=
\frac{1}{\|\Ga\|_\hs^{3/4}}
\left\{
\frac{\E\|\widehat\Gamma_n^\star\|_\hs\; m_n}{n}
+
\frac{\{\E\|\widehat\Gamma_n^\star\|_{\mathrm{op}}\}^{3/2}m_n^{3/4}}{\sqrt n}
\right\}^{1/4},\\
\mathcal B_{2,n}^\star
&:=
\frac{1}{\|\Ga\|_\hs^{2/3}}
\left\{
\frac{\{\E\|\widehat\Gamma_n^\star\|_{\mathrm{op}}\}^{1/2}m_n^{3/4}}{\sqrt n}
+
\frac{m_n}{n}
\right\}^{1/3},
\end{aligned}
\]
and $\mathcal B_{1,n}^\star = \mathcal B_{2,n}^\star = 0$ for $\star \in \{\text{GWB},\text{JMB} \}$. 

For $\star \in \{\text{EB},\text{GWB} \},$ set
\[
\begin{aligned}
\mathcal C_{1,n}^\star
&:=
\frac{1}{\|\Ga\|_\hs^{1/2}}\left\{ \E\tr(\widehat\Gamma_{n,R}^\star) + \sqrt{ \E\tr(\widehat\Gamma_{n,R}^\star) \E\tr(\widehat\Gamma_n^\star)} \right\}^{1/2},
\end{aligned}
\]
and $\mathcal C_{1,n}^\star = 0$ for $\star = \text{JMB}$. 

For $\star \in \{\text{EB},\text{GWB},\text{JMB} \},$ set
\[
\begin{aligned}
\mathcal D_{1,n}^\star
&:=
\frac{1}{\|\Ga\|_\hs^{2/3}}
\left\{
\sqrt{
\big\{\E\operatorname{tr}(\widehat\Gamma_n^\star)
+\operatorname{tr}(\Gamma_n)\big\}
\big\{\E\|\widehat\Gamma_n^\star\|_{\mathrm{op}}
+\|\Gamma_n\|_{\mathrm{op}}\big\}
}
\,
\E\|\widehat\Gamma_n^\star-\Gamma_n\|_\hs
\right\}^{1/3},\\
\mathcal D_{2,n}^\star
&:=
\frac{1}{\|\Ga\|_\hs^{1/2}}
\left\{\E\left|
\operatorname{tr}(\widehat\Gamma_n^\star-\Gamma_n)
\right|\right\}^{1/2}.
\end{aligned}
\]
\end{theorem}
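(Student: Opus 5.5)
We split the bootstrap error with the triangle inequality through a conditional Gaussian surrogate. For each $\star$, let $\widehat Z_n^\star\mid\bm X\sim N_{\h}(0,\widehat\Gamma_n^\star)$, and write
\[
d_{\mathrm{Kol}}^\star(\|T_n^\star\|,\|Z_n\|)\le d_{\mathrm{Kol}}^\star(\|T_n^\star\|,\|\widehat Z_n^\star\|)+d_{\mathrm{Kol}}^\star(\|\widehat Z_n^\star\|,\|Z_n\|).
\]
The first term is a conditional Gaussian approximation, handled by rerunning the proof of Theorem~\ref{thm: GA} conditionally on $\bm X$. The second term is a Gaussian-to-Gaussian comparison over centered balls. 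The terms $\mathcal B^\star$ and $\mathcal C^\star$ come from the first piece and the terms $\mathcal D^\star$ from the second, matching the structure of the statement.

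\textbf{Step 1 (conditional Gaussian approximation).} For EB we use the conditional Hoeffding decomposition \eqref{Hoeff decom}, $T_n^{\mathrm{EB}}=L_n^{\mathrm{EB}}+R_n^{\mathrm{EB}}$. Given $\bm X$, the summands $g^{\bm X}(\xi_i)$ are i.i.d., so the Stein-method part of Theorem~\ref{thm: GA} applies verbatim with covariance $\widehat\Gamma_n^{\mathrm{EB}}$. Its fourth-moment inputs are conditional moments of the form $n^{-r}\sum_{\mathcal C_n^r}\|\widetilde h(\xjr)\|^4$, whose expectation is at most $m_n$. This is exactly why $m_n$ is defined as a maximum over $\mathcal C_n^r$.

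The remainder is handled as in the $\mathcal A_{3,n}$ term. On the event $\|R_n^{\mathrm{EB}}\|\le\varepsilon$ we use
\[
\bigl|\|T\|^2-\|L\|^2\bigr|\le\|R\|^2+2\|L\|\|R\|,
\]
Chebyshev with $\E^\star\|R\|^2=\tr(\widehat\Gamma^\star_{n,R})$, and Cauchy--Schwarz for the cross term, which produces $\sqrt{\tr\widehat\Gamma_{n,R}\tr\widehat\Gamma_n}$. We then combine this with the Gaussian anti-concentration of $\|Z_n\|^2$ and optimize $\varepsilon$.

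The key choice is to use anti-concentration of the \emph{target} $Z_n$, with constant $\|\Ga\|_\hs^{-1}\ell_n^{-1/2}$-type, rather than of $\widehat Z_n^\star$. That is why only $\|\Ga\|_\hs$ and $\ell_n$ appear in the denominators. To arrange this, we perform the smoothing step (smoothing of the indicator of $\{\|x\|^2\le a\}$) against $\|Z_n\|$ after the covariance comparison.

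Finally, we take $\E$ over $\bm X$. Since the bounds involve concave powers $(\cdot)^{1/4},(\cdot)^{1/3},(\cdot)^{1/2}$, Jensen moves the expectation inside. Hölder handles products such as $\|\widehat\Gamma\|_{\mathrm{op}}^{3/2}\,\E^\star\|\cdot\|^3$; if this is not clean, we fall back on bounding by $\{\E\|\widehat\Gamma\|_{\mathrm{op}}\}^{3/2}$ after Cauchy--Schwarz.

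For GWB the linear part $L_n^{\mathrm{GWB}}$ is exactly conditionally Gaussian, so $\mathcal B^\star=0$ and only the chaos remainder contributes $\mathcal C_{1,n}$. For JMB, \eqref{condJMB} gives $T_n^{\mathrm{JMB}}=\widehat Z_n^\star$ in law, so the first piece vanishes.

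\textbf{Step 2 (Gaussian comparison).} We compare $\Prob(\|\widehat Z\|\le a)$ with $\Prob(\|Z_n\|\le a)$ by Gaussian interpolation (Slepian-type), $Z(t)=\sqrt t\widehat Z+\sqrt{1-t}Z_n$, applied to a smoothed indicator $\varphi_\varepsilon(\|x\|^2)$. The derivative yields
\[
\tfrac12\E\langle D^2\varphi_\varepsilon(\|Z(t)\|^2),\widehat\Gamma-\Gamma_n\rangle_\hs .
\]
The Hessian equals $2\varphi'_\varepsilon I+4\varphi''_\varepsilon\,x\otimes x$. The identity part gives $\varepsilon^{-1}|\tr(\widehat\Gamma-\Gamma_n)|$. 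The rank-one part gives
\[
\varepsilon^{-2}\E\|Z(t)\|\,\|(\widehat\Gamma-\Gamma_n)Z(t)\|\le\varepsilon^{-2}\sqrt{(\tr\widehat\Gamma+\tr\Gamma_n)(\|\widehat\Gamma\|_{\mathrm{op}}+\|\Gamma_n\|_{\mathrm{op}})}\,\|\widehat\Gamma-\Gamma_n\|_\hs,
\]
using a Hilbert--Schmidt--Gaussian bound. Adding the anti-concentration cost $\varepsilon\,\ell_n^{-1/2}/\|\Ga\|_\hs$ for $\|Z_n\|^2$ and optimizing $\varepsilon$ separately for the two pieces gives the exponents $1/3$ and $1/2$, hence $\mathcal D_{1,n}^\star$ and $\mathcal D_{2,n}^\star$ with the stated $\ell_n$ powers. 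Expectation over $\bm X$ again follows from Jensen.

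\textbf{Main obstacle.} The hardest part is Step 1 for EB. There we must keep the anti-concentration in terms of $\Ga$, since $\widehat\Gamma^{\mathrm{EB}}_n$ may be poorly conditioned, so the Stein-error terms and the covariance mismatch must be merged in a single smoothing argument rather than by literally applying Theorem~\ref{thm: GA} conditionally. We also need the conditional Hoeffding degeneracy of $f^{\bm X}$ under sampling with replacement, including diagonal tuples from $\mathcal C_n^r$.
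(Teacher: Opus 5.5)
Your plan is in substance the paper's proof. The shared ingredients are: smoothing with $v_{\eta,\tau}$; Lemma~\ref{lem: stein} applied conditionally to $L_n^{\mathrm{EB}}$; the interpolation bound of Lemma~\ref{lem: GaussianComp} for $N_{\h}(0,\widehat\Gamma_n^\star)$ against $N_{\h}(0,\Ga)$ (your Step 2 is exactly that lemma); a Lipschitz-plus-Cauchy--Schwarz bound for the remainder; anti-concentration of the target $Z_n$ only; and one $\tau$ taken as the sum of the individual optimizers.

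The Kolmogorov-level split you open with cannot be used as written. Bounding its first term by rerunning Theorem~\ref{thm: GA} conditionally costs the random factor $\aleph(\widehat\Gamma_n^\star)$, which does not appear in the statement. What you call the main obstacle disappears once the triangle inequality is taken at the level of the smoothed functional:
\[
\bigl|\E^\star v_{\eta,\tau}(T_n^\star)-\E v_{\eta,\tau}(Z_n)\bigr|\le\bigl|\E^\star v_{\eta,\tau}(T_n^\star)-\E^\star v_{\eta,\tau}(L_n^\star)\bigr|+\bigl|\E^\star v_{\eta,\tau}(L_n^\star)-\E^\star v_{\eta,\tau}(\widehat Z_n^\star)\bigr|+\bigl|\E^\star v_{\eta,\tau}(\widehat Z_n^\star)-\E v_{\eta,\tau}(Z_n)\bigr|,
\]
with anti-concentration of $\|Z_n\|^2$ paid once afterwards. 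This is exactly the paper's (I)--(IV) for EB.

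The product $\sqrt{\tr\widehat\Gamma_{n,R}^\star\,\tr\widehat\Gamma_n^\star}$ comes from Lemma~\ref{lem:u_function_results}(iv), $|v_{\eta,\tau}(T)-v_{\eta,\tau}(L)|\lesssim\tau^{-1}(\|R\|^2+\|L\|\|R\|)$, followed by Cauchy--Schwarz. It does not come from the event $\{\|R_n^\star\|\le\varepsilon\}$, on which the perturbation $\varepsilon^2+2\varepsilon\|L_n^\star\|$ is still random. Degeneracy of $f^{\bm X}$ is not needed for this theorem, because $\mathcal C_{1,n}^\star$ is stated through $\tr(\widehat\Gamma_{n,R}^\star)=\E^\star\|R_n^\star\|^2$.

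Two computations are off.

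First, since $\Lambda_2(\Ga)=\ell_n^{1/2}\Lambda_1(\Ga)$, Lemma~\ref{lem: AC} gives $\aleph(\Ga)=\|\Ga\|_{\hs}^{-1}\ell_n^{-1/4}$, not $\|\Ga\|_{\hs}^{-1}\ell_n^{-1/2}$. Only the former turns $\aleph^{3/4},\aleph^{2/3},\aleph^{1/2}$ into $\ell_n^{-3/16},\ell_n^{-1/6},\ell_n^{-1/8}$; your constant gives $\ell_n^{-3/8},\ell_n^{-1/3},\ell_n^{-1/4}$.

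Second, the expectation over $\bm X$ cannot be taken by Jensen first. Write $M_3:=\E^\star\|g^{\bm X}(\xi_1)\|^3$. Jensen first leaves you with $\E[\|\widehat\Gamma_n^\star\|_{\mathrm{op}}^{3/2}M_3]$ or $\E[\{(\tr\widehat\Gamma_n^\star+\tr\Ga)(\|\widehat\Gamma_n^\star\|_{\mathrm{op}}+\|\Ga\|_{\mathrm{op}})\}^{1/2}\|\widehat\Gamma_n^\star-\Ga\|_{\hs}]$. No Hölder splitting of these reduces them to the products of first moments in $\mathcal B_{1,n}^\star$, $\mathcal B_{2,n}^\star$, $\mathcal D_{1,n}^\star$. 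Your Cauchy--Schwarz fallback also fails, since $\E X^{3/2}\ge(\E X)^{3/2}$.

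The correct order is to split the fractional power across the sum and apply Hölder to the fractional powers themselves:
\begin{itemize}
\item With exponents $(8/3,8/5)$ and then Jensen, $\E[\|\widehat\Gamma_n^\star\|_{\mathrm{op}}^{3/8}M_3^{1/4}]\le(\E\|\widehat\Gamma_n^\star\|_{\mathrm{op}})^{3/8}(\E M_3^{2/5})^{5/8}\le(\E\|\widehat\Gamma_n^\star\|_{\mathrm{op}})^{3/8}(\E M_3)^{1/4}$, and $\E M_3\lesssim m_n^{3/4}$.
\item With exponents $(6,6,3/2)$ and Jensen applied to $(\tr\widehat\Gamma_n^\star+\tr\Ga)^{1/6}(\|\widehat\Gamma_n^\star\|_{\mathrm{op}}+\|\Ga\|_{\mathrm{op}})^{1/6}\|\widehat\Gamma_n^\star-\Ga\|_{\hs}^{1/3}$, you get $\mathcal D_{1,n}^\star$.
\end{itemize}
This is what the paper's appeal to H\"older's inequality amounts to.
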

Theorem~\ref{thm: BA} separates the bootstrap approximation error into three conceptually different components. 
The terms $\mathcal B_{1,n}^{\star}$ and $\mathcal B_{2,n}^{\star}$ measure the error incurred when the conditional distribution of the linear component of the bootstrap statistic is replaced by a Gaussian distribution. These terms appear only for the empirical bootstrap. Indeed, conditionally on the data, the first-order projection of the empirical bootstrap remains a normalized sum of non-Gaussian random elements sampled from the empirical distribution. By contrast, the linear component of the Gaussian weighted bootstrap is conditionally Gaussian, while the entire jackknife multiplier bootstrap statistic is conditionally Gaussian. Consequently,
$
\mathcal B_{1,n}^{\star}=\mathcal B_{2,n}^{\star}=0,
\qquad
\star\in\{\mathrm{GWB},\mathrm{JMB}\}.
$
The term $\mathcal C_{1,n}^{\star}$ controls the contribution of the higher-order bootstrap remainder. Such a remainder is present for both the empirical and Gaussian weighted bootstraps. For the Gaussian weighted bootstrap, it consists of the higher-order Gaussian-chaos terms generated by products of two or more centered weights. The jackknife multiplier bootstrap, on the other hand, is constructed directly from the estimated first-order projections and therefore has no higher-order bootstrap remainder. Hence,
$
\mathcal C_{1,n}^{\mathrm{JMB}}=0.
$
This distinction is structural as the JMB approximates the distribution of the H\'ajek projection directly, whereas the EB and GWB reproduce, to different extents, the nonlinear structure of the original $U$-statistic.

Finally, $\mathcal D_{1,n}^{\star}$ and $\mathcal D_{2,n}^{\star}$ quantify the discrepancy between the bootstrap covariance operator $\widehat{\Gamma}_n^{\star}$ and the target covariance operator $\Gamma_n$. These terms arise from the Gaussian comparison between
$N_{\mathcal{H}_n}\bigl(0,\widehat{\Gamma}_n^{\star}\bigr)$ and $N_{\mathcal{H}_n}(0,\Gamma_n)$
and therefore are common to all three bootstrap procedures. Thus, even when the bootstrap statistic is conditionally Gaussian, as in the JMB case, the validity of the procedure still depends on estimating the covariance geometry of the H\'ajek projection with sufficient accuracy. In particular, the relevant estimation problem is to simultaneously control the trace, Hilbert--Schmidt, and operator-norm characteristics that determine the distribution and anti-concentration of the Gaussian norm.

For the bootstrap results under independent but non-identically
distributed observations, we impose the following first-order balance
condition. 
For each $i_1=1,\ldots,n$, define
\begin{equation*}
\bar{\theta}_{i_1}
:=
\frac{1}{\nopr}
\sum_{\iinri}
\theta_{\bm i}.
\end{equation*}

\begin{enumerate}[label={(A.\arabic*)}, start = 3]
\item\label{A3} For every $i_1=1,\ldots,n$,
\begin{equation*}
\bar{\theta}_{i_1}=\theta.
\end{equation*}
\end{enumerate}

Condition \ref{A3} allows the distributions of the observations to be
different, but rules out first-order heterogeneity in the tuple means. Although the identity
\[
    \frac{1}{n}\sum_{i_1=1}^n \bar{\theta}_{i_1}=\theta,
\]
holds automatically by double counting, Condition~\ref{A3} requires the
stronger indexwise equality $\bar{\theta}_{i_1}=\theta$ for every
$i_1=1,\ldots,n$. This condition ensures that the leave-one-in
estimators of the H\'ajek projections are correctly centered. Without
this, the bootstrap covariance generally contains an additional
component generated by the deterministic variation of
$\bar{\theta}_{i_1}$ across the observation indices.

To make the role of Condition~\ref{A3} precise, recall that the
population first-order projection corresponding to $X_{i_1}$ is
\begin{equation*}
g^{(i_1)}(x)
:=
\frac{1}{\nopr}
\sum_{\iinri}
\left\{
\E\left[
\widetilde h(\xir)\mid X_{i_1}=x
\right]
-
\theta_{\bm i}
\right\}.
\end{equation*}
By construction,
\begin{equation*}
\E\left[g^{(i_1)}(X_{i_1})\right]=0,
\qquad i_1=1,\ldots,n,
\end{equation*}
and the covariance operator of the H\'ajek projection is
\begin{equation*}
\Ga
=
\frac{1}{n}\sum_{i_1=1}^n
\E\left[
g^{(i_1)}(X_{i_1})
\otimes
g^{(i_1)}(X_{i_1})
\right].
\end{equation*}
If the conditional kernel expectation is instead centered by the
common parameter $\theta$, then
\begin{align*}
\widetilde g^{(i_1)}(x)
&:=
\frac{1}{\nopr}
\sum_{\iinri}
\E\left[
\widetilde h(\xir)\mid X_{i_1}=x
\right]
-\theta \\
&=
g^{(i_1)}(x)+b_{i_1},
\qquad
b_{i_1}:=\bar{\theta}_{i_1}-\theta.
\end{align*}
Consequently,
\begin{equation*}
\E\left[\widetilde g^{(i_1)}(X_{i_1})\right]
=
b_{i_1}.
\end{equation*}
Since $g^{(i_1)}(X_{i_1})$ is centered, we obtain
\begin{equation*}
\frac{1}{n}\sum_{i_1=1}^n
\E\left[
\widetilde g^{(i_1)}(X_{i_1})
\otimes
\widetilde g^{(i_1)}(X_{i_1})
\right]
=
\Ga+B_n,
\end{equation*}
where
\begin{equation*}
B_n
:=
\frac{1}{n}\sum_{i_1=1}^n
b_{i_1}\otimes b_{i_1}
=
\frac{1}{n}\sum_{i_1=1}^n
(\bar{\theta}_{i_1}-\theta)
\otimes
(\bar{\theta}_{i_1}-\theta).
\end{equation*}
The operator $B_n$ is nonnegative and satisfies
\begin{equation*}
\operatorname{tr}(B_n)
=
\frac{1}{n}\sum_{i_1=1}^n
\left\|\bar{\theta}_{i_1}-\theta\right\|^2.
\end{equation*}
Therefore, $B_n=0$ if and only if
$\bar{\theta}_{i_1}=\theta$ for every $i_1$, as required by
Condition~\ref{A3}.

The same identity summarizes the corresponding sample-level and
Gaussian-target consequences. Indeed, the leave-one-out coefficient
used in the jackknife multiplier bootstrap is
\begin{equation*}
\widehat g^{(i_1)}(X_{i_1})
=
\frac{1}{\nopr}
\sum_{\iinri}
\widetilde h(\xir)-U_n,
\end{equation*}
and it satisfies
\begin{equation*}
\E\left[\widehat g^{(i_1)}(X_{i_1})\right]
=
b_{i_1}.
\end{equation*}
Thus, without Condition~\ref{A3}, the bootstrap coefficients contain
deterministic between-index offsets. Up to the usual estimation and
remainder terms, the jackknife multiplier bootstrap and the linear
component of the Gaussian weighted bootstrap consequently target the
second-moment operator $\Ga+B_n$, rather than the covariance operator
$\Ga$ of the H\'ajek projection. Equivalently, the associated Gaussian
target is $\mathcal{N}_{\h}(0,\Ga+B_n)$ instead of
$\mathcal{N}_{\h}(0,\Ga)$. The pooled empirical bootstrap encounters
the same structural issue because it does not distinguish stochastic
within-index variation from deterministic variation in the quantities
$\bar{\theta}_{i_1}$; see \cite{han2018inference}. Hence,
Condition~\ref{A3} prevents deterministic between-index heterogeneity
from being incorporated as an additional covariance component.

\begin{remark}
The exact equality in Condition~\ref{A3} may be replaced by an
asymptotic balance condition requiring $B_n$ to be negligible under
the covariance-comparison normalization used in the bootstrap
theorem. Since $B_n$ is nonnegative,
$
\|B_n\|_{\mathrm{op}}
\leq
\|B_n\|_{\hs}
\leq
\operatorname{tr}(B_n),
$
a convenient sufficient condition is
$
\frac{\operatorname{tr}(B_n)}
     {\|\Ga\|_{\hs}}
=
\frac{
n^{-1}\sum_{i_1=1}^n
\|\bar{\theta}_{i_1}-\theta\|^2
}{
\|\Ga\|_{\hs}
}
\to 0.
$
A stronger rate may be required depending on the anti-concentration
factor appearing in the non-asymptotic bootstrap bound. Under this
weaker formulation, the bound must include additional
covariance-comparison terms involving $\operatorname{tr}(B_n)$,
$\|B_n\|_{\hs}$, and $\|B_n\|_{\mathrm{op}}$. We impose the exact
balance condition to avoid these additional terms and retain a cleaner
finite-sample formulation.
\end{remark}
The following proposition establishes the bootstrap approximation for $\h $ for every $n$.

\begin{proposition}\label{prop: HBA}
Suppose that Conditions \textnormal{\ref{A1}}--\textnormal{\ref{A3}} hold.
Then
\begin{equation*}
\E\left[
d_{\mathrm{Kol}}^\star
\big(
\|T_n^\star\|,\|Z_n\|
\big)
\right]
\lesssim
\begin{cases}
n^{-1/8}, & \star=\mathrm{EB},\\
n^{-1/6}, & \star\in\{\mathrm{GWB},\mathrm{JMB}\}.
\end{cases}
\end{equation*}
\end{proposition}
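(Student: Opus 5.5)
The plan is to specialize Theorem~\ref{thm: BA} under \ref{A1}--\ref{A3} by showing that every quantity on its right-hand side is bounded by a constant multiple of the claimed rate.

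\textbf{Deterministic quantities.} Condition~\ref{A1} gives $m_n\le C$. By Jensen and Cauchy--Schwarz, $\tr(\Ga)\le \max_{\bm i}\E\|\widetilde h(\xir)\|^2\lesssim 1$, so $\|\Ga\|_\opr\le\|\Ga\|_\hs\lesssim 1$. Condition~\ref{A2} gives $\|\Ga\|_\hs\ge\lambda_{(2)}(\Ga)>1/C$ and
\[
\ell_n\ge \lambda_{(2)}^2(\Ga)/\|\Ga\|_\hs^2\gtrsim 1 .
\]
Hence all the $\ell_n$ and $\|\Ga\|_\hs$ prefactors are $O(1)$, and it suffices to bound the bracketed moment quantities.

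\textbf{Moments of the bootstrap covariance operators.} Each $\widehat\Gamma_n^\star$ is an average of $\hat g\otimes\hat g$, so
\[
\E\|\widehat\Gamma_n^\star\|_\opr\le\E\|\widehat\Gamma_n^\star\|_\hs\le\E\tr(\widehat\Gamma_n^\star)\le n^{-1}\sum_i\E\|\hat g^{(i)}\|^2 .
\]
For the JMB/GWB coefficients this is $\lesssim m_n^{1/2}$ by Jensen on the averaged kernel. For the EB coefficients $g^{\bm X}(j)$, the sums run over $\mathcal C_n^r$ and include diagonal tuples; this is exactly why $m_n$ is defined over $\mathcal C_n^r$, and Jensen again gives an $O(1)$ bound. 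Plugging these into $\mathcal B_{1,n}^{\mathrm{EB}}$ and $\mathcal B_{2,n}^{\mathrm{EB}}$ gives
\[
\mathcal B_{1,n}^{\mathrm{EB}}\lesssim (n^{-1/2})^{1/4}=n^{-1/8},\qquad \mathcal B_{2,n}^{\mathrm{EB}}\lesssim n^{-1/6}.
\]
The first is the term that fixes the EB rate at $n^{-1/8}$.

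\textbf{Remainders.} I will show $\E\tr(\widehat\Gamma^\star_{n,R})\lesssim n^{-1}$. For GWB, $T_n^{(k)}$ is a $k$-th order Gaussian chaos with $k\ge2$, and orthogonality of chaoses computes
\[
\E^\star\|T_n^{(k)}\|^2\asymp n^{-1}P_{n-1,k-1}^{-2}\sum_{\mathcal I_n^k}\|\cdot\|^2\lesssim n^{-(k-1)}\times(\text{kernel second moments}).
\]
For EB, conditional degeneracy of $f^{\bm X}$ gives the standard variance bound $\E^\star\|R_n^{\mathrm{EB}}\|^2\lesssim n^{-1}\E^\star\|f^{\bm X}(\xi_{[1:r]})\|^2$, whose expectation is controlled by $m_n$ over $\mathcal C_n^r$. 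Then
\[
\mathcal C_{1,n}^\star\lesssim\{n^{-1}+n^{-1/2}\}^{1/2}\asymp n^{-1/4},
\]
which is dominated by both target rates.

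\textbf{Covariance discrepancy; the main obstacle.} The main obstacle is bounding $\E\|\widehat\Gamma_n^\star-\Ga\|_\hs$ and $\E|\tr(\widehat\Gamma_n^\star-\Ga)|$ by $n^{-1/2}$ without identical distributions. I would write $\hat g^{(i)}=g^{(i)}(X_i)+e_i$. Here $e_i$ collects the leave-one-in $U$-statistic error: it is a centered average over the other indices plus $U_n-\theta$. The population centering term $\bar\theta_i-\theta$ is exactly zero by \ref{A3}, and this is the step where \ref{A3} is used. Then
\[
\widehat\Gamma_n^\star-\Ga=\Big(n^{-1}\sum_i g^{(i)}\otimes g^{(i)}-\Ga\Big)+\text{cross terms}+n^{-1}\sum_i e_i\otimes e_i .
\]
The first term is an average of independent centered HS-valued elements, whose $L^2(\hs)$ norm is $\lesssim n^{-1/2}m_n^{1/2}$; fourth moments are what make this work. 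By Hoeffding/Efron--Stein-type variance bounds for generalized $U$-statistics, $\E\|e_i\|^2\lesssim n^{-1}$, so the last term is $O(n^{-1})$ in trace. The cross terms are $O(n^{-1/2})$ by Cauchy--Schwarz. For EB, the same argument applies to $g^{\bm X}$, with the extra $O(n^{-1})$ von Mises diagonal bias. This yields $\mathcal D_{1,n}^\star\lesssim n^{-1/6}$ and $\mathcal D_{2,n}^\star\lesssim n^{-1/4}$.

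Collecting the terms gives $n^{-1/6}$ for GWB/JMB and $n^{-1/8}$ for EB.
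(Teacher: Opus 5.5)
Your proposal is correct and follows the paper's route. You specialize Theorem~\ref{thm: BA}, use \ref{A1}--\ref{A2} to make $\ell_n^{-1}$ and the $\|\Ga\|_\hs$ prefactors $O(1)$, and then establish the bounds of Lemma~\ref{lem: boundlrHilb2}: the decomposition $\widehat g^{(i)}=g^{(i)}+\eta_i-U_n^\circ$ (with \ref{A3} removing $\bar\theta_i-\theta$), chaos orthogonality for the GWB remainder, and conditional degeneracy of $f^{\bm X}$ for the EB remainder. The differences are minor: you treat $\widehat\Gamma_n^{\mathrm{EB}}$ directly with an $O(n^{-1})$ von Mises diagonal correction rather than through the paper's comparison $\widehat\Gamma_n^{\mathrm{EB}}=c_{n,r}\widehat\Gamma_n^{\mathrm{JMB}}+D_{1n}-D_{2n}$, and you bound $\E\tr(\widehat\Gamma_n^\star)$ directly by Jensen rather than via the trace-discrepancy estimate.
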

Theorem~\ref{thm: BA} and Proposition~\ref{prop: HBA} show that, in a fixed separable Hilbert space, the covariance-comparison terms can be controlled without imposing any further finite-rank or a prescribed rate of eigenvalue decay condition in \cite{lopes2025improved, bunea2015fPCA, kolt20, lopes2023bootstrapping, kolt2017a}. Under Conditions~\ref{A1}--\ref{A2}, the global fourth-moment bound on the kernel implies uniform control of the trace and Hilbert--Schmidt norm of $\Gamma_n$, while the lower bound on $\lambda_{(2)}(\Gamma_n)$ prevents the covariance from approaching a rank-one configuration. More precisely,
\[
\ell_n
=
\frac{\displaystyle\sum_{j\geq 2}\lambda_{(j)}^2(\Gamma_n)}
     {\displaystyle\sum_{j\geq 1}\lambda_{(j)}^2(\Gamma_n)}
\geq
\frac{\lambda_{(2)}^2(\Gamma_n)}
     {\|\Gamma_n\|_\hs^2},
\]
so Conditions~\ref{A1}--\ref{A2} imply that $\ell_n$ is bounded away from zero uniformly in $n$. The anti-concentration factors appearing in Theorem~\ref{thm: BA} are therefore stable even when $\Gamma_n$ is singular or has infinitely many nonzero eigenvalues.

The dimension-free rates in Proposition~\ref{prop: HBA} are obtained from the covariance estimates
$
\mathbb{E}
\bigl\|
\widehat{\Gamma}_n^{\star}-\Gamma_n
\bigr\|_\hs
\lesssim n^{-1/2},
\qquad
\mathbb{E}
\left|
\operatorname{tr}\bigl(\widehat{\Gamma}_n^{\star}\bigr)
-
\operatorname{tr}(\Gamma_n)
\right|
\lesssim n^{-1/2},
$
together with uniform control of
$\mathbb{E}\operatorname{tr}
\bigl(\widehat{\Gamma}_n^{\star}\bigr)
\quad\text{and}\quad
\mathbb{E}
\bigl\|
\widehat{\Gamma}_n^{\star}
\bigr\|_{\mathrm{op}}.$
This explains why the conclusion remains valid in an infinite-dimensional Hilbert space.
The difference between the empirical-bootstrap rate and the multiplier-bootstrap rates has a structural origin. For the empirical bootstrap, the additional conditional Gaussian approximation of its non-Gaussian first-order projection produces the rate $n^{-1/8}$.
For the GWB and JMB, since this conditional linear component is already Gaussian, the leading bootstrap-to-Gaussian error is instead generated by covariance estimation and is of order
$n^{-1/6}.$

\begin{proposition} \label{prop: BA}
    In addition to the setup and conditions of Proposition \ref{prop: GA}, suppose that Condition~\textnormal{\ref{A3}} holds and $\tr(\Ga)\asymp d^{q_3},$ for some $q_2\leq q_3\leq 1.$ Then we have
    \begin{equation*}
\E\big[d_{\mathrm{Kol}}^\star(\|T_n^\star\|,\|Z_n\|)\big] \lesssim \Delta_{n,d}^*,
\end{equation*}
where
\begin{equation} \label{delta_n*r}
    \Delta_{n,d}^{*} = \max\Bigg\{ \Bigg(\frac{d^{2-2q_2}}{n \ell_n^{3/4}}\Bigg)^{1/4}, \Bigg(\frac{d^{3+3q_1-6q_2}}{n \ell_n^{3/2}}\Bigg)^{1/8}, \Bigg(\frac{d^{3+q_1-4q_2}}{n \ell_n}\Bigg)^{1/6}, \Bigg(\frac{d^{2+\frac{2}{3}q_3-\frac{8}{3}q_2}}{n \ell_n^{2/3}}\Bigg)^{1/4} \Bigg\}.
\end{equation}
Furthermore, if we assume
\begin{enumerate}[label={(B.\arabic*)}, start=3]
\item\label{BN} For all $u\in \R^d,$
\[\max_{1\leq i\leq n}\|u^\top g^{(i)}(X_i)\|_{\psi_2} \leq C \sqrt{u^\top \Ga u}.\]
\end{enumerate}
Then we have 
\begin{equation*}
\E\big[d_{\mathrm{Kol}}^\star(\|T_n^\star\|,\|Z_n\|)\big] \lesssim \Delta_{n,d},
\end{equation*}
where $\Delta_{n,d}$ is as defined in Proposition~\ref{prop: GA}.
\end{proposition}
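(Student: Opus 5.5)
The plan is to specialize Theorem~\ref{thm: BA} to $\h=\R^d$ and turn every abstract quantity into a power of $d$ and $n$, using \ref{B1}, \ref{A3} and the stated growth rates $\|\Ga\|_{\mathrm{op}}\asymp d^{q_1}$, $\|\Ga\|_F\asymp d^{q_2}$, $\tr(\Ga)\asymp d^{q_3}$. By Cauchy--Schwarz, \ref{B1} gives $m_n=\max\E\|\widetilde h\|^4\le d\sum_m\E|\widetilde h_m|^4\lesssim d^2$. The terms $\mathcal B^\star_{1,n},\mathcal B^\star_{2,n},\mathcal C^\star_{1,n}$ have the same form as $\mathcal A_{1,n},\mathcal A_{2,n},\mathcal A_{3,n}$ in Theorem~\ref{thm: GA}, with $\Ga$ replaced by $\widehat\Gamma^\star_n$ and $\Gamma_{n,R}$ by $\widehat\Gamma^\star_{n,R}$. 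So once we know $\E\|\widehat\Gamma_n^\star\|_{\mathrm{op}}\lesssim\|\Ga\|_{\mathrm{op}}+(\text{error})$, $\E\|\widehat\Gamma^\star_n\|_F\lesssim\|\Ga\|_F+(\text{error})$ and $\E\tr(\widehat\Gamma^\star_{n,R})\lesssim d/n$, these terms reproduce exactly the first three entries of $\Delta_{n,d}$ from the proof of Proposition~\ref{prop: GA}.

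The remainder bound comes from the usual second-moment computation for first-order degenerate generalized $U$-statistics. Applied conditionally, it gives $\E^\star\|R_n^\star\|^2\lesssim n^{-1}\,\E\|\widetilde h\|^2$ averaged over $\mathcal C_n^r$, hence $\lesssim d/n$. This is exactly where $m_n$ is taken over $\mathcal C_n^r$ for EB, since the von Mises tuples with ties appear there. For GWB, the chaos terms $T_n^{(k)}$ are orthogonal in $L^2$ of the weights and each carries a factor $n^{-(k-1)}$.

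The main work is the covariance terms $\mathcal D^\star_{1,n},\mathcal D^\star_{2,n}$. I will write $\widehat g^{(i)}(X_i)=g^{(i)}(X_i)+\delta_i$. Here \ref{A3} ensures $\delta_i$ has no deterministic offset, and $\delta_i$ is a degenerate $U$-type average plus $U_n-\theta$, so $\E\|\delta_i\|^2\lesssim d/n$. Then
\[
\widehat\Gamma_n^\star-\Ga=\Bigl(\tfrac1n\sum_i g^{(i)}\otimes g^{(i)}-\Ga\Bigr)+\text{cross terms}+\tfrac1n\sum_i\delta_i\otimes\delta_i .
\]
The first piece has $\E\|\cdot\|_F\le n^{-1/2}(\max_i\E\|g^{(i)}\|^4)^{1/2}\lesssim d/\sqrt n$. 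The cross terms are bounded by $(\tr\Ga)^{1/2}(d/n)^{1/2}$, and the trace differences are handled the same way. For EB the $V$-statistic centering introduces an extra $O(d/n)$ bias, which is harmless. Inserting these into $\mathcal D^\star_{1,n}$ gives
\[
\|\Ga\|_F^{-2/3}\bigl\{(d^{q_3}d^{q_1})^{1/2}\,d\,n^{-1/2}\bigr\}^{1/3}.
\]
With the $\ell_n^{-1/6}$ prefactor and $q_1\le q_2$, I expect this to produce the fourth entry $(d^{2+2q_3/3-8q_2/3}/(n\ell_n^{2/3}))^{1/4}$ after bounding the mixed exponent by the cruder one. $\mathcal D^\star_{2,n}$ should be dominated similarly. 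The step I expect to be hardest is this exponent bookkeeping: showing each term is $\le\Delta^*_{n,d}$. It needs care with which of $d^{q_1}$, $d^{q_3}$ or $d$ enters, and with $q_1\le q_2\le q_3$.

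Under \ref{BN}, I will sharpen the fourth-moment estimate of $\frac1n\sum g^{(i)}\otimes g^{(i)}-\Ga$. Sub-Gaussianity gives $\E\langle g,u\rangle^2\langle g,v\rangle^2\lesssim (u^\top\Ga u)(v^\top\Ga v)$, hence $\E\|g^{(i)}\|^4\lesssim\tr(\Ga)^2$ and $\E\|\widehat\Gamma-\Ga\|_F\lesssim\|\Ga\|_F\cdot\tr(\Ga)/(\sqrt n\|\Ga\|_F)$. The $\delta_i$ terms are handled in the same spectrally adapted way. Then $\mathcal D^\star_{1,n}$ becomes of order $(\tr\Ga\,\|\Ga\|_{\mathrm{op}}^{1/2}/(\sqrt n\,\|\Ga\|_F^{2}))^{1/3}$, which I will check is bounded by one of the first three entries of $\Delta_{n,d}$. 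Dropping the fourth entry then yields $\Delta_{n,d}$.
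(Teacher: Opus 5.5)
Your plan for the first bound follows the paper's route. You specialize Theorem~\ref{thm: BA}, use $m_n\lesssim d^2$, and feed in covariance bounds built from $\widehat g^{(i)}=g^{(i)}+\eta_i-U_n^\circ$ under \ref{A3}, as in Lemma~\ref{lem: boundlrRdboots}. However, you misidentify where the fourth entry of $\Delta^*_{n,d}$ comes from, and that is what breaks your argument under \ref{BN}.

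In $\mathcal D^\star_{1,n}$ you inserted $\E\|\widehat\Gamma^\star_n\|_{\mathrm{op}}\lesssim d^{q_1}$. Under \ref{B1} alone you only have $\E\|\widehat\Gamma^\star_n\|_{\mathrm{op}}\le\|\Ga\|_{\mathrm{op}}+\E\|\widehat\Gamma^\star_n-\Ga\|_{\mathrm F}\lesssim d^{q_1}+d/\sqrt n$. The piece you computed equals $\ell_n^{-1/6}d^{1/3+(q_1+q_3)/6-2q_2/3}n^{-1/6}$. It decays like $n^{-1/6}$, so it cannot be bounded by the fourth entry, which decays like $n^{-1/4}$. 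Since $q_3\le 1$, it is already dominated by the third entry of $\Delta_{n,d}$. The fourth entry is exactly the contribution of the $d/\sqrt n$ part of the operator-norm bound: $\ell_n^{-1/6}\|\Ga\|_{\mathrm F}^{-2/3}\{(d^{q_3}\cdot d/\sqrt n)^{1/2}\,d/\sqrt n\}^{1/3}=(d^{2+2q_3/3-8q_2/3}/(n\ell_n^{2/3}))^{1/4}$. For the first bound this is only a bookkeeping correction. But it shows the extra term is an operator-norm effect, not a Frobenius-norm one.

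Consequently your plan under \ref{BN} fails. Sharpening the Frobenius error to $\tr(\Ga)/\sqrt n$ gains nothing where it matters. In case (a) of Remark~\ref{rem: RdGA} ($\Ga={\rm I}_d$) it is again $d/\sqrt n$, the true order of that error even for Gaussian projections. Bounding $\|\widehat\Gamma^\star_n\|_{\mathrm{op}}$ by $\|\Ga\|_{\mathrm{op}}+\|\widehat\Gamma^\star_n-\Ga\|_{\mathrm F}$ then gives $1+d/\sqrt n$. Plugging this into $\mathcal D^\star_{1,n}$ reproduces $(d^{4/3}/n)^{1/4}$, which exceeds $\Delta_{n,d}\asymp\max\{(d/n)^{1/6},n^{-1/8}\}$ by the factor $(d^2/n)^{1/12}$ whenever $\sqrt n\ll d\le n$.

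Your claimed order for $\mathcal D^\star_{1,n}$ tacitly assumes $\E\|\widehat\Gamma^\star_n\|_{\mathrm{op}}\lesssim\|\Ga\|_{\mathrm{op}}$, which is precisely the missing step. Also, \ref{BN} constrains only $g^{(i)}(X_i)$, so the $\eta_i$ and $U_n^\circ$ parts of $\delta_i$ cannot be handled ``in the same spectrally adapted way''.

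What is needed is Lemma~\ref{lem: boundlrRdboots}(e), $\E\|\widehat\Gamma^\star_n\|_{\mathrm{op}}\lesssim d^{q_1}$. The paper obtains it from these ingredients:
\begin{itemize}
\item an operator-norm (effective-rank) concentration bound for the sub-Gaussian part, $\E\|n^{-1}\sum_i g^{(i)}g^{(i)\top}-\Ga\|_{\mathrm{op}}\lesssim\sqrt{\|\Ga\|_{\mathrm{op}}\tr(\Ga)/n}+\tr(\Ga)/n$;
\item the crude bound $\E\|n^{-1}\sum_i\eta_i\eta_i^\top\|_{\mathrm{op}}\le n^{-1}\sum_i\E\|\eta_i\|^2\lesssim d/n$;
\item matrix Cauchy--Schwarz for the cross term;
\item $\widehat\Gamma^{\mathrm{JMB}}_n\preceq n^{-1}\sum_i\bar g^{(i)}\bar g^{(i)\top}$, to discard $U_n^\circ$;
\item the growth condition $n\gtrsim d^{1-q_1}$, which follows from the reduction $\Delta_{n,d}<1$ via the third entry and $2q_2\le q_1+q_3$.
\end{itemize}
With this operator-norm bound, the crude Frobenius error $d/\sqrt n$ already suffices. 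Every term of Theorem~\ref{thm: BA} is then dominated by $\Delta_{n,d}$, including the empirical-bootstrap terms $\mathcal B^\star_{1,n},\mathcal B^\star_{2,n}$, which also involve $\E\|\widehat\Gamma^\star_n\|_{\mathrm{op}}$. No Frobenius sharpening is needed.
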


The first three terms in $\Delta_{n,d}^*$ coincide with the terms governing the Gaussian approximation in Proposition~\ref{prop: GA}. The additional term 
is the price for estimating the covariance geometry under only coordinatewise fourth-moment assumptions. In particular, Condition~\ref{B1} gives
$
\mathbb{E}
\bigl\|
\widehat{\Gamma}_n^{\star}-\Gamma_n
\bigr\|_{\mathrm{F}}
\lesssim
\frac{d}{\sqrt{n}},
$
and
$
\sum_{m=1}^d
\mathbb{E}
\left|
\widehat{\Gamma}_{n,mm}^{\star}
-
\Gamma_{n,mm}
\right|
\lesssim
\frac{d}{\sqrt{n}}.
$
The accumulation of these coordinatewise estimation errors is responsible for the fourth term in $\Delta_{n,d}^*$. It is therefore a covariance-estimation effect, rather than an additional error caused by the higher-order Hoeffding remainder.

Condition~\ref{BN} strengthens the control of the first-order projection in every direction:
$
\max_{1\leq i\leq n}
\bigl\|
u^\top g^{(i)}(X_i)
\bigr\|_{\psi_2}
\leq
C\sqrt{u^\top\Gamma_n u},
\qquad
u\in\mathbb{R}^d.
$
This condition is imposed on the H\'ajek projection $g^{(i)}(X_i)$, rather than on the full kernel $\widetilde h$. Its role is specifically to sharpen the operator-norm control of the bootstrap covariance estimator:
$
\mathbb{E}
\bigl\|
\widehat{\Gamma}_n^{\star}
\bigr\|_{\mathrm{op}}
\lesssim
d^{q_1}.
$
It is not used to control the higher-order degenerate components of the $U$-statistic, which remain governed by the fourth-moment condition on the kernel. Under Condition~\ref{BN}, the covariance-estimation terms are no larger than the terms already present in the Gaussian approximation, and the bootstrap-to-Gaussian error therefore has the same order $\Delta_{n,d}$ as the Gaussian approximation in Proposition~\ref{prop: GA}. Thus, the bootstrap matches the Gaussian approximation rate whenever the additional covariance-estimation term is dominated by the first three terms in $\Delta_{n,d}^*$.

To have a better understanding of the bounds of the bootstrap approximation rate, we explore the bounds under the following setups as considered in Remark~\ref{rem: RdGA}.

\begin{remark}\label{rem: RdBA}
    Note that the rates of the bootstrap approximations can be slower than the Gaussian approximation rates in Proposition \ref{prop: GA} in a few cases where the last term in $\Delta_{n,d}^*$ dominates the other terms. We compare the bootstrap approximation rates through the examples considered in Remark~\ref{rem: RdGA}.
    \begin{enumerate}[label={(\alph*)}]
    \item\label{(iv)} Consider $\Ga$ as in Remark~\ref{rem: RdGA}\ref{(i)}. Then $\tr(\Ga)\asymp d$ and we have $\Delta_{n,d}^{*} \lesssim $ $ \max\{(d^{4/3}/n)^{1/4}, $ $ (1/n)^{1/8}\},$ which is slower than the Gaussian approximation case if $d \gtrsim n^{3/8}.$
    \item\label{(v)} Consider $\Ga = EQ_{\rho} = (1-\rho){\rm{I}}_d+\rho J_d$ as Remark~\ref{rem: RdGA}\ref{(ii)}. In this case, we have $\tr(\Ga) \asymp d$ and we have $\Delta_{n,d}^{*} \lesssim (d^{3/2}/n)^{1/8},$ which is the same as the Gaussian approximation case. 
    \item\label{(vi)} For $\Ga$ as the block diagonal structure considered in Remark~\ref{rem: RdGA}\ref{(iii)}, we have $\tr(\Ga) \asymp d$ and $\Delta_{n,d}^{*} \lesssim (1/n)^{1/8},$ which is again the same as the Gaussian approximation case.
    \end{enumerate}
\end{remark}

\section{Applications to statistical inference}\label{sec: stat app}
This section illustrates how several widely used semi-parametric and non-parametric statistics in high-dimensional inference and functional data analysis can serve as examples of $U$-statistics. Consider the testing problem $H_0: \theta = 0 $ vs $H_a: \theta \neq 0$, where $\theta \in \h$. 
For the testing problem, we use $T_n$ to denote the observable
null-centered statistic
$
T_n:=\sqrt{n}U_n/r.
$
Accordingly, the centered statistic defined in Section~\ref{sec:prelim} is denoted in
this section by
$
T_n^{(0)}:=\sqrt{n}\{U_n-\theta\}/r.
$

Thus,
$T_n=T_n^{(0)}+\mu_n,$
where $\mu_n:=\sqrt{n}\theta/r$.
The testing criteria would be to reject $H_0$ at level of significance $\alpha \in (0,1)$, if $\|T_n\| > q_n^\star(1-\alpha),$ where $q_n^\star(\alpha) = \inf\{ x \ge 0: \Prob^\star( \|T_n^\star\| \le x) \ge \alpha \}$.  The following result establishes the consistency of the proposed test.

\begin{proposition} \label{prop: power}
     Under the setting of Propositions~\ref{prop: HGA} and \ref{prop: HBA}, we have
    \begin{equation*}
        \Prob_{H_0}( \|T_n\| > q_n^\star(1-\alpha) ) \to \alpha.
    \end{equation*}
     
     Furthermore, if there exists a sequence $a_n$ of positive numbers diverging  to infinity with $n,$ such that $\|\theta\| \ge C_{\alpha, r} \frac{ \{\tr (\Ga^2)\}^{1/4}}{\sqrt{n}}  a_n ,$ then we have,
    \begin{equation*}
        \Prob_{H_a}( \|T_n\| > q_n^\star(1-\alpha) ) \to 1,
    \end{equation*}
    where $C_{\alpha, r}$ is a positive constant that depends only on $\alpha$ and the order $r$.
\end{proposition}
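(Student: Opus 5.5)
The plan has two parts: size under $H_0$ and consistency under $H_a$. Both reduce to the Gaussian and bootstrap approximations already established, plus a quantile comparison.

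\emph{Size.} Under $H_0$ we have $\theta=0$, so $T_n=T_n^{(0)}$. Propositions~\ref{prop: HGA} and \ref{prop: HBA} give
\[
d_{\mathrm{Kol}}(\|T_n\|,\|Z_n\|)\to0
\qquad\text{and}\qquad
\E\, d^\star_{\mathrm{Kol}}(\|T_n^\star\|,\|Z_n\|)\to0 .
\]
By Markov's inequality there is a deterministic sequence $\varepsilon_n\downarrow0$ such that the event $E_n=\{d^\star_{\mathrm{Kol}}\le\varepsilon_n\}$ has probability tending to one.

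On $E_n$ we sandwich the bootstrap quantile between Gaussian quantiles. Let $q^Z(\cdot)$ denote the quantile of $\|Z_n\|$. Then
\[
q^Z(1-\alpha-\varepsilon_n)\le q_n^\star(1-\alpha)\le q^Z(1-\alpha+\varepsilon_n).
\]
Consequently,
\[
\Prob(\|T_n\|>q_n^\star)\le \Prob\bigl(\|T_n\|>q^Z(1-\alpha-\varepsilon_n)\bigr)+\Prob(E_n^c).
\]
The right-hand side is at most $\alpha+\varepsilon_n+d_{\mathrm{Kol}}(\|T_n\|,\|Z_n\|)+o(1)$. The lower bound follows symmetrically.

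The law of $\|Z_n\|$ has no atoms, because under \ref{A2} it has a continuous density. Hence $\Prob(\|Z_n\|\le q^Z(p))=p$ exactly, so there are no discreteness issues at the boundary. This gives convergence to $\alpha$.

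\emph{Power.} Under $H_a$ write $T_n=T_n^{(0)}+\mu_n$ with $\mu_n=\sqrt n\,\theta/r$. It suffices to show two things:
\begin{enumerate}[label=(\roman*)]
\item $q_n^\star(1-\alpha)=O_P\bigl((\tr\Ga^2)^{1/4}+\ldots\bigr)$ at a controlled scale;
\item $\|T_n\|\ge\|\mu_n\|-\|T_n^{(0)}\|$ eventually exceeds that scale.
\end{enumerate}

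For (i), the bootstrap statistic is centered at the resampling mean, so its law still approximates $\|Z_n\|$ even under $H_a$. The estimates behind Proposition~\ref{prop: HBA} do not use $\theta=0$, since they concern centered kernels. The previous sandwich therefore gives $q_n^\star(1-\alpha)\le q^Z(1-\alpha/2)$ with probability tending to one.

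Next we bound the Gaussian quantile. For $Z_n$ Gaussian, $\E\|Z_n\|^2=\tr\Ga$ and $\operatorname{Var}\|Z_n\|^2=2\tr(\Ga^2)$. Chebyshev's inequality then gives
\[
q^Z(1-\alpha/2)^2\le \tr\Ga+C_\alpha\{\tr(\Ga^2)\}^{1/2}.
\]

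It is cleaner to compare squared norms:
\[
\|T_n\|^2=\|T_n^{(0)}\|^2+2\langle T_n^{(0)},\mu_n\rangle+\|\mu_n\|^2 .
\]
The quantity $\|T_n^{(0)}\|^2-\tr\Ga$ is $O_P(\{\tr\Ga^2\}^{1/2})$ up to remainder terms. This follows from the Gaussian approximation of Theorem~\ref{thm: GA} together with the above concentration of $\|Z_n\|^2$. The cross term has variance $\lesssim\langle\mu_n,\Ga\mu_n\rangle+\E\|R_n\|^2\|\mu_n\|^2$, hence is $O_P(\|\Ga\|_{\mathrm{op}}^{1/2}\|\mu_n\|)+o_P(\|\mu_n\|)$.

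Now impose $\|\mu_n\|^2\ge C^2(\tr\Ga^2)^{1/2}a_n^2/r^2$. Then $\|\mu_n\|^2$ dominates both the fluctuation $C_\alpha\{\tr\Ga^2\}^{1/2}$ and the cross term, using $\|\Ga\|_{\mathrm{op}}\le(\tr\Ga^2)^{1/2}$. Therefore $\|T_n\|^2-q_n^{\star2}\to\infty$ in probability, which gives power tending to one.

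\emph{Main obstacle.} The delicate step is the bootstrap quantile under the alternative. We must confirm that the bounds of Theorem~\ref{thm: BA} and the covariance estimates $\E\|\widehat\Gamma_n^\star-\Ga\|_\hs\lesssim n^{-1/2}$ remain valid when $\theta\ne0$. This requires that \ref{A3} be preserved and that centering by $U_n$ removes $\theta$.

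If that fails, I would instead bound $q_n^{\star2}$ directly by conditional Markov and Chebyshev inequalities on $\|T_n^\star\|^2$, using $\E\tr\widehat\Gamma_n^\star\to\tr\Ga$. The cost is a weaker constant in the separation threshold, but it still gives the rate $\{\tr(\Ga^2)\}^{1/4}a_n/\sqrt n$ because $a_n\to\infty$ absorbs constants.
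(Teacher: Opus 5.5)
Your proposal is correct and follows the paper's argument. For size, you sandwich the bootstrap quantile between Gaussian quantiles, a step the paper outsources to Proposition~3.2 of \cite{koike2019mixed}. For power, you use the same squared-norm decomposition $\|T_n\|^2=\tr(\Ga)+\|\mu_n\|^2+W_n+V_n$: the bootstrap quantile is bounded by a Gaussian quantile at scale $\tr(\Ga)+C\|\Ga\|_\hs$, the term $W_n$ is controlled by the Gaussian approximation together with concentration of $\|Z_n\|^2$, and the cross term is handled by Chebyshev. The differences are cosmetic. You use Chebyshev on $\|Z_n\|^2$ where the paper uses an exponential quadratic-form tail bound, and you split off the $\Gamma_{n,R}$ contribution to the cross-term variance explicitly rather than asserting $\|\Gamma_u\|_{\opr}\lesssim\|\Ga\|_\hs$. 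Your reason for why the bootstrap bounds survive $\theta\neq0$ (data-dependent centering and shift-invariance of $\widehat\Gamma_n^\star$) is exactly what the paper relies on implicitly.
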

Under the settings of Propositions~\ref{prop: GA} and \ref{prop: BA}, this result also holds in $\R^d$ provided $\Delta_{n,d}^*\to 0.$

\begin{remark}\label{rem: concistency} The consistency statement in Proposition~\ref{prop: power} is most naturally interpreted on the squared-norm scale. Under the alternative, write
$
T_n
=
T_n^{(0)}+\mu_n,
$
where
$
T_n^{(0)}
:=
\sqrt{n}(U_n(\tilde{h})-\theta)/{r},
$ and 
$
\mu_n
:=\sqrt{n}\theta/{r}.
$
The centered component $T_n^{(0)}$ is approximated by
$Z_n\sim N_{\mathcal H_{n}}(0,\Gamma_n)$, so the corresponding Gaussian
approximation under the alternative is $Z_n+\mu_n$. The squared Gaussian
norm satisfies
$
\mathbb{E}\|Z_n\|^2
=
\operatorname{tr}(\Gamma_n),
$
and
$
\operatorname{Var}\bigl(\|Z_n\|^2\bigr)
=
2\operatorname{tr}(\Gamma_n^2)
=
2\|\Gamma_n\|_\hs^2.
$
Moreover,
$
\|Z_n+\mu_n\|^2
=
\|Z_n\|^2
+
2\langle Z_n,\mu_n\rangle
+
\|\mu_n\|^2.
$
Thus, the deterministic contribution of the alternative on the squared
scale is
$
\|\mu_n\|^2
=
n\|\theta\|^2/{r^2},
$
whereas the stochastic fluctuation of the squared null norm is of order
$
\bigl\{\operatorname{tr}(\Gamma_n^2)\bigr\}^{1/2}
=
\|\Gamma_n\|_\hs.
$
This leads to the signal-to-noise ratio
$
\operatorname{SNR}_n(\theta)
:=
n\|\theta\|^2/
     ({r^2\{\operatorname{tr}(\Gamma_n^2)\}^{1/2}}).
$
The separation assumption in Proposition~\ref{prop: power},
$
\|\theta\|
\geq
(C_{\alpha, r}
\{\operatorname{tr}(\Gamma_n^2)\}^{1/4}a_n)/{\sqrt{n}}
$
as
$
a_n\to\infty,
$
implies
$
\operatorname{SNR}_n(\theta)
\geq
C_{\alpha, r}^2a_n^2
\to \infty.
$
Consequently, the deterministic displacement of the squared norm
dominates both the null fluctuation and the random cross term
$2\langle Z_n,\mu_n\rangle$, yielding power converging to one.

It is therefore useful to define the separation radius
$
\rho_n(\Gamma_n)
:= \|\Gamma_n\|_\hs^{1/2}/\sqrt{n}.
$
Proposition~\ref{prop: power} proves consistency whenever
$
\|\theta\|/{\rho_n(\Gamma_n)}
\to\infty.
$
The scale $\rho_n(\Gamma_n)$ is the natural transition scale suggested by
the Gaussian proxy: alternatives satisfying
$\|\theta\|\asymp\rho_n(\Gamma_n)$ are expected to have nontrivial local
power, whereas the proposition establishes uniform consistency above this
scale by a diverging factor. The result is direction-free, since it depends
on $\theta$ only through $\|\theta\|$. Accordingly, it gives a uniform
sufficient condition over all directions of departure from the null. For a
specific direction lying in a low-variance eigenspace of $\Gamma_n$, a
sharper direction-dependent separation condition may be possible.

The spectral dependence of the separation radius is also informative. In $\R^d$, if
$\Gamma_n$ has $d$ eigenvalues of constant order, then
$
\operatorname{tr}(\Gamma_n^2)\asymp d
$
and hence
$
\rho_n(\Gamma_n)
\asymp d^{1/4}/{\sqrt{n}}.
$
By contrast, if only a fixed number of eigenvalues are nonnegligible, then
$\operatorname{tr}(\Gamma_n^2)\asymp 1$ and the separation radius reduces to
the parametric order $n^{-1/2}$. Thus, the relevant dimensional quantity is
the quadratic spectral mass of the H\'ajek covariance operator, rather
than the algebraic dimension of the ambient Hilbert space. The separation bound obtained here is the same as that in \cite{li2012two}; the detailed discussion of that has been moved to the supplement. 
\end{remark}

The preceding analysis provides a sufficient separation condition for consistency, but does not determine whether this rate can be improved by a different testing procedure. Although it is possible to give a general condition of minimax optimality for a $U$-statistic, it remains an abstract condition since the separation thresholds are highly dependent on the choice of the kernel $\tilde h$. Therefore, we choose Kendall’s tau for exposition purposes. The minimax analysis developed here shows that this separation threshold is rate-optimal up to a universal multiplicative constant.

Let \(d=p(p-1)/2\), and suppose that \(d/n\to0\). For a
\(p\times p\) matrix \(A\), write
\[
\operatorname{vech}(A)
=
\left(A_{jk}:1\le j<k\le p\right)^\top
\in\mathbb R^d,
\]
so that \(\operatorname{vech}(A)\) collects the strictly upper-triangular
entries of \(A\). For \(x,y\in\mathbb R^p\), let
\[
s(x,y)
=
\left(
\operatorname{sign}(x_1-y_1),\ldots,
\operatorname{sign}(x_p-y_p)
\right)^\top,
\]
and define the vector-valued symmetric kernel
\[
h(x,y)
=
\operatorname{vech}\bigl(s(x,y)s(x,y)^\top\bigr)
\in\mathbb R^d.
\]
Equivalently,
\[
h(x,y)
=
\left(
\operatorname{sign}(x_j-y_j)
\operatorname{sign}(x_k-y_k):
1\le j<k\le p
\right)^\top.
\]

For a distribution \(P\) on \(\mathbb R^p\) having continuous marginal
distributions, define the vector of population Kendall's tau coefficients by
\[
\tau(P)
=
\E_P[h(X_1,X_2)]
=
\left(
\E_P\left[
\operatorname{sign}(X_{1j}-X_{2j})
\operatorname{sign}(X_{1k}-X_{2k})
\right]:
1\le j<k\le p
\right)^\top,
\]
where \(X_1\) and \(X_2\) are independent with common distribution \(P\).

The corresponding empirical vector is the order-two \(U\)-statistic
\[
\widehat{\tau}_n
=
\binom{n}{2}^{-1}
\sum_{1\le i<l\le n}h(X_i,X_l).
\]
Equivalently,
\[
\widehat{\tau}_n
=
\left(
\binom{n}{2}^{-1}
\sum_{1\le i<l\le n}
\operatorname{sign}(X_{ij}-X_{l j})
\operatorname{sign}(X_{ik}-X_{lk}):
1\le j<k\le p
\right)^\top.
\]

We consider
\[
H_0:\tau(P)=0_d
\qquad\text{against}\qquad
H_1:\tau(P)\ne0_d,
\]
where $0_d$ is the $d\times 1 $ vector of zeroes.
Define
\[
T_n=\frac{\sqrt n}{2}\widehat{\tau}_n.
\]
Let \(T_n^\star\) denote the corresponding centered bootstrap statistic and
set
\[
q_n^\star(\alpha)
=
\inf\left\{
t\ge0:
\Pp^\star\bigl(\|T_n^\star\|\le t\bigr)\ge\alpha
\right\}.
\]
The bootstrap test is
\[
\widehat\phi_n(\alpha)
=
\mathbf 1\left\{
\frac{\sqrt n}{2}\|\widehat{\tau}_n\|
>
q_n^\star(1-\alpha)
\right\}.
\]

Let $\cP_n$ denote the class of data-generating distributions over which we establish uniform size control under the null and consistency against the specified alternatives for the proposed bootstrap tests, and define
\[
\cP_{0,n}
=
\left\{
P\in\cP_n:
\tau(P)=0_d
\right\}.
\]

Fix a constant \(\eta\in(0,1)\) and for \(\rho>0\), define the local Gaussian
correlation alternatives
\[
\cG_n^{\mathrm{loc}}(\eta,\rho)
=
\left\{
N_p(0,R):
\begin{array}{l}
R\text{ is a positive-definite }p\times p\text{ correlation matrix},\\
\|R-I_p\|_{\opr}\le\eta,
\|\tau(R)\|\ge\rho
\end{array}
\right\},
\]
where
\[
\tau(R)
:=
\tau\bigl(N_p(0,R)\bigr)
=
\left(
\frac{2}{\pi}\arcsin(R_{jk}):
1\le j<k\le p
\right)^\top
\in\mathbb R^d.
\]
Assume that, for all sufficiently large \(n\), \(\cP_n\) contains every
\(N_p(0,R)\) with \(R\) as above and also contains all centered Gaussian
product distributions \(N_p(0,\sigma^2I_p)\), for \(\sigma^2\) in a fixed
neighborhood of one.

\begin{proposition}[Minimax lower bound for the vector of Kendall's tau coefficients]
\label{prop:kendall-minimax}
For every sequence \(c_n\downarrow0\) and every sequence of tests \(\phi_n\)
satisfying
\[
\sup_{P\in\cP_{0,n}}
\E_{P}\phi_n
\le
\alpha+o(1),
\]
one has
\[
\inf_{P\in
\cG_n^{\mathrm{loc}}\left(
\eta,c_n d^{1/4}/\sqrt n
\right)}
\E_{P}\phi_n
\le
\alpha+o(1).
\]
Consequently, no asymptotic level-\(\alpha\) test can be uniformly
consistent at separation radii
$
o\left(\frac{d^{1/4}}{\sqrt n}\right).
$
\end{proposition}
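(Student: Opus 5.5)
Our approach is the standard Le Cam two-point (point-versus-mixture) argument, using $N_p(0,I_p)\in\cP_{0,n}$ as the null. We construct a prior $\pi_n$ supported on $\cG_n^{\mathrm{loc}}(\eta,c_nd^{1/4}/\sqrt n)$ and show that $\chi^2(\bar P_n^{\otimes},P_0^{\otimes n})\to0$, where $\bar P_n^{\otimes}=\int P_R^{\otimes n}\,d\pi_n(R)$ and $P_0=N_p(0,I_p)$. Then for any test $\phi_n$,
\[
\inf_{P\in\cG_n^{\mathrm{loc}}}\E_P\phi_n\le \int\E_{P_R}\phi_n\,d\pi_n\le \E_{P_0}\phi_n+\TV(\bar P_n^{\otimes},P_0^{\otimes n})\le\alpha+o(1),
\]
and $\TV\le\tfrac12\sqrt{\chi^2}$.

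\textbf{Prior.} We take random sign perturbations $R_\varepsilon=I_p+\delta\,E_\varepsilon$, where $E_\varepsilon$ is symmetric with zero diagonal, $(E_\varepsilon)_{jk}=\varepsilon_{jk}$, and the $\varepsilon_{jk}$, $j<k$, are i.i.d.\ Rademacher. We choose $\delta=c'_n/(\sqrt n\,d^{1/4})$ with $c'_n\downarrow0$ slowly; equivalently $\delta\asymp c'_n/(\sqrt n\,p^{1/2})$. Three membership checks are needed.
\begin{itemize}[label={}]
\item $\|\delta E_\varepsilon\|_{\opr}\lesssim\delta\sqrt p\asymp c'_n/\sqrt n\to0$ with probability tending to one, by standard bounds on Wigner-type matrices. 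Hence $R_\varepsilon$ is positive definite and $\|R_\varepsilon-I_p\|_{\opr}\le\eta$. To be exact, we condition the prior on this event; the conditioning costs $o(1)$ in total variation.
\item Since $|\arcsin x|\ge|x|$, we have $\|\tau(R_\varepsilon)\|\ge\tfrac{2}{\pi}\delta\sqrt d=\tfrac2\pi c'_nd^{1/4}/\sqrt n$. Choosing $c'_n=\tfrac\pi2c_n$ places this above $c_nd^{1/4}/\sqrt n$.
\item The requirement $d/n\to0$ guarantees $\delta$ is small, so the above bounds are uniform.
\end{itemize}

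\textbf{Chi-square computation.} This is the main step. The Gaussian identity gives
\[
\E_{P_0}\Bigl[\tfrac{dP_{R}}{dP_0}\tfrac{dP_{R'}}{dP_0}\Bigr]=\det(I-(R-I)(R'-I))^{-1/2}.
\]
Hence
\[
1+\chi^2=\E_{\varepsilon,\varepsilon'}\det\bigl(I-\delta^2E_\varepsilon E_{\varepsilon'}\bigr)^{-n/2}.
\]
Write $A=\delta^2E_\varepsilon E_{\varepsilon'}$. Since $\|A\|_{\opr}\lesssim c_n'^2/n$, the expansion $-\log\det(I-A)=\tr A+O(\|A\|_\hs^2)$ applies. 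Here
\[
\tr A=2\delta^2\sum_{j<k}\varepsilon_{jk}\varepsilon'_{jk}
\]
and $\|A\|_\hs^2\le\delta^4\|E\|_{\opr}^2\|E'\|_\hs^2\lesssim\delta^4p\cdot p^2$. The exponent becomes
\[
\tfrac n2\tr A+O\bigl(n\delta^4p^3\bigr)=n\delta^2\sum_{j<k}\varepsilon_{jk}\varepsilon'_{jk}+o(1),
\]
since $n\delta^4p^3\asymp c_n'^4p/n\to0$ because $p^2\asymp d=o(n)$.

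The products $\varepsilon_{jk}\varepsilon'_{jk}$ are i.i.d.\ Rademacher, so
\[
\E\exp\Bigl(n\delta^2\sum_{j<k}\varepsilon_{jk}\varepsilon'_{jk}\Bigr)=\cosh(n\delta^2)^d\le\exp\bigl(dn^2\delta^4/2\bigr)=\exp\bigl(c_n'^4/2\bigr)\to1.
\]
Thus $\chi^2\to0$.

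\textbf{Main obstacle.} The delicate point is making the log-det expansion uniform: its error must be controlled on the whole support of the prior, not merely in expectation. We handle this by the truncation to $\|E_\varepsilon\|_{\opr}\le C\sqrt p$ from the first membership check. On that event the remainder is deterministically $o(1)$, and the truncated prior differs from the untruncated one by $o(1)$ in total variation.

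The final statement follows directly: if a test were uniformly consistent at radii $o(d^{1/4}/\sqrt n)$, its worst-case power over $\cG_n^{\mathrm{loc}}$ would tend to one, contradicting the bound above for $\alpha<1$.
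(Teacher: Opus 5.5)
Your proof is correct. It uses the same Le Cam point-versus-mixture scheme as the paper, but with a genuinely different prior.

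\textbf{The paper's route.} The paper perturbs along a random rank-one direction, $R_\varepsilon=(I_p+a_nv_\varepsilon v_\varepsilon^\top)/(1+a_n/p)$ with $v_\varepsilon=p^{-1/2}(\varepsilon_1,\ldots,\varepsilon_p)^\top$. This uses only $p$ random signs, and the off-diagonal entries carry the rank-one sign pattern $\varepsilon_j\varepsilon_k$. Because $I_p+a_nvv^\top$ does not have unit diagonal, the paper must renormalize. It therefore compares against the rescaled null $N_p(0,\sigma_n^2I_p)$ with $\sigma_n^2=(1+a_n/p)^{-1}$; this is why the model class is assumed to contain $N_p(0,\sigma^2I_p)$ for $\sigma^2$ in a neighborhood of one. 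In exchange, the cross-moment is an exact rank-two determinant, $\E_0[\ell_v\ell_w]=(1-a_n^2q^2)^{-1/2}$ with $q=v^\top w$. No expansion is needed: the chi-square reduces to $\E\exp(\lambda_nS_p^2/p)$, which is handled by Gaussian linearization.

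\textbf{Your route.} Your Wigner-type prior uses $d$ independent signs and has unit diagonal automatically. So the null is exactly $N_p(0,I_p)$, and you need only this single distribution to lie in $\cP_{0,n}$. The leading term of the chi-square then factorizes as $\cosh(n\delta^2)^d$. The price is a log-determinant expansion whose remainder must be controlled. For the common entry size $\delta\asymp c_n/\sqrt{np}$, both routes reduce to $n\delta^2\sqrt d\to0$, i.e.\ separation $o(d^{1/4}/\sqrt n)$.

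\textbf{The truncation is unnecessary.} The step you flag as the main obstacle dissolves under the standing assumption $d/n\to0$. Deterministically $\|E_\varepsilon\|_{\mathrm{op}}\le p-1$, so uniformly over the entire support of the prior:
\begin{itemize}
\item $\|\delta E_\varepsilon\|_{\mathrm{op}}\lesssim c_n'\sqrt{p/n}\to0$;
\item $\|A\|_{\mathrm{op}}\le\delta^2p^2\asymp c_n'^2\sqrt d/n$;
\item $n\|A\|_{\mathrm{HS}}^2\le n\delta^4p^4\asymp c_n'^4 d/n\to0$.
\end{itemize}
You can therefore drop both the Wigner-norm truncation and the conditioning. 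As written, the conditioning is also slightly imprecise. What you actually need is to bound the chi-square of the conditioned mixture by the unconditioned (nonnegative) integrand divided by $\mathbb P(\mathcal E)^2$; a total-variation comparison between the truncated and untruncated priors is not what does the work. The truncation does buy one small thing: on that event the remainder is $O(c_n'^4p/n)$, so that version needs only $p/n\to0$ rather than $d/n\to0$.
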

Proposition~\ref{prop:kendall-minimax} provides the complementary minimax lower bound. Indeed,
the calculations in its proof verify that the
conditions of Proposition~\ref{prop: power} hold uniformly for Kendall's tau over the model class under
consideration. 
Consequently, Proposition~\ref{prop: power} implies that, for every sequence
$M_n\to\infty$, the power of the bootstrap Kendall's tau test goes to 1. 
Therefore, combining Proposition~\ref{prop: power} and Proposition~ \ref{prop:kendall-minimax}, we can conclude that $(p/n)^{1/2}$
is the minimax separation rate, up to multiplicative constants, for dense
alternatives in the vector of pairwise Kendall's tau coefficients. 

Proposition \ref{prop: power} applies to both $d$-dimensional and Hilbert-valued $U$-statistics, which find applications to statistical inference problems described in the following.

\subsection{Spearman's rank correlation}
Let $X_1, X_2, \cdots, X_n$ be i.i.d random vectors having continuous marginals in $\R^p$. We denote the element-wise ranks of $X_{iu}$ and $X_{iv}$ as $R_i^u$ and $R_i^v$. Then the Spearman's correlation coefficient (Spearman's rho) is defined as
\begin{align}\label{ex: sprho}
    \rho_{uv} = \frac{\sum_{i=1}^n(R_i^u - (n+1)/2)(R_i^v - (n+1)/2)}
    {\{\sum_{i=1}^n(R_i^u - (n+1)/2)^2 \sum_{i=1}^n(R_i^v - (n+1)/2)^2\}^{1/2}}.
\end{align}
Although \eqref{ex: sprho} in itself is not a $U$-statistic, it can be represented as a $U$-statistic in the following manner. For all $u,v \in\{1,2, \dots, p\}$ such that $u \neq v,$ we write
$$
\rho_{uv} = \dfrac{n-2}{n+1}\hat{\rho}_{uv} + \frac{3}{n+1}\hat{\tau}_{uv},
$$
where $$\hat{\rho}_{uv} = \frac{3}{n(n-1)(n-2)} \sum_{i \neq j \neq k}  sign \{(X_{iu} - X_{ju}) (X_{iv} - X_{kv})\},  $$ 
is a $U$-statistic of order $3$,
and 
$$\hat{\tau}_{uv} = \frac{2}{n(n-1)}\sum_{1 \le i \neq j \le n} sign\{(X_{iu}-X_{ju})(X_{iv}-X_{jv})\},$$
is the Kendall's tau $U$-statistic of order $2$ and $sign(x) = x/|x|$ for any $x \neq 0$ and $sign(0) = 0$. We exclude the diagonal elements as they take deterministic unit values. Define $\rho_n, \hat{\rho}_n$ and $\hat\tau_n$ as the centered vectorized versions of the strictly upper-triangle of the matrices $\sqrt{n}(\rho_{uv})^{p\times p}, \sqrt{n}(\hat\rho_{uv})^{p\times p},$ and $\sqrt{n}(\hat\tau_{uv})^{p\times p}$ respectively. Further, let \(Z_n\sim N(0,\Gamma_n)\), where \(\Gamma_n\) is the Hájek covariance of the order-three Spearman kernel. Then we have the following proposition.

\begin{proposition}\label{prop:kendall}
    Under assumptions \ref{B1}--\ref{B2}, \[d_{\mathrm{Kol}}(\|\rho_n\|,\|Z_n\|) \lesssim \Delta_{n,d},\] where $d=p(p-1)/2.$
\end{proposition}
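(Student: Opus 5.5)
Our plan is to treat $\rho_n$ as a small perturbation of the order-three $U$-statistic $\hat\rho_n$, apply Proposition~\ref{prop: GA} to $\hat\rho_n$, and then transfer the bound to $\rho_n$ using a perturbation estimate combined with Gaussian anti-concentration. The order-three kernel is
\[
\widetilde h_{uv}(x,y,z)=\operatorname{sign}\{(x_u-y_u)(x_v-z_v)\}
\]
(symmetrized over permutations). Its coordinates are bounded by one, so Condition~\ref{B1} holds trivially, and Condition~\ref{B2} is assumed for the H\'ajek covariance $\Gamma_n$ of this kernel. Proposition~\ref{prop: GA} with $r=3$ then gives
\[
d_{\mathrm{Kol}}\bigl(\|\hat\rho_n^{\circ}\|,\|Z_n\|\bigr)\lesssim \Delta_{n,d},
\]
where $\hat\rho_n^{\circ}$ denotes the suitably normalized centered version. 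The normalization $r=3$ enters only as a constant, which we absorb into the definitions of $\rho_n$ and $Z_n$.

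Next we decompose the difference. Using $\rho_{uv}=\frac{n-2}{n+1}\hat\rho_{uv}+\frac{3}{n+1}\hat\tau_{uv}$, write
\[
\rho_n-\hat\rho_n=-\frac{3}{n+1}\hat\rho_n+\frac{3}{n+1}\hat\tau_n+(\text{centering differences}).
\]
The centering differences are deterministic and of size $\sqrt n\cdot O(1/n)$ per coordinate. Since every coordinate of $\hat\rho_n/\sqrt n$ and $\hat\tau_n/\sqrt n$ is bounded by one, this gives the deterministic bound
\[
\|\rho_n-\hat\rho_n\|\le C\sqrt{d}\,\sqrt n/n=C\sqrt{d/n}=:\epsilon_n.
\]

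For the transfer step, the triangle inequality gives, for every $a$,
\[
\Prob(\|\hat\rho_n\|\le a-\epsilon_n)\le \Prob(\|\rho_n\|\le a)\le \Prob(\|\hat\rho_n\|\le a+\epsilon_n).
\]
Therefore
\[
d_{\mathrm{Kol}}(\|\rho_n\|,\|Z_n\|)\le d_{\mathrm{Kol}}(\|\hat\rho_n\|,\|Z_n\|)+\sup_a\Prob\bigl(a<\|Z_n\|\le a+\epsilon_n\bigr).
\]
The last term is controlled by the Gaussian-norm anti-concentration inequality used in the proof of Theorem~\ref{thm: GA}, which behaves like $\epsilon_n\|\Gamma_n\|_{\mathrm{F}}^{-1/2}\ell_n^{-c}$. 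Alternatively, we can work on the squared scale: $\|Z_n\|^2$ has density bounded by $C\ell_n^{-1/2}/\|\Gamma_n\|_{\mathrm F}$, and $\|\rho_n\|^2-\|\hat\rho_n\|^2$ is controlled by $\epsilon_n(2\|\hat\rho_n\|+\epsilon_n)$ on a high-probability event.

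The main obstacle is showing that this perturbation term is dominated by $\Delta_{n,d}$. A crude bound yields something like $\{d^{1/2-q_2/2}n^{-1/2}\ell_n^{-c}\}$ raised to some power. We intend to compare it with the third component, $(d^{3+q_1-4q_2}/(n\ell_n))^{1/6}$. Because $q_2\le1$ and $q_1\ge0$, the relevant exponents of $d$ should be ordered correctly. If this comparison fails in some regime, we would fall back on a random (rather than deterministic) bound. Under the null centering, $\E\|\hat\rho_n-\hat\tau_n\|^2\lesssim \tr(\Gamma_n)+\tr(\Gamma_{n}^{\tau})\lesssim d$, so the perturbation is $O_p(\sqrt{d}/n)$. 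We would then combine this with Markov's inequality and the $\mathcal A_{3,n}$-type argument from Theorem~\ref{thm: GA}, which already handles an additive remainder of exactly this form through a term of order $\{\E\|\text{remainder}\|^2/\|\Gamma_n\|_{\mathrm F}\}^{1/2}$, multiplied by $\ell_n^{-1/8}$. Treating the perturbation as an extra $R_n$-type remainder in that argument should give a contribution at most $(d/(n\,d^{q_2}))^{1/2}\ell_n^{-1/8}$, which is dominated by $\Delta_{n,d}$.
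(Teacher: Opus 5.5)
Your fallback is the paper's proof. The paper never transfers a Kolmogorov bound. Instead it puts $\hat\rho_n$ inside the smoothing inequality \eqref{bd}, bounds $\sup_\eta|\E v_{\eta,\tau}(\rho_n)-\E v_{\eta,\tau}(\hat\rho_n)|$ by Lemma~\ref{lem:u_function_results}(iv) and Cauchy--Schwarz with $\E\|\hat\rho_n\|^2,\E\|\hat\tau_n\|^2\lesssim d$, and reruns the proof of Proposition~\ref{prop: GA} with this extra remainder. Your value $(d^{1-q_2}/n)^{1/2}\ell_n^{-1/8}$ for that contribution is right. However, it comes from the cross term $\{\E\|\rho_n-\hat\rho_n\|^2\,\E\|\hat\rho_n\|^2\}^{1/2}\lesssim d/n$, not from $\E\|\rho_n-\hat\rho_n\|^2\lesssim d/n^2$ alone as your formula suggests, and no Markov step is needed. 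Also, since all three vectors are centered at their own means, $\rho_n-\hat\rho_n=\frac{3}{n+1}(\hat\tau_n-\hat\rho_n)$ exactly, so there are no extra centering differences.

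Your main route is genuinely different: it uses Proposition~\ref{prop: GA} as a black box plus the deterministic bound $\|\rho_n-\hat\rho_n\|\le\epsilon_n\asymp\sqrt{d/n}$. It can be completed, but the anti-concentration step as you first state it is false.

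\begin{itemize}
\item Lemma~\ref{lem: AC} concerns $\|Z_n\|^2$. The event $\{a<\|Z_n\|\le a+\epsilon_n\}$ is a squared-scale window of width $2a\epsilon_n+\epsilon_n^2$, so no uniform-in-$a$ bound of the form $\epsilon_n\|\Gamma_n\|_{\mathrm F}^{-1/2}\ell_n^{-c}$ follows.
\item In the bulk, $a\asymp(\tr\Gamma_n)^{1/2}$, the correct size is $\epsilon_n(\tr\Gamma_n)^{1/2}\aleph(\Gamma_n)$. This exceeds your expression by the factor $(\tr\Gamma_n/\|\Gamma_n\|_{\mathrm F})^{1/2}$.
\item Concretely, for $\Gamma_n=I_d$ the norm $\|Z_n\|$ has density of order one near $\sqrt d$. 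The window therefore has probability $\asymp\epsilon_n$, not $\epsilon_n d^{-1/4}$.
\end{itemize}

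So the quantity $d^{1/2-q_2/2}n^{-1/2}$ you planned to compare with $\Delta_{n,d}$ is the wrong one.

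Your squared-scale alternative is the right repair once the event is made explicit. On $\{\|\hat\rho_n\|\le M\}$ one has $|\|\rho_n\|^2-\|\hat\rho_n\|^2|\le w:=\epsilon_n(2M+\epsilon_n)$, so
\[
d_{\mathrm{Kol}}(\|\rho_n\|,\|Z_n\|)\le d_{\mathrm{Kol}}(\|\hat\rho_n\|,\|Z_n\|)+C\aleph(\Gamma_n)\,w+\frac{\E\|\hat\rho_n\|^2}{M^2}.
\]
Use $\aleph(\Gamma_n)=\|\Gamma_n\|_{\mathrm F}^{-1}\ell_n^{-1/4}\asymp d^{-q_2}\ell_n^{-1/4}$ (exponent $1/4$, not $1/2$) and $\E\|\hat\rho_n\|^2\lesssim d$. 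Optimizing over $M$ gives an additional term $\lesssim d^{(2-2q_2)/3}n^{-1/3}\ell_n^{-1/6}\le\mathcal{T}_1^{4/3}$, where $\mathcal{T}_1=(d^{2-2q_2}/(n\ell_n^{3/4}))^{1/4}$ is the first entry of $\Delta_{n,d}$. This is dominated whenever $\Delta_{n,d}\le 1$, and the claim is trivial otherwise. So the domination comes from the first entry of $\Delta_{n,d}$, not the third.

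The two routes buy different things. The paper's route needs only second moments of the perturbation and no truncation, but it must reopen the proof of Theorem~\ref{thm: GA} and reoptimize the smoothing parameter with the extra remainder. Your route keeps Proposition~\ref{prop: GA} sealed and uses only the boundedness of the sign kernels, at the price of the truncation and tail step above.
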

\begin{proof}
    This proposition is a slight modification of the proof of Theorem~\ref{thm: GA}. Following the derivations in \eqref{bd}, for any $a>0,$
\begin{equation}
    \begin{split}
    d_{\mathrm{Kol}}(\|\rho_n\|,\|Z_n\|)
    &\le \sup_{\eta \in \R}|\E [v_{\eta,a}(\rho_n)  -v_{\eta,a}(Z_n)]|  + \sup_{\eta \ge 0} \Prob(\eta-a < \|Z_n\|^2 \le \eta + a)\\
    &\le \underbrace{\sup_{\eta \in \R}|\E [v_{\eta,a}(\rho_n) -v_{\eta,a}(\hat\rho_n)]|}_{(I)} + \underbrace{\sup_{\eta \in \R}|\E[v_{\eta,a}(\hat\rho_n)- v_{\eta,a}(Z_n)]|}_{(II)} \\ & \qquad + \underbrace{\sup_{\eta \ge 0} \Prob(\eta -a < \|Z_n\|^2 \le \eta + a)}_{(III)},
    \end{split}
\end{equation}
then Lemma~\ref{lem:u_function_results}~(iv) combined with Cauchy-Schwarz inequality yields,
\begin{equation} 
\begin{split}
(I) &\lesssim a^{-1} \left(\E\left[n^{-1}\|\hat\tau_n\|^2 + n^{-1/2}\|\hat\tau_n\|\|\hat\rho_n\|\right]\right) \\
&\le a^{-1} \Big\{ n^{-1} \Exp [\|\hat\tau_n\|^2] + n^{-1/2} \Big(\E[\|\hat\tau_n\|^2] \E[\|\hat\rho_n\|^2]\Big)^{1/2} \Big\},
\end{split}
\end{equation}
and by considering H\'ajek projection for the scaled $U$-statistics $\hat\rho_n$ and $\hat\tau_n$ together with Lemma~\ref{lem: boundlrRd} it follows that $\E\|\hat\rho_n\|^2 \lesssim d$ and $\E\|\hat\tau_n\|^2 \lesssim d$. Then the proposition follows by directly following the proof of Proposition~\ref{prop: GA}.
\end{proof}

Both $\rho_{uv}$ and $\hat{\tau}_{uv}$ have been studied in the existing literature under max-type test statistics in \cite{han2017distribution}, where the limiting distribution is Gumbel and its multiplier bootstrap version can be found in  \cite{chen2019randomized}. Under the assumption that the components of the observed random vector are independent, \cite{li2021central} studied the Marchenko-Pastur limit of Kendall's tau correlation matrix. Proposition \ref{prop:kendall} and Theorem \ref{thm: GA} allow us to consider the Frobenius norm-based inference for both Spearman's correlation and Kendall's tau in high dimensions. 

\subsection{Kendall's tau for elliptical distribution}

Let $X_1, X_2, \cdots, X_T \in \R^{p \times q}$ be $T$ independent and identically distributed data points of a random matrix $X \sim E_{p,q} (M, \Sigma \otimes \Omega, \gamma),$ where $\Sigma$ and $\Omega$ are positive semi-definite matrices with $rank(\Sigma) = m,$ $rank(\Omega) = n$ and $\gamma: [0,\infty) \to \R$. For more details on elliptical distributions, see \cite{gupta2018matrix}. Recently, \cite{he2025new} proposed robust estimators based on Kendall's tau for analyzing heavy-tailed high-dimensional data. The estimators considered there are of the form 
\begin{align}
    K_r = \frac{2}{T(T-1)} \sum_{t < t'} \frac{(X_t - X_{t'})(X_t - X_{t'})^\top}{\|X_t - X_{t'}\|_F^2}, \label{kr} \\
    K_c = \frac{2}{T(T-1)} \sum_{t < t'} \frac{(X_t - X_{t'})^\top(X_t - X_{t'})}{\|X_t - X_{t'}\|_F^2} .\label{kc}
\end{align}
Both \eqref{kr} and \eqref{kc} are shown to be robust against heavy-tailed distributions and serve as an alternative to Tyler's M-estimator \cite{tyler1987distribution} to assess the scatter in high dimensions; see \cite{he2025new} for more details. 

 \subsection{Maximum mean discrepancy (MMD) estimator}

Let $(\mathcal X,\mathcal B)$ be a measurable space and let
$k:\mathcal X\times\mathcal X\to\mathbb R$ be a measurable positive definite kernel with reproducing kernel Hilbert space (RKHS) $\h$. Define the feature map
\[ \varphi(x):=k(\cdot,x)\in\h. \]
Let
\[ X_{11},\ldots,X_{1n}\stackrel{i.i.d.}{\sim}F_1, \qquad X_{21},\ldots,X_{2n}\stackrel{i.i.d.}{\sim}F_2, \]
be independent, and define
\[ Z_i:=(X_{1i},X_{2i}), \qquad i=1,\ldots,n. \]
Set
\[ g(Z_i):=\varphi(X_{1i})-\varphi(X_{2i})\in\h, \]
and define the symmetric kernel
\[ h(z_1,z_2):=\frac12\big(g(z_1)+g(z_2)\big). \]
Consider the second-order Hilbert-valued $U$-statistic
\[ U_n := \frac1{n(n-1)} \sum_{1\le i\neq j\le n} h(Z_i,Z_j). \]
A direct calculation yields
\[ U_n = \frac1n\sum_{i=1}^n \big(\varphi(X_{1i}) - \varphi(X_{2i})\big) =:\hat\mu_1-\hat\mu_2, \]
where $\hat\mu_1$ and $\hat\mu_2$ denote the empirical kernel mean embeddings. Therefore,
\[ \widehat{\mathrm{MMD}}_n := \|\hat\mu_1  -\hat\mu_2\| = \|U_n\|. \]
Although the feature map $\varphi$ need not be explicitly computable, the statistic is computable entirely through the kernel trick
\[ \|U_n\| = \frac1{n} \bigg[\sum_{i,j=1}^n \Big\{ k(X_{1i},X_{1j}) +k(X_{2i},X_{2j}) -2k(X_{1i},X_{2j}) \Big\}\bigg]^{1/2}. \]
The first-order Hoeffding projection is
\[ h_1(z) := \mathbb E\{h(z,Z_{2n})\}-\theta = \frac12\big(g(z)-\theta\big), \]
where $\theta:=\mathbb E h(Z_{1n},Z_{2n})=\mathbb E g(Z_{1n}).$ Hence the kernel is nondegenerate whenever \(E\|g(Z_{1n})-Eg(Z_{1n})\|^2>0.\)
In particular, under $H_0:F_1=F_2$, we have $\theta=0$, but the Hilbert-valued $U$-statistic remains nondegenerate unless the feature map is almost surely constant. This contrasts with the usual unbiased estimator of $\mathrm{MMD}^2$ \cite{gretton2012kernel,zhang2022two}, which is a degenerate scalar $U$-statistic under the null.

\subsection{Mean and covariance operators in Hilbert space} 

Let $X_1, X_2, \cdots, X_n$ be random elements in $\h$ with finite eighth moment. The sample mean (a.k.a. Fr\'echet mean) can be represented as a second order $U$-statistic as 
\begin{align*}
    \bar{X}_n = \frac{1}{n(n-1)} \sum_{1 \le i \neq j \le n} \frac{(X_i + X_j)}{2}.
\end{align*}
Accordingly, the empirical covariance operator using Fr\'echet means can be expressed as 
\begin{align}\label{Frechetcov}
    \hat\Sigma_n = \frac{1}{n(n-1)} \sum_{1 \le i \neq j \le n} \frac{(X_i - X_j)\otimes(X_i - X_j)}{2},
\end{align}
which takes values in the Hilbert space of Hilbert–Schmidt operators on $\h$.
For the metric induced by the Hilbert-space norm, the Fr\'echet mean coincides with the Bochner mean. The corresponding sample mean and sample covariance operator both admit representations as Hilbert-valued $U$-statistics. Multiple works on the Fr\'echet mean for functional datasets have derived asymptotic normality-type results for these quantities in Hilbert spaces. 
We refer the readers to \cite{kraus2012dispersion}, \cite{dubey2019frechet} and references therein. The covariance operator in \eqref{Frechetcov} has been used for testing the equality of the covariance structures of two functional samples in \cite{panaretos2010second}, \cite{boente2011testing}, \cite{fremdt2013testing}.

\subsection{Wilcoxon-Mann-Whitney type test for Hilbert valued observations}

For a location-shift model with $X, X^{'}, Y \in \h$ , let $Y \stackrel{d}{=} X^{'} + \Delta$, where $X^{'}$ is an independent copy of $X$. Consider testing
\begin{align}\label{onesampmed}
    H_0: \Delta = 0 \quad \text{vs} \quad H_a: \Delta \neq 0.
\end{align}
Define $\mu(\Delta) = E[S(Y-X)] = E[S(X^{'} - X + \Delta)]$, where $S(x) = \frac{x}{\|x\|}$ denotes the spatial sign of $x$, with the convention $S(0) = 0$. Because $X^{'} - X$ is centrally symmetric, $\mu(0) = 0$. If the distribution of $X^{'} - X$ is non-atomic and not concentrated on a line, then $\mu(\Delta) \neq 0$ whenever $\Delta \neq 0$. Hence testing $\Delta = 0$ is equivalent to testing $\mu(\Delta) = 0 $ within this location shift model. 
For functional data with $x \in \h$, this spatial median has been studied in \cite{gervini2008robust, chakraborty2015wilcoxon}. 

 To make this precise, let $X_1, X_2, \cdots, X_m$ and $Y_1, Y_2, \cdots, Y_n$ be i.i.d observations from probability measures $F_1$ and $F_2$ in $\h$, where $Y \stackrel{d} = X + \Delta$.
To test \eqref{onesampmed}, one can use Wilcoxon-Mann-Whitney type statistic $W_{m,n} = \frac{1}{mn}\sum_{i=1}^m \sum_{j=1}^n \frac{(Y_j - X_i)}{\|Y_j - X_i\|},$ which is unbiased for $\mu(\Delta)$. $W_{m,n}$ can be expressed as a one-sample $U$-statistic, and therefore the results of our paper can be directly utilized. To proceed, construct a pooled data set $Z_{1n}, Z_{2n}, \cdots, Z_{m+n}$. Next, we define $L_i = 1$ for $i = 1,2,\cdots,m$ and $L_i = 0$ for $i = m+1, m+2, \cdots, m+n$ and the kernel 
\begin{align*}
    h((Z_i, L_i), (Z_j, L_j)) = 1( L_i =1, L_j = 0) \frac{Z_j - Z_i}{\|Z_j - Z_i\|} + 1( L_i = 0, L_j = 1) \frac{Z_i - Z_j}{\|Z_j - Z_i\|}.
\end{align*}
The kernel $h((Z_i, L_i), (Z_j, L_j))$ is a symmetric kernel on pairs $\{(Z_i, L_i)\}_{i=1}^{m+n},$ which are independent but not identical, and therefore
\begin{align*}
    U_{m,n} = \frac{1}{\binom{m+n}{ 2}}\sum_{i < j}h((Z_i,L_i), (Z_j, L_j)). 
\end{align*}
For the two comparable sample sizes of $m$ and $n$, $\binom{m+n}{ 2}/mn = O(1),$ $W_{m,n}$ and $U_{m,n}$ are equivalent up to a constant factor. 

\bibliographystyle{abbrvnat}
\bibliography{references}

\newpage
\appendix

\section{Appendix: Preliminary lemmas} 
This section collects preliminary lemmas that will be used in the proofs of Theorems \ref{thm: GA} and Propositions \ref{prop: HGA} and \ref{prop: GA}.

\begin{lemma}
\label{lem:u_function_results}
The following holds.
    \begin{enumerate}[label=(\roman*)]
        \item There exists a $C^\infty$-function $u:\R \to [0,1]$ such that  $u(x) = 1$ for  $x < 0$ and $u(x) = 0$ for $x > 1$.
        \item Define for any $a\in \R$ and $\tau>0,$ $v_{\eta,\tau}(x) := u(\tau^{-1}(\|x\|^2 - \eta))$ on $\h$, where $u(.)$ is as in Part (i). Then, for any random variable $X \in \h$, 
        \[
        \Prob(\|X\|^2 \leq a^2) \leq \Exp[v_{a^2,\tau}(X)] \leq \Prob(\|X\|^2 \leq a^2 + \tau), \quad \forall a \in \R, \tau > 0.
        \]
        \item For $v_{\eta, \tau}$ as defined in Part (ii), we have \[\nabla v_{\eta,\tau}(x) = 2\tau^{-1} u'(\tau^{-1}(\|x\|^2 - \eta)) x, \;x\in \h,\] and \[\nabla^2 v_{\eta,\tau}(x) = 4 \tau^{-2} u''(\tau^{-1}(\|x\|^2 - \eta)) (x \otimes x) + 2 \tau^{-1} u'(\tau^{-1}(\|x\|^2 - \eta)) {{\rm{I}}}, \; x\in \h, \] where $\nabla v_{\eta, \tau}$ and $\nabla^2 v_{\eta, \tau}$  denotes respectively the gradient and Hessian of the functional $v_{\eta, \tau}$, and ${\rm{I}}$ is the identity operator in $\h$.
        \item For $v_{\eta, \tau}$ as defined in Part (ii), 
        \[
        |v_{\eta, \tau}(x) - v_{\eta, \tau}(y)| \lesssim \tau^{-1} (\|x - y\|^2 + \|x\|\|x - y\|), \quad \forall x,y \in \h.
        \]
    \end{enumerate}
\end{lemma}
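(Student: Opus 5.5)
We prove the four parts in order; all of them are elementary.

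\emph{Part (i).} We construct $u$ explicitly from the standard smooth bump. Let $\phi(t)=\exp(-1/t)$ for $t>0$ and $\phi(t)=0$ for $t\le 0$; this function is $C^\infty$. Set
\[
u(x)=\frac{\phi(1-x)}{\phi(1-x)+\phi(x)}.
\]
The denominator never vanishes, since for every $x$ at least one of $x$ and $1-x$ is positive. So $u$ is $C^\infty$ and takes values in $[0,1]$. For $x<0$ we have $\phi(x)=0$, hence $u(x)=1$; for $x>1$ we have $\phi(1-x)=0$, hence $u(x)=0$. Every derivative of $u$ is bounded, because each is continuous and vanishes outside $[0,1]$. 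Set $\|u'\|_\infty,\|u''\|_\infty\le C$.

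\emph{Part (ii).} This is a pointwise sandwich, after which we take expectations. Put $\eta=a^2$. If $\|x\|^2\le a^2$, the argument of $u$ is $\le 0$. The displayed property only gives $u=1$ for strictly negative arguments, but continuity gives $u(0)=1$ as well, so $v_{a^2,\tau}(x)=1$. This yields $\ind\{\|x\|^2\le a^2\}\le v_{a^2,\tau}(x)$. If $\|x\|^2>a^2+\tau$, the argument exceeds $1$, so $v_{a^2,\tau}(x)=0$. This yields $v_{a^2,\tau}(x)\le \ind\{\|x\|^2\le a^2+\tau\}$, using $0\le u\le 1$ on the remaining set. Taking expectations of both inequalities gives the claim.

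\emph{Part (iii).} We use the Fréchet chain rule. The map $x\mapsto \|x\|^2$ has derivative $2\langle x,\cdot\rangle$, so its gradient is $2x$, and its second derivative is $2{\rm I}$. Composing with the scalar function $s\mapsto u(\tau^{-1}(s-\eta))$, which has first derivative $\tau^{-1}u'$ and second derivative $\tau^{-2}u''$, we get:
\begin{itemize}
\item the first derivative $\tau^{-1}u'(\cdot)\,2x$;
\item the second derivative $\tau^{-2}u''(\cdot)(2x)\otimes(2x)+\tau^{-1}u'(\cdot)\,2{\rm I}$.
\end{itemize}
These are exactly the stated gradient and Hessian.

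\emph{Part (iv).} Since $u$ is Lipschitz with constant $\|u'\|_\infty$,
\[
|v_{\eta,\tau}(x)-v_{\eta,\tau}(y)|\le \|u'\|_\infty\,\tau^{-1}\big|\|x\|^2-\|y\|^2\big|.
\]
Write $y=x-\delta$ with $\delta=x-y$. Then
\[
\|x\|^2-\|y\|^2=2\langle x,\delta\rangle-\|\delta\|^2,
\]
so by Cauchy--Schwarz its absolute value is at most $2\|x\|\|x-y\|+\|x-y\|^2$. Combining the two bounds gives the claim.

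The only point needing care is the boundary case $u(0)=1$ in Part (ii), which is handled by continuity. Nothing else is a real obstacle.
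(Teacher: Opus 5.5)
Your proof is correct and follows the paper's argument: the same bump-function construction in (i), the same pointwise sandwich $\ind\{\|x\|^2\le a^2\}\le v_{a^2,\tau}(x)\le \ind\{\|x\|^2\le a^2+\tau\}$ in (ii) (you also check $u(0)=1$, which the paper leaves implicit), and the Fr\'echet chain rule in (iii). The only difference is in (iv): you combine the scalar Lipschitz bound on $u$ with the expansion $\|x\|^2-\|y\|^2=2\langle x,x-y\rangle-\|x-y\|^2$, whereas the paper integrates $\nabla v_{\eta,\tau}$ along the segment from $x$ to $y$; both give the same bound, and yours does not need part (iii).
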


\begin{proof}
For claim (i), set $\bar{u}(x) = e^{-1/x}$ for $x > 0$ and $\bar{u}(x) = 0$ for $x \leq 0$. A desired function is given by
\[
u(x) = \frac{\bar{u}(1-x)}{\bar{u}(x) + \bar{u}(1-x)}.
\]

Claim (ii) follows from 
\[
\ind_{\{\|x\|^2 \leq a^2 + \tau\}} \geq  v_{a^2, \tau}(x) \geq \ind_{\{\|x\|^2 \leq a^2\}}.
\]

For (iii) we compute the Fr\'echet derivative. Let
\[
\phi(x) := \tau^{-1}(\|x\|^2 - \eta), \qquad x \in \h.
\]
Then $\phi \in C^\infty(\h)$ with derivative 
\[
\nabla\phi(x)(h) = \frac{2}{\tau} \langle x, h \rangle, \quad \text{for all}\,\, h \in \h.
\]
By the chain rule in Hilbert spaces,
\[
\nabla v_{\eta,\tau}(x)(h) 
= u'(\phi(x))\,\nabla\phi(x)(h) 
= \frac{2}{\tau}\,u'\!\left(\tau^{-1}(\|x\|^2 - \eta)\right) \langle x, h \rangle.
\]
Identifying the Riesz representative, the gradient is
\[
\nabla v_{\eta,\tau}(x) 
= \frac{2}{\tau}\,u'\!\left(\tau^{-1}(\|x\|^2 - \eta)\right) x \in \h.
\]
$\nabla^2 v_{\eta,\tau}(x)$ can be computed in a similar manner and we omit the details for the interest of space. 

Finally, for (iv), by the fundamental theorem of calculus along the segment $x_\theta := x + \theta(y - x)$, $\theta\in[0,1]$, we have
\[
v_{\eta,\tau}(y) - v_{\eta,\tau}(x) 
= \int_0^1 \langle \nabla v_{\eta,\tau}(x_\theta),\, y - x \rangle \, dt.
\]
Hence, using $\|u'\|_\infty < \infty$,
\begin{align*}
|v_{\eta,\tau}(y) - v_{\eta,\tau}(x)| 
&\le \frac{2\|u'\|_\infty}{\tau} \int_0^1 \|x_\theta\|\,\|y - x\| \, dt \\
&\le \frac{2\|u'\|_\infty}{\tau} \, (\|x\| + \|y - x\|) \, \|y - x\| \\
&\lesssim \tau^{-1} \big( \|x\|\,\|x - y\| + \|x - y\|^2 \big),
\end{align*}
which is the desired bound.

\end{proof}

\begin{lemma}[Gaussian approximation for smooth functions]
\label{lem: stein}
Let $\xi_1,\dots,\xi_n$ be independent random elements in $\h$ with mean zero and $\E\|\xi_i\|^4 < \infty$ for all $i = 1,2, \cdots,n$ . Let $\Sigma_n = \sum_{i=1}^n\E[\xi_i\otimes\xi_i]$, $S_n = \sum_{i=1}^n \xi_i$, and $Z_n \sim N_{\h}(0,\Sigma_n)$ that is assumed to be independent of $\xi_i$'s. For a smooth function $v_{\eta,\tau}$ given in Lemma \ref{lem:u_function_results}, we have
\[
\begin{split}
|\E v_{\eta,\tau}(S_n) - \E v_{\eta,\tau}(Z_n)| &\lesssim \tau^{-3}\left (\|\Sigma_n\|_\hs \sum_{i=1}^n\E[\|\xi_i\|^4]+\|\Sigma_n\|_\opr^{3/2}\sum_{i=1}^n\E[\|\xi_i\|^3] \right) \\
&\quad +\tau^{-2}\left ( \| \Sigma_n \|_\opr^{1/2}\sum_{i=1}^n\E[\|\xi_i\|^3] + \sum_{i=1}^n\E[\|\xi_i\|^4]\right).
\end{split}
\]
\end{lemma}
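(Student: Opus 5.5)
We prove the bound in Lemma~\ref{lem: stein} by Stein's method in $\h$. The Stein solution comes from the Ornstein--Uhlenbeck semigroup, and the remaining error is handled by a Lindeberg-type leave-one-out Taylor expansion. Set $f=v_{\eta,\tau}$ and define
\[
\psi(x)=-\int_0^\infty \big\{\E f(e^{-t}x+\sqrt{1-e^{-2t}}Z_n)-\E f(Z_n)\big\}\,dt .
\]
This $\psi$ solves the Stein equation $\tr(\Sigma_n\nabla^2\psi(x))-\langle x,\nabla\psi(x)\rangle=f(x)-\E f(Z_n)$. Hence
\[
\E f(S_n)-\E f(Z_n)=\E\Big[\tr(\Sigma_n\nabla^2\psi(S_n))-\sum_i\langle\xi_i,\nabla\psi(S_n)\rangle\Big].
\]
With $S^{(i)}=S_n-\xi_i$, independent of $\xi_i$, I expand $\langle \xi_i,\nabla\psi(S_n)\rangle$ around $S^{(i)}$ to third order. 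The zeroth-order term vanishes because $\E\xi_i=0$. The first-order term, $\E\langle \nabla^2\psi(S^{(i)})\xi_i,\xi_i\rangle=\E\tr(\E[\xi_i\otimes\xi_i]\nabla^2\psi(S^{(i)}))$, cancels against $\tr(\Sigma_n\nabla^2\psi(S_n))$ up to the replacement $S_n\leftrightarrow S^{(i)}$. That replacement costs one further derivative times $\|\xi_i\|$. What is left are terms of the form $\E[\nabla^3\psi(\cdot)[\xi_i,\xi_i,\xi_i]]$, together with mixed terms $\E\|\xi_i\|^2\,\E\|\xi_i\|\cdot\|\nabla^3\psi\|$. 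The latter are bounded by third moments via Lyapunov's inequality.

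The key step is to bound the derivatives of $\psi$ in directional form rather than with dimension-dependent norms. Differentiating under the integral gives $\nabla^k\psi(x)=-\int_0^\infty e^{-kt}\,\E\,\nabla^k f(e^{-t}x+\sqrt{1-e^{-2t}}Z_n)\,dt$. Lemma~\ref{lem:u_function_results}(iii), and its analogue for the third derivative, give
\[
\nabla^3 f(y)[h,h,h]=8\tau^{-3}u'''(\cdot)\langle y,h\rangle^3+12\tau^{-2}u''(\cdot)\langle y,h\rangle\|h\|^2 .
\]
Evaluated at $y=e^{-t}w+\sqrt{1-e^{-2t}}Z_n$, where $w$ is a point on the segment between $S^{(i)}$ and $S_n$, the factor $\langle y,\xi_i\rangle^3$ splits into contributions from $w$ and from $Z_n$. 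By independence and Gaussianity, $\E\langle Z_n,h\rangle^3=0$ and $\E\langle Z_n,h\rangle^2\le\|\Sigma_n\|_\opr\|h\|^2$. This is the source of the $\|\Sigma_n\|_\opr^{3/2}\E\|\xi_i\|^3$ and $\|\Sigma_n\|_\opr^{1/2}\E\|\xi_i\|^3$ terms, with $\tau^{-3}$ and $\tau^{-2}$ prefactors respectively.

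The contribution of $w$ requires more care, and this is the main obstacle. A naive bound involves $\|S^{(i)}\|$, and $\E\|S^{(i)}\|^2=\tr\Sigma_n$ is far too large. To avoid it, I exploit that $\langle S^{(i)},\xi_i\rangle$ is a sum of independent terms conditional on $\xi_i$. Its conditional second moment is $\langle\Sigma_n\xi_i,\xi_i\rangle\le\|\Sigma_n\|_\opr\|\xi_i\|^2$, and its fourth moment is of order $(\langle\Sigma_n\xi_i,\xi_i\rangle)^2+\sum_j\E\langle\xi_j,\xi_i\rangle^4$. Combining these through Cauchy--Schwarz should produce exactly the $\|\Sigma_n\|_\hs\sum\E\|\xi_i\|^4$ term. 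Specifically, the averaged quantity $\sum_i\E\langle\Sigma_n\xi_i,\xi_i\rangle\|\xi_i\|^2$ is controlled by $\|\Sigma_n\|_\hs$ when $\xi_i\otimes\xi_i$ is paired with $\Sigma_n$ in the Hilbert--Schmidt inner product. The contributions $\langle\xi_i,\xi_i\rangle$ coming from the segment point give the pure fourth-moment term $\tau^{-2}\sum\E\|\xi_i\|^4$. Finally, the time integrals $\int e^{-3t}\,dt$ are finite, so no smoothing factor such as $\log$ or extra powers of $\tau$ appears. If the $\|\Sigma_n\|_\hs$ bookkeeping fails, I would fall back on bounding $|u'''|\,|\langle w,\xi_i\rangle|^3$ by conditioning on $\xi_i$ and using Rosenthal's inequality for the scalar sum $\langle S^{(i)},\xi_i\rangle$.
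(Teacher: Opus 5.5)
Your overall architecture matches the paper's: the Ornstein--Uhlenbeck Stein solution, a leave-one-out expansion about $S^{(i)}$, and directional bounds via conditional moments of $\langle S^{(i)},\xi_i\rangle$. The gap is in expanding $\langle \xi_i,\nabla\psi(S_n)\rangle$ fully to third order and bounding $\nabla^3\psi(w)[\xi_i,\xi_i,\xi_i]$ at the segment point $w=S^{(i)}+\theta\xi_i$; this does not close under fourth moments. With $y=e^{-t}w+\sqrt{1-e^{-2t}}Z_n$ you have $\langle y,\xi_i\rangle=e^{-t}\langle S^{(i)},\xi_i\rangle+e^{-t}\theta\|\xi_i\|^2+\sqrt{1-e^{-2t}}\langle Z_n,\xi_i\rangle$. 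So the piece $8\tau^{-3}u'''(\cdot)\langle y,\xi_i\rangle^3$ contains $\tau^{-3}\theta^3e^{-3t}\|\xi_i\|^6$, plus cross terms such as $\tau^{-3}\|\xi_i\|^4\langle S^{(i)},\xi_i\rangle$. Because $u'''$ is evaluated at a random point, there is no cancellation. After conditioning on $\xi_i$, these terms are controlled only by $\tau^{-3}\E\|\xi_i\|^6$ and $\tau^{-3}\|\Sigma_n\|_{\mathrm{op}}^{1/2}\E\|\xi_i\|^5$. Neither appears in the lemma, and both can be infinite when only $\E\|\xi_i\|^4<\infty$ is assumed. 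Your claim that the segment-point contributions produce only $\tau^{-2}\sum_i\E\|\xi_i\|^4$ holds only for the $12\tau^{-2}u''\langle y,\xi_i\rangle\|\xi_i\|^2$ piece. The Rosenthal fallback does not help, because the obstruction is the shift $\theta\|\xi_i\|^2$, not the sum $\langle S^{(i)},\xi_i\rangle$.

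The missing idea is to differentiate $u''$ only where that is harmless; this is exactly the paper's decomposition $R_1=R_{11}+R_{12}+R_{13}+R_{14}$. Stop at the Hessian difference $\nabla^2\mathcal F_n(S_n^{(i)}+U\xi_i)-\nabla^2\mathcal F_n(S_n^{(i)})$ and insert the explicit Hessian from Lemma~\ref{lem:u_function_results}(iii). Then expand $\langle S_n^{(i),a}+\tilde\xi_i^a,\xi_i\rangle^2$, where $S_n^{(i),a}=\sqrt{1-a}\,S_n^{(i)}+\sqrt a\,Z_n$ and $\tilde\xi_i^a=\sqrt{1-a}\,U\xi_i$. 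Bound the pieces $(1-a)U^2\|\xi_i\|^4$ and $\sqrt{1-a}\,U\|\xi_i\|^2\langle\xi_i,S_n^{(i),a}\rangle$ using only $\|u''\|_\infty$; this gives $\tau^{-2}\E\|\xi_i\|^4$ and $\tau^{-2}\|\Sigma_n\|_{\mathrm{op}}^{1/2}\E\|\xi_i\|^3$. Apply the mean value theorem only to $u''$ on the piece multiplying $\langle\xi_i,S_n^{(i),a}\rangle^2$, and to $u'$ on the piece multiplying $\|\xi_i\|^2$. The worst quantity is then $\tau^{-3}\E[\langle\xi_i,S_n^{(i),a}\rangle^2(|\langle\xi_i,S_n^{(i),a}\rangle|+\|\xi_i\|^2)]$, which is at most fourth order in $\xi_i$. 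Your conditional second- and fourth-moment argument with Cauchy--Schwarz handles this term, in fact giving $\|\Sigma_n\|_{\mathrm{op}}\sum_i\E\|\xi_i\|^4\le\|\Sigma_n\|_{\mathrm{HS}}\sum_i\E\|\xi_i\|^4$.

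Two smaller points. First, the $S_n\leftrightarrow S^{(i)}$ replacement in the trace term must also be bounded directionally, by writing $\tr(\E[\xi_i\otimes\xi_i]\,\cdot)$ through an independent copy $\xi_i'$. Any bound through a norm of $\nabla^3\psi$ brings back $\|y\|$ and hence $\tr(\Sigma_n)$. Second, $\E\langle Z_n,h\rangle^3=0$ is of no use when $u'''$ is evaluated at a $Z_n$-dependent point; you need the absolute moment bound $\E|\langle Z_n,h\rangle|^3\lesssim\|\Sigma_n\|_{\mathrm{op}}^{3/2}\|h\|^3$.
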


\begin{proof}
The proof follows in a similar spirit to that of
\cite{fang2024large}. We establish the result in the Hilbert-space
setting by explicitly addressing the operator-theoretic issues arising
in infinite dimensions. 
Note that, $\Sigma_n$ is a positive self-adjoint trace-class
operator, since
$
\operatorname{tr}(\Sigma_n)
=
\sum_{i=1}^n \E\|\xi_i\|^2
<\infty.
$
Consequently, $Z_n\sim N_{\h}(0,\Sigma_n)$ is a well-defined Gaussian random
element in $\h$, and $\Sigma_n$ is also Hilbert--Schmidt.

The Ornstein-Uhlenbeck integral  
\begin{align}\label{OrnUhl}
    \mathcal{F}_n(s)=\int_0^1 -\frac{1}{2(1-a)}\,\E\!\big[v_{\eta,\tau}(\sqrt{1-a}\;s+\sqrt{a}\;Z_n)-\E v_{\eta,\tau}(Z_n)\big]\,da.
\end{align}
provides the solution to the Stein equation in the following. 
The Stein equation associated for the function $v_{\eta,\tau}$ and covariance operator $\Sigma_n$ is given as
\begin{align}\label{Steineq}
\operatorname{tr}\!\left(\Sigma_n\nabla^2\mathcal{F}_n(s)\right)
-
\la s,\nabla\mathcal{F}_n(s)\ra
=
v_{\eta,\tau}(s)-\E[v_{\eta,\tau}(Z_n)],
\end{align}
for $s \in \h.$ Here $\nabla \mathcal{F}_n(s)$ is the Fr\'echet gradient of $\mathcal{F}_n$ at $s$ and $\nabla^2 \mathcal{F}_n(s)$ is the Fr\'echet-Hessian which is a symmetric bounded operator on $\h$.
For fixed $\eta$ and $\tau$, $v_{\eta,\tau}$ has bounded Fr\'echet
derivatives of the required orders, so differentiation under the
integral sign is justified by dominated convergence.
Differentiating \eqref{OrnUhl} under the integral sign yields 
$$
\nabla^2 \mathcal{F}_n(s)=\int_0^1 -\frac{1}{2}\,\E\big[\nabla^2 v_{\eta,\tau}(\sqrt{1-a}\;s+\sqrt{a}\;Z_n)\big]\,da.  
$$
For the function $v_{\eta,\tau}(s) = u(\tau^{-1}(\|s\|^2 -\eta))$, from Lemma~\ref{lem:u_function_results}~(iii),  we get
$\nabla v_{\eta,\tau}(s) = 2\tau^{-1}u'(\tau^{-1}(\|s\|^2 -\eta)) s$
and 
\begin{equation}\label{hess-hilbert}
\nabla^2 v_{\eta,\tau}(s)
=4\tau^{-2} u''\!\big(\tau^{-1}(\|s\|^2-\eta)\big)\,(s\otimes s)
+2\tau^{-1} u'\!\big(\tau^{-1}(\|s\|^2-\eta)\big)\,{\rm{I}}.
\end{equation}
Let $S_n^{(i)}=S_n-\xi_i$. Then,
$
\E\la S_n,\nabla\mathcal{F}_n(S_n)\ra
=
\sum_{i=1}^n
\E\la \xi_i,\nabla\mathcal{F}_n(S_n)\ra.
$

A Fr\'echet Taylor expansion of the gradient about $S_n^{(i)}$ yields
\begin{align}\label{covS}
\E\la S_n,\nabla\mathcal{F}_n(S_n)\ra
&=
\sum_{i=1}^n
\E\la \xi_i,\nabla\mathcal{F}_n(S_n^{(i)})\ra
+
\sum_{i=1}^n
\E\la
\xi_i,\nabla^2\mathcal{F}_n(S_n^{(i)})\xi_i
\ra
+
R_1,
\end{align}
where
\begin{align}
R_1
=
\sum_{i=1}^n
\E\la
\xi_i,
\left\{
\nabla^2\mathcal{F}_n(S_n^{(i)}+U\xi_i)
-
\nabla^2\mathcal{F}_n(S_n^{(i)})
\right\}\xi_i
\ra,
\end{align}
and $U$ is uniform on $[0,1]$, independent of all other random
variables. Since $\xi_i$ and $S_n^{(i)}$ are independent and
$\E\xi_i=0$,
$
\sum_{i=1}^n
\E\la \xi_i,\nabla\mathcal{F}_n(S_n^{(i)})\ra
=0.
$
Hence,
\begin{align}
\E\la S_n,\nabla\mathcal{F}_n(S_n)\ra
=
\sum_{i=1}^n
\E\la
\xi_i,\nabla^2\mathcal{F}_n(S_n^{(i)})\xi_i
\ra
+
R_1.
\end{align}

On the other hand,
\begin{align}\label{covstein}
\E\operatorname{tr}\!\left(
\Sigma_n\nabla^2\mathcal{F}_n(S_n)
\right)
&=
\sum_{i=1}^n
\E\la
\xi_i,\nabla^2\mathcal{F}_n(S_n^{(i)})\xi_i
\ra
+
R_2,
\end{align}
where
\begin{align}
R_2
:=
\sum_{i=1}^n
\E\operatorname{tr}\!\left(
\E[\xi_i\otimes\xi_i]
\left\{
\nabla^2\mathcal{F}_n(S_n)
-
\nabla^2\mathcal{F}_n(S_n^{(i)})
\right\}
\right).
\end{align}
Combining \eqref{Steineq}, \eqref{covS}, and \eqref{covstein}, we get
\begin{align}\label{mainbound}
\left|
\E v_{\eta,\tau}(S_n)-\E v_{\eta,\tau}(Z_n)
\right|
\leq
|R_1|+|R_2|.
\end{align}
For $a\in[0,1]$, set
$
S_n^{(i),a}
:=
\sqrt{1-a}\,S_n^{(i)}+\sqrt{a}\,Z_n
$
and
$
\tilde{\xi}_i^a
:=
\sqrt{1-a}\,U\xi_i.
$
Similar to \cite{fang2024large}, we decompose $R_1$ as
$$
R_1 = R_{11} + R_{12} + R_{13} + R_{14},
$$
where 
\begin{align*}
R_{11}
&=-2\tau^{-2}\sum_{i=1}^n\int_0^1 (1-a)\,\E\big[U^2\,u''(\tau^{-1}(\|S_n^{(i),a}+\tilde\xi_i^a\|^2-\eta))\,
\|\xi_i\|^4\big]\,da,\\
R_{12}
&=-4\tau^{-2}\sum_{i=1}^n\int_0^1 \sqrt{1-a}\,\E\big[U\,u''(\tau^{-1}(\|S_n^{(i),a}+\tilde\xi_i^a\|^2-\eta))\,
\|\xi_i\|^2\langle \xi_i, S_n^{(i),a}\rangle\big]\,da,\\
R_{13}
&=-2\tau^{-2}\sum_{i=1}^n\int_0^1
\E\bigg[
\langle \xi_i,S_n^{(i),a}\rangle^2
\bigg\{
u''\!\left(
\tau^{-1}
\big(\|S_n^{(i),a}+\tilde\xi_i^a\|^2-\eta\big)
\right)
-
u''\!\left(
\tau^{-1}
\big(\|S_n^{(i),a}\|^2-\eta\big)
\right)
\bigg\}
\bigg]\,da,\\
R_{14}
&=-\tau^{-1}\sum_{i=1}^n\int_0^1
\E\bigg[
\|\xi_i\|^2
\bigg\{
u'\!\left(
\tau^{-1}
\big(\|S_n^{(i),a}+\tilde\xi_i^a\|^2-\eta\big)
\right)
-
u'\!\left(
\tau^{-1}
\big(\|S_n^{(i),a}\|^2-\eta\big)
\right)
\bigg\}
\bigg]\,da.
\end{align*}
Next, by noting the fact that $\E(U^2) = 1/3$ and $u''$ is bounded, we have
$$
|R_{11}| \le C \tau^{-2} \sum_{i=1}^n \E[\|\xi_i\|^4].
$$
 Application of Lemma 5.9 in \cite{fang2024large} along with the fact that $\|\Sigma_n\|_\opr \le \|\Sigma_n\|_\hs$ yields the following



\begin{align}
|R_{12}| & \le C \tau^{-2} \sum_{i=1}^n \E[\|\xi_i\|^3 \|\Sigma_n\|_{\opr}^{1/2}],\\
|R_{13}| & \le C \tau^{-3} \sum_{i=1}^n \E[\|\Sigma_n\|_\hs\|\xi_i\|^4 + \|\Sigma_n\|_\opr^{3/2} \|\xi_i\|^3],\\
|R_{14}| & \le C \tau^{-2} \sum_{i=1}^n \E[\|\Sigma_n\|_\opr^{1/2}\|\xi_i\|^3 + \|\xi_i\|^4].
\end{align}
Then, the bound for $R_1$ becomes,
\begin{equation}\label{boundR1}
    \begin{split}
        |R_1| & \le C \tau^{-3}\left (\|\Sigma_n\|_\hs\sum_{i=1}^n\E[\|\xi_i\|^4]+\|\Sigma_n\|_\opr^{3/2}\sum_{i=1}^n\E[\|\xi_i\|^3] \right) \\&\hspace{3cm} + C \tau^{-2}\left ( \| \Sigma_n \|_\opr^{1/2}\sum_{i=1}^n\E[\|\xi_i\|^3] + \sum_{i=1}^n\E[\|\xi_i\|^4]\right).
    \end{split}
\end{equation}
Using similar arguments as in $R_1$, it can be shown than
\begin{equation}\label{boundR2}
    \begin{split}
        |R_2| & \le C \tau^{-3}\left (\|\Sigma_n\|_\hs\sum_{i=1}^n\E[\|\xi_i\|^4]+\|\Sigma_n\|_\opr^{3/2}\sum_{i=1}^n\E[\|\xi_i\|^3] \right) \\&\hspace{3cm} + C \tau^{-2}\left ( \| \Sigma_n \|_\opr^{1/2}\sum_{i=1}^n\E[\|\xi_i\|^3] + \sum_{i=1}^n\E[\|\xi_i\|^4]\right).
    \end{split}
\end{equation}
Finally combining \eqref{boundR1}, \eqref{boundR2} and \eqref{mainbound}, the claim follows and hence completes the proof. 
\end{proof}

\begin{lemma}[Gaussian anticoncentration for balls; Theorem 2.7 in \cite{gotze2019large}]
\label{lem: AC}
Let $Z_n$ be a centered Gaussian element in $\h$ with covariance operator $\Sigma_n$ with at least two nonzero eigenvalues. Recall $\Lambda_k (\Sigma_n) = (\sum_{j=k}^d \lambda_{(j)}^2(\Sigma_n))^{1/2}$. Then
\[
\sup_{\eta \ge 0} \Prob (\eta < \| Z_n \|^2 \le \eta+\tau) \lesssim \aleph (\Sigma_n) \tau, \quad \forall \tau > 0, 
\]
where $\aleph (\Sigma_n) = (\Lambda_1 (\Sigma_n)\Lambda_2 (\Sigma_n))^{-1/2}$. 
\end{lemma}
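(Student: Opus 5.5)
The plan is to reduce the statement to a bound on the Lebesgue density of the weighted chi-square variable $\|Z_n\|^2$. Writing the Karhunen--Lo\`eve expansion $Z_n=\sum_j \sqrt{\lambda_j}\,g_j e_j$ with $\lambda_j=\lambda_{(j)}(\Sigma_n)$ and $g_j$ i.i.d.\ $N(0,1)$, we have $\|Z_n\|^2=\sum_j\lambda_j g_j^2$. Since at least two $\lambda_j$ are positive, this variable has a density $f$, and
\[
\sup_{\eta\ge0}\Prob(\eta<\|Z_n\|^2\le\eta+\tau)\le\tau\,\|f\|_\infty .
\]
So it suffices to show $\|f\|_\infty\lesssim(\Lambda_1\Lambda_2)^{-1/2}$, where $\Lambda_k=\Lambda_k(\Sigma_n)$. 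The characteristic function is explicit:
\[
|\varphi(t)|=\prod_j(1+4\lambda_j^2t^2)^{-1/4}.
\]

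Direct Fourier inversion $\|f\|_\infty\le(2\pi)^{-1}\int|\varphi|$ fails in general, because with only two nonzero eigenvalues $|\varphi(t)|\asymp |t|^{-1}$, which is not integrable. I will therefore use an $L^2$ convolution argument. Partition the indices into two disjoint sets $A,B$, and set $X=\sum_{j\in A}\lambda_j g_j^2$ and $Y=\sum_{j\in B}\lambda_j g_j^2$. These are independent, so $f=f_X*f_Y$, and by Cauchy--Schwarz and Plancherel,
\[
\|f\|_\infty\le\|f_X\|_2\|f_Y\|_2=\frac1{2\pi}\Big(\int|\varphi_X|^2\Big)^{1/2}\Big(\int|\varphi_Y|^2\Big)^{1/2}.
\]
Each $|\varphi_X|^2=\prod_{j\in A}(1+4\lambda_j^2t^2)^{-1/2}$ can be bounded using the inequality $\prod_j(1+x_j)\ge 1+\sum_j x_j$, applied after splitting the factors in $A$ into two subgroups $A',A''$. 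This gives
\[
|\varphi_X(t)|^2\le(1+4s_{A'}^2t^2)^{-1/2}(1+4s_{A''}^2t^2)^{-1/2},\qquad s_{E}^2:=\sum_{j\in E}\lambda_j^2 .
\]
An elementary computation (split at $|t|=1/s$ for the larger and smaller scale) then yields $\int|\varphi_X|^2\lesssim(s_{A'}s_{A''})^{-1/2}$ up to a logarithmic factor in the ratio, which I must avoid. Choosing the subgroups balanced so that $s_{A'}\asymp s_{A''}$ gives the clean bound $\int|\varphi_X|^2\lesssim 1/s_A$.

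The main step, and the expected obstacle, is the combinatorial choice of $A,B$ (and their subgroups) so that the product of the two $L^2$ bounds is $\lesssim(\Lambda_1\Lambda_2)^{-1/2}$. I would split into two cases.

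\emph{Case $\lambda_1\le c\Lambda_1$.} The squared spectral mass is spread out, and a greedy partition of the $\lambda_j^2$ into four groups of comparable mass $\asymp\Lambda_1^2\asymp\Lambda_2^2$ is possible. This gives $\|f\|_\infty\lesssim 1/\Lambda_1\asymp(\Lambda_1\Lambda_2)^{-1/2}$.

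\emph{Case $\lambda_1>c\Lambda_1$.} Here $\lambda_1\asymp\Lambda_1$ dominates. I treat $\lambda_1g_1^2$ separately: its density is $(2\pi\lambda_1x)^{-1/2}e^{-x/(2\lambda_1)}$. The rest, $W=\sum_{j\ge2}\lambda_jg_j^2$, has mass $\Lambda_2$. If $W$ itself can be balanced, I bound the density of $W$ by $\lesssim1/\Lambda_2$ as in the first case. Conditioning on $g_1$ is then too crude, since it gives $1/\Lambda_2$ rather than $(\Lambda_1\Lambda_2)^{-1/2}$. I would instead use the $L^2$ bound with $X=\lambda_1g_1^2+(\text{half of }W)$ and $Y=(\text{other half})$. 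Integrating
\[
(1+4\lambda_1^2t^2)^{-1/2}(1+\Lambda_2^2t^2)^{-1/2}
\]
gives $\lesssim\lambda_1^{-1}\log(\lambda_1/\Lambda_2)$-type terms. If the logarithm cannot be removed this way, I would fall back on the explicit convolution identity
\[
\int_0^x (x-s)^{-1/2}s^{-1/2}\,ds=\pi,
\]
which shows that the density of $\lambda_1g_1^2+\lambda_2g_2^2$ is at most $\tfrac12(\lambda_1\lambda_2)^{-1/2}$. This settles the subcase $\lambda_2\gtrsim\Lambda_2$ directly, and in the remaining subcase I would replace $\lambda_2 g_2^2$ by a balanced block of the tail.

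Getting the constants to combine to exactly $(\Lambda_1\Lambda_2)^{-1/2}$ without logarithmic losses is where I expect the real work. If it resists, I would defer to the argument of Theorem~2.7 in \cite{gotze2019large}.
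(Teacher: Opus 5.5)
\paragraph{Comparison with the paper.} The paper gives no argument of its own for this lemma; it quotes Theorem~2.7 of \cite{gotze2019large}. Your fallback is therefore exactly what the paper does, and your density-plus-Plancherel route is a genuinely different, self-contained attempt. Much of it is sound:
\begin{itemize}
\item the reduction to bounding $\|f\|_\infty$;
\item the bound $\|f_X*f_Y\|_\infty\le\|f_X\|_2\|f_Y\|_2$;
\item the four balanced groups when $\lambda_1\le c\Lambda_1$ (greedy balancing works once, say, $c^2\le 1/6$);
\item the identity giving $\tfrac12(\lambda_1\lambda_2)^{-1/2}$ when $\lambda_1>c\Lambda_1$ and $\lambda_2\ge c\Lambda_2$, since convolving with further independent terms cannot raise $\|f\|_\infty$.
\end{itemize}

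\paragraph{The gap.} The remaining regime is $\lambda_1>c\Lambda_1$, $\lambda_2<c\Lambda_2$: a dominant top eigenvalue over a spread-out tail, e.g.\ $\lambda_1=1$ and $k$ eigenvalues equal to $1/k$. You leave this case open, and neither tool you offer closes it.
\begin{itemize}
\item The $L^2$ pairing loses a logarithm intrinsically. The factor containing $\lambda_1g_1^2$ satisfies $|\varphi_X(t)|^2\gtrsim(\lambda_1|t|)^{-1}$ on $1/\lambda_1\le|t|\le 1/\Lambda_2$. Hence $\|f_X\|_2^2\gtrsim\lambda_1^{-1}\log(\lambda_1/\Lambda_2)$ when $\lambda_1\ge\Lambda_2$, and $\|f_X\|_2=\infty$ if $X=\lambda_1g_1^2$ alone. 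The other factor has variance at most $2\Lambda_2^2$, so $\|f_Y\|_2^2\gtrsim 1/\Lambda_2$ by Chebyshev. Thus no choice of $A,B$ beats $(\lambda_1\Lambda_2)^{-1/2}\{\log(\lambda_1/\Lambda_2)\}^{1/2}$.
\item The arcsine-type convolution identity does not survive replacing $\lambda_2g_2^2$ by a block $V$. It needs $f_V(s)\lesssim(\Lambda_2 s)^{-1/2}$, which is false. For $V=\mu\sum_{j\le k}g_j^2$ with $\sqrt k\,\mu\asymp\Lambda_2$, one has $f_V(k\mu)\asymp(\sqrt k\,\mu)^{-1}$, whereas $(\Lambda_2\,k\mu)^{-1/2}\asymp k^{-3/4}\mu^{-1}$.
\end{itemize}

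\paragraph{The missing step.} Condition on the tail rather than on $g_1$; as you note, conditioning on $g_1$ only gives $1/\Lambda_2$. Put $W=\sum_{j\ge2}\lambda_jg_j^2$. Since $\lambda_2<c\Lambda_2$ and the $\Lambda_1$ of $W$ is $\Lambda_2$, your first case applied to $W$ gives $\|f_W\|_\infty\lesssim 1/\Lambda_2$. Let $p_1(s)\le(2\pi\lambda_1 s)^{-1/2}$ denote the density of $\lambda_1g_1^2$, and split according to whether $x-W\le\Lambda_2$:
\[
f(x)=\E\bigl[p_1(x-W)\mathbf 1\{W<x\}\bigr]
\le(2\pi\lambda_1)^{-1/2}\Bigl\{\|f_W\|_\infty\int_0^{\Lambda_2}y^{-1/2}\,dy+\Lambda_2^{-1/2}\Bigr\}
\lesssim(\lambda_1\Lambda_2)^{-1/2}.
\]
This is $\lesssim(\Lambda_1\Lambda_2)^{-1/2}$ because $\lambda_1>c\Lambda_1$. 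The argument uses only $\int f_W=1$ and the sup bound on $f_W$, not any pointwise singular profile. With it, your three cases give a complete elementary proof of the bound that the paper imports.
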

\begin{lemma} \label{lem: GaussianComp}
[Gaussian comparison over centered balls]
Let $Z_{1n}$ and $Z_{2n}$ be two independent Gaussian elements in $\h$ with zero mean and covariance operators $\Sigma_{1n}$ and $\Sigma_{2n}$, respectively. For a smooth function $v_{\eta,\tau}$ given in Lemma \ref{lem:u_function_results}, we have
\[
\begin{split}
|\E v_{\eta,\tau}(Z_{1n}) - \E v_{\eta,\tau}(Z_{2n})| &\lesssim  \tau^{-1} |\tr(\Sigma_{1n}-\Sigma_{2n})| \\&\qquad + \tau^{-2}\sqrt{(\tr(\Sigma_{1n})+ \tr(\Sigma_{2n}))(\|\Sigma_{1n}\| + \|\Sigma_{2n}\|)} \|\Sigma_{1n} -\Sigma_{2n}\|_\hs.
\end{split}
\]
\end{lemma}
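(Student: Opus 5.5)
The plan is to use Gaussian interpolation. Because $Z_{1n}$ and $Z_{2n}$ are independent, we can set, for $t\in[0,1]$,
\[
Y(t):=\sqrt{t}\,Z_{1n}+\sqrt{1-t}\,Z_{2n},
\]
which is centered Gaussian with covariance $t\Sigma_{1n}+(1-t)\Sigma_{2n}$. Writing $\phi(t):=\E v_{\eta,\tau}(Y(t))$, the target quantity is $|\phi(1)-\phi(0)|$. Differentiating under the expectation (justified by the bounded derivatives of $v_{\eta,\tau}$ and the moments of Gaussian elements) and applying Gaussian integration by parts in $\h$ gives the standard identity
\[
\phi'(t)=\tfrac12\,\E\operatorname{tr}\!\big((\Sigma_{1n}-\Sigma_{2n})\,\nabla^2 v_{\eta,\tau}(Y(t))\big).
\]
This identity can be checked on finite-dimensional projections $P_d$ and then passed to the limit $d\to\infty$, since the traces converge because $\Sigma_{jn}$ are trace class.

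Next, insert the Hessian from Lemma~\ref{lem:u_function_results}~(iii). With $D:=\Sigma_{1n}-\Sigma_{2n}$, this splits $\phi'(t)$ into two pieces. The first is
\[
\tau^{-1}\E\big[u'(\cdot)\big]\operatorname{tr}(D),
\]
which is bounded by $\|u'\|_\infty\tau^{-1}|\tr(\Sigma_{1n}-\Sigma_{2n})|$ and accounts for the first term of the bound. The second is
\[
2\tau^{-2}\,\E\big[u''(\cdot)\langle Y(t),DY(t)\rangle\big].
\]
Bounding this crudely by $\|D\|_{\rm op}\E\|Y\|^2$ would give $\tr(\Sigma)\,\|D\|_{\rm op}$, which does not have the required form. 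I will instead exploit the fact that $Y$ is Gaussian.

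The second piece is the step I expect to be the main obstacle. The idea is to write $Y(t)=\Sigma_t^{1/2}G$, where $G$ is an isonormal-type Gaussian, so that the quadratic form becomes $\langle G,\Sigma_t^{1/2}D\Sigma_t^{1/2}G\rangle$. Then I control $\E|\langle Y,DY\rangle|$ via Cauchy--Schwarz:
\[
\E|\langle Y,DY\rangle|\le \big(\E\langle Y,DY\rangle^2\big)^{1/2}
=\Big(\big(\tr(\Sigma_tD)\big)^2+2\,\|\Sigma_t^{1/2}D\Sigma_t^{1/2}\|_\hs^2\Big)^{1/2}.
\]
Each part is then bounded separately. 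For the Hilbert--Schmidt part,
\[
\|\Sigma_t^{1/2}D\Sigma_t^{1/2}\|_\hs\le\|\Sigma_t\|_{\rm op}\|D\|_\hs.
\]
For the trace part,
\[
|\tr(\Sigma_tD)|\le\|\Sigma_t\|_\hs\|D\|_\hs\le\sqrt{\tr(\Sigma_t)\|\Sigma_t\|_{\rm op}}\,\|D\|_\hs .
\]
Both are at most $\sqrt{(\tr\Sigma_{1n}+\tr\Sigma_{2n})(\|\Sigma_{1n}\|+\|\Sigma_{2n}\|)}\,\|D\|_\hs$, using $\|\Sigma_t\|_{\rm op}\le\sqrt{\tr\Sigma_t\,\|\Sigma_t\|_{\rm op}}$ and the convexity of trace and operator norm in $t$. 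The HS identity for Gaussian quadratic forms will first be verified on the projections $P_d$ and then extended by monotone convergence.

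Finally, integrate $|\phi'(t)|$ over $t\in[0,1]$. Combining the two pieces, with constants depending only on $\|u'\|_\infty$ and $\|u''\|_\infty$, yields the stated bound uniformly in $\eta$.
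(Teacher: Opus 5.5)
Your proposal is correct, and its skeleton is the paper's. The paper reaches the integrand through the Ornstein--Uhlenbeck solution $\mathcal F_n$ of the Stein equation. But $\nabla^2\mathcal F_n$ is just $-\tfrac12\int_0^1\E\nabla^2 v_{\eta,\tau}$ along the interpolant $\sqrt{1-a}\,Z_{1n}+\sqrt a\,Z_{2n}$. So its terms $R_4$ and $R_3$ are exactly your $u'$-trace piece and your $u''$-quadratic-form piece, with $t=1-a$.

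The genuine difference is confined to the step you flagged as the main obstacle. The paper does not use Gaussianity there at all. It writes $|\langle Y,DY\rangle|\le\|Y\|\,\|DY\|$ and applies Cauchy--Schwarz, with $\E\|Y\|^2=\tr(\Sigma_t)$ and $\E\|DY\|^2=\tr(D\Sigma_tD)\le\|\Sigma_t\|_\opr\|D\|_\hs^2$. This lands directly on $\sqrt{\tr(\Sigma_t)\|\Sigma_t\|_\opr}\,\|D\|_\hs$, so the crude bound you discarded was one Cauchy--Schwarz step away from the target. That route needs only second moments. It avoids both the isonormal representation and the finite-dimensional approximation of the quadratic-form variance identity.

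Your route, in exchange, gives the intermediate bound $\tau^{-2}\{|\tr(\Sigma_tD)|+\|\Sigma_t\|_\opr\|D\|_\hs\}$. This is never worse, up to constants, than $\tau^{-2}\sqrt{\tr(\Sigma_t)\|\Sigma_t\|_\opr}\,\|D\|_\hs$. When $\tr(\Sigma_tD)$ is small it can be smaller by a factor of order $\{\tr(\Sigma_t)/\|\Sigma_t\|_\opr\}^{1/2}$. For the lemma as stated you relax it back to the same final form, so nothing is gained here.

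One small technical correction concerns passing the identity $\E\langle Y,DY\rangle^2=(\tr(\Sigma_tD))^2+2\|\Sigma_t^{1/2}D\Sigma_t^{1/2}\|_\hs^2$ from $P_d$ to $\h$. Monotone convergence does not apply, since $\langle P_dY,DP_dY\rangle^2$ need not be monotone in $d$. Use Fatou's lemma instead (an upper bound is all you need), or work directly with the spectral representation $\sum_j\mu_j g_j^2$ of the quadratic form.
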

\begin{proof}
    The Stein's identity for any Gaussian element $Z_n \in \h$ in Hilbert space yields 
\begin{align}
    \E [\la \nabla \mathcal{F}_n(Z_n), Z_n\ra] = \E[\tr\la \nabla^2\mathcal{F}_n(Z_n), \Sigma_n\ra],
\end{align}
where $\nabla^2\mathcal{F}_n(Z_n)$ is a Hessian operator.
Thus, the Stein's equation in \eqref{Steineq} yields

\begin{align}\label{steinbound_GA_GA}
    | \E(v_{\eta,\tau}(Z_{1n})) - \E(v_{\eta,\tau}(Z_{2n}))| \le |\E[\tr \la \nabla^2\mathcal{F}_n(Z_n) (\Sigma_{1n} - \Sigma_{2n}) \ra]|.
\end{align}
Define the Slepian-Stein interpolant as 
\begin{align}
    Z_n(a) := \sqrt{1-a}Z_{1n} + \sqrt{a}Z_{2n}, \quad a\,\in [0,1].
\end{align}
Then we obtain,
\begin{align}\label{rembound_GA_GA}
    \E[\tr \la \nabla^2\mathcal{F}_n(Z_n) (\Sigma_{1n} - \Sigma_{2n}) \ra] = R_3 + R_4,
\end{align}
where,
$$
R_3 = -2\tau^{-2} \int_{0}^{1} \E[u{''}(\tau^{-1}(\|Z_n(a)\|^2 -\eta))\la Z_n(a), (\Sigma_{1n} - \Sigma_{2n})Z_n(a)\ra]da,
$$
and
$$
R_4 = \tau^{-1} \int_{0}^{1}  \E[u{'}(\tau^{-1}(\|Z_n(a)\|^2 -\eta)) \tr(\Sigma_{1n} - \Sigma_{2n})]da.
$$
Then,
\begin{align}\label{R_2bound_GA_GA}
|R_4| \le C\tau^{-1} |\tr(\Sigma_{1n} - \Sigma_{2n})|.
\end{align}
and by observing the fact that $|\la Z_n(a), (\Sigma_{1n} - \Sigma_{2n})Z_n(a)\ra| \le \|Z_n(a)\| \|(\Sigma_{1n} - \Sigma_{2n})Z_n(a)\|$ and Cauchy-Schwarz inequality yields
$$
|R_3| \le 2\tau^{-2} \int_{0}^{1} \sqrt{\E\|Z_n(a)\|^2 \E\|(\Sigma_{1n} - \Sigma_{2n})Z_n(a)\|^2}da.
$$
Next,
\begin{align*}
\E \|Z_n(a)\|^2 = \tr(\text{Cov}(Z_n(a))) = (1-a)\,\tr(\Sigma_{1n}) + a\,\tr(\Sigma_{2n}),
\end{align*}
and
\begin{align*}
\E\|(\Sigma_{1n} - \Sigma_{2n})Z_n(a)\|^2 & = \E \la (\Sigma_{1n} - \Sigma_{2n})Z_n(a), (\Sigma_{1n} - \Sigma_{2n})Z_n(a) \ra \\
& = \E \la Z_n(a), (\Sigma_{1n} - \Sigma_{2n})^2 Z_n(a)\ra  \\
& = \tr((\Sigma_{1n} - \Sigma_{2n})^2 \text{Cov}(Z_n(a))).
\end{align*}
Thus, we have
\begin{align}\label{R_1bound_GA_GA}
    |R_3| \le C \tau^{-2} \sqrt{(\tr(\Sigma_{1n})+ \tr(\Sigma_{2n}))(\|\Sigma_{1n}\| + \|\Sigma_{2n}\|)} \|\Sigma_{1n} -\Sigma_{2n}\|_\hs.
\end{align}
The final conclusion holds by combining \eqref{steinbound_GA_GA}, \eqref{rembound_GA_GA}, \eqref{R_2bound_GA_GA} and  \eqref{R_1bound_GA_GA} and we get

\begin{equation*}
\begin{split}
\left|
\E v_{\eta,\tau}(Z_{1n})
-
\E v_{\eta,\tau}(Z_{2n})
\right|
\lesssim{}&
\tau^{-2}
\left[
\{\tr(\Sigma_{1n})+\tr(\Sigma_{2n})\}
\{\|\Sigma_{1n}\|_{\opr}+\|\Sigma_{2n}\|_{\opr}\}
\right]^{1/2}
\|\Sigma_{1n}-\Sigma_{2n}\|_{\hs}
\\
&\hspace{7cm}+
\tau^{-1}
\left|
\tr(\Sigma_{1n}-\Sigma_{2n})
\right|.
\end{split}
\end{equation*}
\end{proof}

\newpage
\section{Appendix: Proof of results in Section~\ref{sec:main}}

\textbf{Proof of Theorem~\ref{thm: GA}.}
\label{subsec:lem1_proof}
We will assume without loss of generality that $Z_n$ is independent of $X_i$'s. 
Furthermore, we will assume $\Delta_n \leq 1$,  as otherwise the conclusion holds trivially. We divide the proof into several steps. Let $v_{\eta,\tau}$ be a smooth function on $\h$ given in Lemma \ref{lem:u_function_results}~(ii). 

\medskip

\noindent\textbf{Step 1.}
By Lemma \ref{lem:u_function_results}, observe that for any $a, \tau > 0$,
\[
\begin{split}
\Prob (\|T_n\| \le a) &= \Prob (\|T_n\|^2 \le a^2) \le \E[v_{a^2,\tau}(T_n)] \\
&= \E[v_{a^2,\tau}(Z_n)] + \E[v_{a^2,\tau}(T_n) - v_{a^2,\tau}(Z_n)] \\
&\le \Prob(\|Z_n\|^2 \le a^2+\tau) + \E[v_{a^2,\tau}(T_n) - v_{a^2,\tau}(Z_n)] \\
&= \Prob (\|Z_n\| \le a) + \Prob (a^2 < \|Z_n\|^2 \le a^2 + \tau) + \E[v_{a^2,\tau}(T_n) - v_{a^2,\tau}(Z_n)].
\end{split}
\]
Likewise, one has
\[
\Prob (\|T_n\| \le a) \ge  \Prob (\|Z_n\| \le a) - \Prob (a^2 -\tau< \|Z_n\|^2 \le a^2) + \E[v_{a^2-\tau,\tau}(T_n) - v_{a^2-\tau,\tau}(Z_n)].
\]
Maximizing over $a$ gives
\begin{equation} \label{bd}
    \begin{split}
    d_{\mathrm{Kol}}(\|T_n\|,\|Z_n\|)
    &\le \sup_{\eta \in \R}|\E [v_{\eta,\tau}(T_n)  -v_{\eta,\tau}(Z_n)]|  + \sup_{\eta \ge 0} \Prob(\eta-\tau < \|Z_n\|^2 \le \eta + \tau)\\
    &\le \underbrace{\sup_{\eta \in \R}|\E [v_{\eta,\tau}(T_n) -v_{\eta,\tau}(L_n)]|}_{(I)} + \underbrace{\sup_{\eta \in \R}|\E[v_{\eta,\tau}(L_n)- v_{\eta,\tau}(Z_n)]|}_{(II)} \\ & \qquad + \underbrace{\sup_{\eta \ge 0} \Prob(\eta -\tau< \|Z_n\|^2 \le \eta + \tau)}_{(III)},
    \end{split}
\end{equation}
where the second step follows from the triangle inequality.

\medskip

\noindent\textbf{Step 2.} Next, we find upper bounds on $(I), (II)$, and $(III)$. Recall that $R_n = T_n - L_n$, then Lemma~\ref{lem:u_function_results}~(iv) combined with the Cauchy-Schwarz inequality yields,
\begin{equation} 
\begin{split}
(I) &\lesssim \tau^{-1} \left(\E\left[\|R_n\|^2 + \|R_n\|\|L_n\|\right]\right) \\
&\le \tau^{-1} \Big\{\Exp [\|R_n\|^2] + \Big(\E[\|R_n\|^2] \E[\|L_n\|^2]\Big)^{1/2} \Big\} \\
&=: \tau^{-1} \psi_3(L_n,R_n). \label{boundrem1}
\end{split}
\end{equation}
Next, by Lemma \ref{lem: stein}, it is straightforward to obtain
\begin{align}\label{linbound1}
  (II)
  \lesssim \tau^{-3} \psi_1(L_n) + \tau^{-2} \psi_2(L_n),  
\end{align}
where 
\begin{align*}
     \psi_1(L_n) &:= \frac{\|\Ga\|_\hs}{n^2} \sni \Exp[\|g^{(i_1)}(X_{i_1})\|^4] + \frac{\|\Ga\|_\opr^{3/2}}{n^{3/2}} \sni \Exp[\|g^{(i_1)}(X_{i_1})\|^3],  \\
    \psi_2(L_n) &:= \frac{\|\Ga\|_\opr^{1/2}}{n^{3/2}} \sni \Exp[\|g^{(i_1)}(X_{i_1})\|^3] + \frac{1}{n^{2}} \sni \Exp[\|g^{(i_1)}(X_{i_1})\|^4]. 
\end{align*}
Finally, by Lemma \ref{lem: AC}, one has 
\begin{equation}
(III) \lesssim \aleph (\Gamma_n) \tau, \label{antconc1}
\end{equation}
where $\aleph (\Gamma_n) = (\Lambda_1(\Gamma_n)\Lambda_2(\Gamma_n))^{-1/2}$. 
Therefore, combining \eqref{boundrem1}, \eqref{linbound1}, and \eqref{antconc1}, we obtain
\[
    d_{\mathrm{Kol}}(\|T_n\|,\|Z_n\|) \lesssim  \tau^{-3} \psi_1(L_n) + \tau^{-2} \psi_2(L_n) + \tau^{-1} \psi_3(L_n, R_n) + \aleph (\Gamma_n) \tau.
\]
Now, setting
\[
        \tau = \big(\psi_1(L_n)/\aleph(\Gamma_n)\big)^{1/4} +  \big(\psi_2(L_n)/\aleph(\Gamma_n)\big)^{1/3} + \big(\psi_3 (L_n, R_n)/\aleph(\Gamma_n)\big)^{1/2},
\]
one has
\begin{equation} \label{interim1}
    d_{\mathrm{Kol}}(\|T_n\|,\|Z_n\|) \lesssim \aleph (\Gamma_n)^{3/4} \psi_1(L_n)^{1/4} + \aleph (\Gamma_n)^{2/3}\psi_2(L_n)^{1/3} + \aleph (\Gamma_n)^{1/2}\psi_3(L_n,R_n)^{1/2}. 
\end{equation}
Finally, by Jensen's inequality we get $$\frac{1}{n} \sni \Exp[\|g^{(i_1)}(X_{i_1})\|^k] \leq m_n^{k/4},\;k=3,4,$$ and thus the final claim follows by substituting $\aleph(\Ga) = \|\Ga\|_\hs^{-1} \ell_n^{-1/4}$ and in \eqref{interim1}. 

\qed

The following auxiliary lemma helps in finding the bounds for Proposition~\ref{prop: HGA}.
\begin{lemma} \label{lem: boundlrHilb1}
    Under Assumption \ref{A1}, we have $\E\|L_n\|^2 = tr(\Ga) \lesssim 1$ and $\E\|R_n\|^2 \lesssim 1/n,$ where $L_n$ and $R_n$ are as defined in \eqref{lin} and \eqref{rem}
\end{lemma}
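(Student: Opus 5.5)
The statement has two parts, and I will treat them separately.

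\emph{The linear part.} Since $L_n=n^{-1/2}\sum_{i_1}g^{(i_1)}(X_{i_1})$ is a sum of independent, mean-zero Hilbert-valued summands, the cross terms vanish. This gives
\[
\E\|L_n\|^2=\frac1n\sum_{i_1=1}^n\E\|g^{(i_1)}(X_{i_1})\|^2=\operatorname{tr}(\Ga),
\]
by \eqref{VarLn} and $\operatorname{tr}(\E[\xi\otimes\xi])=\E\|\xi\|^2$. To bound this, note that $g^{(i_1)}$ is an average of conditional expectations of $h_{\bm i}$. Jensen's inequality, applied to both the conditional expectation and the average, gives $\E\|g^{(i_1)}(X_{i_1})\|^2\le \max_{\bm i}\E\|h_{\bm i}(\xir)\|^2$. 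Moreover, $\E\|h_{\bm i}\|^2\le \E\|\widetilde h(\xir)\|^2\le m_n^{1/2}\le C^{1/2}$ by Assumption \ref{A1}, because centering reduces the second moment. Hence $\operatorname{tr}(\Ga)\lesssim1$.

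\emph{The remainder.} The plan is to exploit the first-order degeneracy of $f^{\bm i}$. Expand
\[
\E\|R_n\|^2=\frac{n}{r^2P_{n,r}^2}\sum_{\bm i,\bm j\in\inr}\E\langle f^{\bm i}(\xir),f^{\bm j}(\xjr)\rangle .
\]
The key claim is that the summand is zero whenever $\bm i$ and $\bm j$ share at most one index. If they share no index, the two kernels are independent and mean zero. If they share exactly one index, say $i_a=j_b=k$, condition on $X_k$: given $X_k$ the two factors are conditionally independent. The conditional mean of $f^{\bm i}$ given $X_k$ vanishes by the degeneracy property $\E[f^{\bm i}\mid X_{i_c}]=0$ stated after \eqref{rem}, and that property is valid in Bochner form.

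\emph{Counting and conclusion.} It remains to count pairs sharing $c\ge2$ indices. There are at most $C_r n^{2r-c}\le C_r n^{2r-2}$ such pairs. For each, Cauchy--Schwarz gives $|\E\langle f^{\bm i},f^{\bm j}\rangle|\le \max_{\bm i}\E\|f^{\bm i}\|^2$. By the triangle inequality, $\E\|f^{\bm i}\|^2\le (r+1)^2\max\{\E\|h_{\bm i}\|^2,\E\|\vartheta_{\bm i,c}\|^2\}\lesssim m_n^{1/2}\lesssim1$, using Jensen for $\vartheta_{\bm i,c}$ as above. Since $P_{n,r}\asymp n^r$, this yields
\[
\E\|R_n\|^2\lesssim \frac{n}{n^{2r}}\,n^{2r-2}=\frac1n .
\]

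The only delicate step is the vanishing of the one-overlap covariances. It needs care with tuple-specific centering when the $X_i$ are not identically distributed, and with conditional expectations in a separable $\h$. Conditioning on the shared coordinate and using the independence of the remaining coordinates handles both. The constants depend only on $r$ and $C$.
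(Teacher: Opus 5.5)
Your proposal is correct and follows the paper's proof essentially step by step. The linear term uses independence of the mean-zero summands and conditional Jensen; for $R_n$, disjoint and one-overlap covariances vanish by first-order degeneracy (conditioning on the shared observation), and the $C_r n^{2r-c}$ pairs with $c\ge2$ common indices are handled by Cauchy--Schwarz, giving $n\cdot n^{2r-2}/n^{2r}=n^{-1}$. The only cosmetic difference is that you bound $\E\|h_{\bm i}\|^2\le\E\|\widetilde h\|^2$ via $\E\|Y-\E Y\|^2\le\E\|Y\|^2$, whereas the paper uses the triangle inequality plus Jensen on $\|\theta_{\bm i}\|$ (for $l=2,3,4$).
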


\textbf{Proof of Proposition~\ref{prop: HGA}.} The claim follows by combining \eqref{interim1} along with Assumptions \ref{A1}--\ref{A2} and Lemma \ref{lem: boundlrHilb1}. 

\qed

Before beginning the proof of Proposition \ref{prop: GA}, we state the following lemma.
 \begin{lemma} \label{lem: boundlrRd}
    Under Assumption \ref{B1}, we have for $l=2,3,4 $ and for any $1\leq i_1\leq n,$ $\Exp\Big(\|g^{(i_1)}(X_{i_1})\|^l\Big) \lesssim d^{l/2}.$ Furthermore, we have $\E\|L_n\|^2 = \tr(\Ga) \lesssim d$ and $\E\|R_n\|^2 \lesssim d/n,$ where $L_n$ and $R_n$ are as defined in \eqref{lin} and \eqref{rem}.
\end{lemma}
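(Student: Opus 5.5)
The plan is to work coordinatewise and reduce everything to scalar moment bounds supplied by \ref{B1}. For the first claim, write $g^{(i_1)}(X_{i_1})=(g^{(i_1)}_1,\dots,g^{(i_1)}_d)^\top$, where $g^{(i_1)}_m$ is an average over $(i_2,\dots,i_r)\in\mathcal I^{r}_{n,-i_1}$ of $\vartheta_{\bm i,1,m}(X_{i_1})=\E[\widetilde h_m(\xir)\mid X_{i_1}]-\theta_{\bm i,m}$. Conditional Jensen and convexity of $|\cdot|^4$ give $\E|g^{(i_1)}_m|^4\le \max_{\bm i}\E|\vartheta_{\bm i,1,m}|^4\le 16\max_{\bm i}\E|\widetilde h_m(\xir)|^4\lesssim 1$. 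Then
\[
\E\|g^{(i_1)}\|^4=\E\Big(\sum_{m=1}^d |g^{(i_1)}_m|^2\Big)^2\le d\sum_{m=1}^d\E|g^{(i_1)}_m|^4\lesssim d^2,
\]
by Cauchy--Schwarz on the inner sum. The cases $l=2,3$ follow from Lyapunov, $\E\|g\|^l\le(\E\|g\|^4)^{l/4}\lesssim d^{l/2}$. Since the summands of $L_n$ are independent and centered, $\E\|L_n\|^2=n^{-1}\sum_{i_1}\E\|g^{(i_1)}\|^2=\tr(\Ga)\lesssim d$.

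For the remainder, $\E\|R_n\|^2=\sum_{m=1}^d\E R_{n,m}^2$, so it suffices to show $\E R_{n,m}^2\lesssim 1/n$ uniformly in $m$. I expand
\[
\E R_{n,m}^2=\frac{n}{r^2P_{n,r}^2}\sum_{\bm i,\bm j\in\inr}\E[f^{\bm i}_m(\xir)f^{\bm j}_m(\xjr)].
\]
The key observation is first-order degeneracy: if $\bm i$ and $\bm j$ share at most one index, the cross moment vanishes. When they are disjoint this follows from independence and centering. When they share exactly one index $k$, condition on $X_k$: the two factors are conditionally independent and each has conditional mean $\E[f\mid X_k]=0$. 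Hence only pairs sharing $c\ge2$ indices contribute. There are $O(n^{2r-c})$ such pairs, and each term is bounded via Cauchy--Schwarz by $\max_{\bm i}\E (f^{\bm i}_m)^2\lesssim \max\E\widetilde h_m^2+\sum_c\E\vartheta^2\lesssim 1$ by \ref{B1}. This gives
\[
\E R_{n,m}^2\lesssim \frac{n\cdot n^{2r-2}}{n^{2r}}=\frac1n,
\]
and summing over $m$ yields $\E\|R_n\|^2\lesssim d/n$.

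The main obstacle is the degeneracy/vanishing argument for the non-identically distributed, index-dependent kernels $f^{\bm i}$. I need to check that $\E[f^{\bm i}(\xir)\mid X_{i_c}]=0$ holds for each tuple with its own centering $\theta_{\bm i}$; this is stated in the preliminaries. I also need the combinatorial count of overlapping tuples to be uniform in $n$, with constants depending only on $r$. The rest of the argument is routine.
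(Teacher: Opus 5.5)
Your proposal is correct and follows essentially the same route as the paper. Both use coordinatewise fourth-moment control from (B.1), together with Jensen and the bound $\|x\|^l\le d^{l/2-1}\sum_m |x_m|^l$, for the projection moments, and then the first-order degeneracy and overlap-counting argument from the proof of Lemma~\ref{lem: boundlrHilb1} for the remainder; the only cosmetic difference is that you run the remainder bound coordinate by coordinate and sum over $m$, whereas the paper bounds $\E\|F_{\bm i}\|^2\lesssim d$ at the vector level, which is equivalent since $\E\langle F_{\bm i},F_{\bm j}\rangle=\sum_m \E[F_{\bm i,m}F_{\bm j,m}]$.
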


\textbf{Proof of Proposition \ref{prop: GA}.}
     The claim follows by combining \eqref{interim1} along with Assumptions \ref{B1}, \ref{B2} and Lemma \ref{lem: boundlrRd}.
    We skip the details for the interest of space.
\qed






\textbf{Proof of Theorem \ref{thm: BA}.}
We first present the proof for Jackknife multiplier bootstrap, i.e for \{$\star$ = {JMB}\}.
Recall, from \eqref{condJMB}, we have $ T_n^\text{JMB}\mid \bm X \sim N_{\h}(0,\Ghat)$ and $Z_n\sim N_{\h}(0,\Ga)$ as in Theorem \ref{thm: GA}. Then applying Lemma~\ref{lem:u_function_results} (ii) conditional on the data $\bm X$ and taking supremum over $a$, we get
\begin{equation} \label{BA1r}
    d_{\mathrm{Kol}}^B(\|T_n^\text{JMB}\|,\|Z_n\|) \lesssim \underbrace{\sup_{\eta \in \R}|\E^\star v_{\eta,\tau}(T_n^\text{JMB})- \E v_{\eta,\tau}(Z_n)]}_{(I)} + \underbrace{\sup_{\eta \ge 0} \Prob(\eta -\tau< \|Z_n\|^2 \le \eta + \tau)}_{(II)}.
\end{equation}
By Lemma~\ref{lem: GaussianComp}, we obtain,
\begin{align}\label{linbound}
  (I) \lesssim \tau^{-2} \psi_4(\Gtill,\Ga) + \tau^{-1} \psi_5(\Gtill,\Ga),  
\end{align}
where
\begin{align} 
        &\psi_4(\Gtill,\Ga) = \sqrt{(\trc(\Gtill) + \trc(\Ga))(\|\Gtill\|_\opr+\|\Ga\|_\opr)}\; \|\Gtill-\Ga\|_\hs, \label{psi4} \\
        &{\psi_5}(\Gtill,\Ga) = \Big|\tr\Big({\hat{\Gamma}^\text{JMB}}_n - \Ga\Big)\Big|, \label{psi5}
\end{align}
and by Lemma \ref{lem: AC}, one has 
\begin{equation}
(II) \lesssim \aleph (\Gamma_n) \tau, \label{antconc}
\end{equation}
where $\aleph (\Gamma_n) = (\Lambda_1(\Gamma_n)\Lambda_2(\Gamma_n))^{-1/2}$.
Therefore, combining \eqref{linbound}, and \eqref{antconc}, we obtain
\[
    d_{\mathrm{Kol}}^B(\|T_n^\text{JMB}\|,\|Z_n\|) \lesssim  \tau^{-2} \psi_4(\Gtill,\Ga) + \tau^{-1} \psi_5(\Gtill,\Ga) + \aleph (\Gamma_n) \tau.
\]
Now, setting
\[
    \tau = \big(\psi_4(\Gtill,\Ga)/\aleph(\Gamma_n)\big)^{1/3}+\big(\psi_5(\Gtill,\Ga)/\aleph(\Gamma_n)\big)^{1/2},
\]
one has
\begin{equation} \label{interim2}
    d_{\mathrm{Kol}}^B (\|T_n^\text{JMB}\|,\|Z_n\|) \lesssim \aleph (\Gamma_n)^{2/3} \psi_4(\Gtill,\Ga)^{1/3} + \aleph (\Gamma_n)^{1/2}\psi_5(\Gtill,\Ga)^{1/2}. 
\end{equation}

Thus, the final claim follows by taking expectation on both sides together with substituting $\aleph(\Ga) = \|\Ga\|_\hs^{-1} \ell_n^{-1/4}$ and applying H\"older's inequality. 




\bigskip

Next, we present the proof for Gaussian weighted bootstrap, i.e for \{$\star$ = {GWB}\}.
Applying Lemma~\ref{lem:u_function_results} (ii) conditional on the data $\bm X$ and taking supremum over $a$, we get
\begin{equation} \label{bwd}
    \begin{split}
    &\sup_{a \ge 0} |\Prob^\star(\|T_n^\text{GWB}\| \le a) -\Prob(\|Z_n\| \le a)|\\
    &\qquad \le \sup_{\eta \in \R}|\E^\star  v_{\eta,\tau}(T_n^\text{GWB}) - \E v_{\eta,\tau}(Z_n)|  + \sup_{\eta \ge 0} \Prob(\eta < \|Z_n\|^2 \le \eta + \tau)\\
    & \qquad \le \underbrace{\sup_{\eta \in \R}|\E^\star  v_{\eta,\tau}(T_n^\text{GWB}) - \E^\star  v_{\eta,\tau}(L_n^\text{GWB})|}_{(I)} + \underbrace{\sup_{\eta \in \R}|\E^\star  v_{\eta,\tau}(L_n^\text{GWB}) - \E v_{\eta,\tau}(Z_n)|}_{(II)} \\ & \hspace{6.5cm} + \underbrace{\sup_{\eta \ge 0} \Prob(\eta < \|Z_n\|^2 \le \eta + \tau)}_{(III)},
    \end{split}
\end{equation}
where the second step follows from the triangle inequality. 

Next, we bound $(I), (II)$ and $(III)$. Following similar calculations as in \eqref{boundrem1}, we have
\begin{equation} \label{boundrem2}
\begin{split}
    (I) &\lesssim \tau^{-1} \Big\{ \E^\star \|R_n^\text{GWB}\|^2 + \Big( \E^\star \|R_n^\text{GWB}\|^2 \E^\star \|L_n^\text{GWB}\|^2 \Big)^{1/2} \Big\} \\& =\vcentcolon \tau^{-1} \psi_8(L_n^\text{GWB},R_n^\text{GWB}).
\end{split}
\end{equation}
Since $L_n^\text{GWB}\mid \bm X \sim N_{\h}(0,\Ghat^\text{GWB}),$ using lemma~\ref{lem: GaussianComp}, we obtain,
\begin{align}\label{linbound2}
  (II) \lesssim \tau^{-2} \psi_6(\Ghat^\text{GWB},\Ga) + \tau^{-1} \psi_7(\Ghat^\text{GWB},\Ga),  
\end{align}
where
\begin{align} 
        & \psi_6(\Ghat^\text{GWB},\Ga) = \sqrt{(\trc(\Ghat^\text{GWB}) + \trc(\Ga))(\|\Ghat^\text{GWB}\|_\opr+\|\Ga\|_\opr)}\; \|\Ghat^\text{GWB}-\Ga\|_\hs, \\
        & \psi_7(\Ghat^\text{GWB},\Ga) = \Big|\tr\Big({\hat{\Gamma}^\text{GWB}}_n - \Ga\Big)\Big|, 
\end{align}
and by Lemma \ref{lem: AC}, one has 
\begin{equation}
(III) \lesssim \aleph (\Gamma_n) \tau, \label{antconc2}
\end{equation}
where $\aleph (\Gamma_n) = (\Lambda_1(\Gamma_n)\Lambda_2(\Gamma_n))^{-1/2}$. Therefore, combining \eqref{boundrem2}, \eqref{linbound2}, and \eqref{antconc2}, we obtain
\[
    d_{\mathrm{Kol}}^B(\|T_n^\text{GWB}\|,\|Z_n\|) \lesssim  \tau^{-2} \psi_6(\Ghat^\text{GWB},\Ga) + \tau^{-1} \psi_7(\Ghat^\text{GWB},\Ga) + \tau^{-1} \psi_8(L_n^\text{GWB},R_n^\text{GWB}) + \aleph (\Gamma_n) \tau.
\]
Now, setting
\[
    \tau = \big( \psi_6(\Ghat^\text{GWB},\Ga)/\aleph(\Gamma_n)\big)^{1/3} +\big( \psi_7(\Ghat^\text{GWB},\Ga)/\aleph(\Gamma_n)\big)^{1/2} +\big( \psi_8(L_n^\text{GWB},R_n^\text{GWB})/\aleph(\Gamma_n)\big)^{1/2},
\]
one has
\begin{multline*}
    d_{\mathrm{Kol}}^B (\|T_n^\text{GWB}\|,\|Z_n\|) \lesssim \aleph (\Gamma_n)^{2/3} \psi_6(\Ghat^\text{GWB},\Ga)^{1/3} + \aleph (\Gamma_n)^{1/2} \psi_7(\Ghat^\text{GWB},\Ga)^{1/2} \\+ \aleph (\Gamma_n)^{1/2} \psi_8(L_n^\text{GWB},R_n^\text{GWB})^{1/2}.
\end{multline*}
Again, the claim follows by taking expectation on both sides together with substituting $\aleph(\Ga) = \|\Ga\|_\hs^{-1} \ell_n^{-1/4}$ and applying H\"older's inequality. 

\bigskip

Finally, we present the proof for empirical bootstrap, i.e for \{$\star$ = {EB}\}.
Applying Lemma~\ref{lem:u_function_results} (ii) conditional on the data $\bm X$ and taking supremum over $a$, we get
\begin{equation} \label{ebwd}
    \begin{split}
    &\sup_{a \ge 0} |\Prob^\star(\|T_n^\text{EB}\| \le a) -\Prob(\|Z_n\| \le a)|\\
    &\qquad \le \sup_{\eta \in \R}|\E^\star  v_{\eta,\tau}(T_n^\text{EB}) - \E v_{\eta,\tau}(Z_n)|  + \sup_{\eta \ge 0} \Prob(\eta < \|Z_n\|^2 \le \eta + \tau)\\
    & \qquad \le \underbrace{\sup_{\eta \in \R}|\E^\star  v_{\eta,\tau}(T_n^\text{EB}) - \E^\star  v_{\eta,\tau}(L_n^\text{EB})|}_{(I)} + \underbrace{\sup_{\eta \in \R}|\E^\star  v_{\eta,\tau}(L_n^\text{EB}) - \E^\star  v_{\eta,\tau}(Z_n^\text{EB})|}_{(II)} \\ & \hspace{2.5cm} + \underbrace{\sup_{\eta \in \R}|\E^\star  v_{\eta,\tau}(Z_n^\text{EB}) - \E v_{\eta,\tau}(Z_n)|}_{(III)} + \underbrace{\sup_{\eta \ge 0} \Prob(\eta < \|Z_n\|^2 \le \eta + \tau)}_{(IV)},
    \end{split}
\end{equation}
where the second step follows from the triangle inequality. 

Next, we bound $(I), (II), (III)$ and $(IV)$. Following similar calculations as in \eqref{boundrem1}, we have
\begin{equation} \label{boundrem4}
\begin{split}
    (I) &\lesssim \tau^{-1} \Big\{ \E^\star \|R_n^\text{EB}\|^2 + \Big( \E^\star \|R_n^\text{EB}\|^2 \E^\star \|L_n^\text{EB}\|^2 \Big)^{1/2} \Big\} \\& =\vcentcolon \tau^{-1} \psi_{13}(L_n^\text{EB},R_n^\text{EB}).
\end{split}
\end{equation}
Next, using Lemma \ref{lem: stein}, conditional on the data $\bm X$, it is straightforward to obtain
\begin{align}\label{linbound4}
  (II)
  \lesssim \tau^{-3} \psi_9(L_n^\text{EB}) + \tau^{-2} \psi_{10}(L_n^\text{EB}),  
\end{align}
where 
\begin{align}
     & \psi_9(L_n^\text{EB}) = \frac{\|\Ghat^\text{EB}\|_\hs}{n^2} \sni \E^\star (\|g^{\bm X}(\xi_{i_1})\|^4) + \frac{\|\Ghat^\text{EB}\|_\opr^{3/2}}{n^{3/2}} \sni \E^\star (\|g^{\bm X}(\xi_{i_1})\|^3), \\
    & \psi_{10}(L_n^\text{EB}) = \frac{\|\Ghat^\text{EB}\|_\opr^{1/2}}{n^{3/2}} \sni \E^\star (\|g^{\bm X}(\xi_{i_1})\|^3) + \frac{1}{n^{2}} \sni \E^\star (\|g^{\bm X}(\xi_{i_1})\|^4).
\end{align}
Since $Z_n^\text{EB}\mid \bm X \sim N_{\h}(0,\Ghat^\text{EB}),$ using lemma~\ref{lem: GaussianComp}, we obtain,
\begin{align}\label{linbound5}
  (III) \lesssim \tau^{-2} \psi_{11}(\Ghat^\text{EB},\Ga) + \tau^{-1} \psi_{12}(\Ghat^\text{EB},\Ga),  
\end{align}
where
\begin{align} 
        & \psi_{11}(\Ghat^\text{EB},\Ga) = \sqrt{(\trc(\Ghat^\text{EB}) + \trc(\Ga))(\|\Ghat^\text{EB}\|_\opr+\|\Ga\|_\opr)}\; \|\Ghat^\text{EB}-\Ga\|_\hs, \\
        & \psi_{12}(\Ghat^\text{EB},\Ga) = \Big|\tr\Big({\hat{\Gamma}^\text{EB}}_n - \Ga\Big)\Big|,
\end{align}
and by Lemma \ref{lem: AC}, one has 
\begin{equation} 
(IV) \lesssim \aleph (\Gamma_n) \tau, \label{antconc4}
\end{equation}
where $\aleph (\Gamma_n) = (\Lambda_1(\Gamma_n)\Lambda_2(\Gamma_n))^{-1/2}$. Therefore, combining \eqref{boundrem4}, \eqref{linbound4}, \eqref{linbound5}, and \eqref{antconc4}, and setting
\begin{multline*}
    \tau = \big(\psi_9(L_n^\text{EB})/\aleph(\Gamma_n)\big)^{1/4} +  \big(\psi_{10}(L_n^\text{EB})/\aleph(\Gamma_n)\big)^{1/3} \\+ \big( \psi_{11}(\Ghat^\text{EB},\Ga)/\aleph(\Gamma_n)\big)^{1/3} +\big( \psi_{12}(\Ghat^\text{EB},\Ga)/\aleph(\Gamma_n)\big)^{1/2} \\+\big( \psi_{13}(L_n^\text{EB},R_n^\text{EB})/\aleph(\Gamma_n)\big)^{1/2},
\end{multline*}
one has
\begin{multline*}
    d_{\mathrm{Kol}}^B (\|T_n^\text{EB}\|,\|Z_n\|) \lesssim \aleph (\Gamma_n)^{3/4} \psi_9(L_n^\text{EB})^{1/4} + \aleph (\Gamma_n)^{2/3}\psi_{10}(L_n^\text{EB})^{1/3} + \aleph (\Gamma_n)^{2/3} \psi_{11}(\Ghat^\text{EB},\Ga)^{1/3} \\+ \aleph (\Gamma_n)^{1/2} \psi_{12}(\Ghat^\text{EB},\Ga)^{1/2} + \aleph (\Gamma_n)^{1/2} \psi_{13}(L_n^\text{EB},R_n^\text{EB})^{1/2}.
\end{multline*}
Note that, for $l=3,4,$ 
\begin{equation} \label{egx}
    \begin{split}
        \E^\star \|g^{\bm X}(\xi_1)\|^l = \frac{1}{n} \sni \Bigg\| \frac{1}{n^{r-1}} \sum_{1\leq i_2,\ldots,i_r\leq n} \widetilde h(\xir) - V_n\Bigg\|^l.
    \end{split}
\end{equation}
For $k=3,4$, Jensen's inequality gives
\begin{align}
\E\E^\star
\left[
\|g^{\bm X}(\xi_1)\|^l
\right]
&=
\frac{1}{n}
\sum_{i_1=1}^n
\E
\left\|
\frac{1}{n^{r-1}}
\sum_{1\leq i_2,\ldots,i_r\leq n}
\widetilde h(\xir)
-
V_n
\right\|^l
\nonumber\\
&\lesssim
\frac{1}{n^r}
\sum_{\iicr}
\E
\left[
\|\widetilde h(\xir)\|^l
\right]
\lesssim 1,
\label{eq:EB-projection-moment}
\end{align}
where the last inequality follows from Assumption~\ref{A1}.
Since, $\xi_1,\dots,\xi_n$ are i.i.d. random variables, we have 
\[\frac{1}{n} \sni \E(\|g^{\bm X}(\xi_{i_1})\|^l) \lesssim m_n^{l/4}.\]

Finally, taking the expectation on both sides and applying H\"older's inequality together with  $\aleph(\Ga) = \|\Ga\|_\hs^{-1} \ell_n^{-1/4}$ for the right-hand side quantity completes the proof of Theorem \ref{thm: BA} for EB.

\qed

The following auxiliary lemma helps in finding the bounds for Proposition~\ref{prop: HBA}.

\begin{lemma} \label{lem: boundlrHilb2}
The following hold. 
\begin{itemize}
    \item[(i)]  Under Assumption \ref{A1} and \ref{A3}, we have $\E\|L_n^\star\|^2 \lesssim 1$ and $\E\|R_n^\star\|^2 \lesssim 1/n$, for $\star\in\{\text{EB, GWB}\}$.
    \item[(ii)] Under Assumptions \ref{A1} and \ref{A3}, we have $(a) \;  \Exp(\trc(\hat\Gamma_n^\star)) \lesssim 1$, $(b)\; \Exp\|\hat\Gamma_n^\star-\Ga\|_\hs \lesssim n^{-1/2},$ $(c)\; \Exp\|\hat\Gamma_n^\star\|_\opr \lesssim 1,$ and $(d) \;\E|\tr(\hat\Gamma_n^\star - \Ga)| \lesssim  n^{-1/2}.$ 
\end{itemize}
\end{lemma}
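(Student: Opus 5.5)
The proof treats the two parts separately, and in both the only inputs are Assumption~\ref{A1} (via Jensen and Cauchy--Schwarz), Assumption~\ref{A3} (to kill centering biases), and orthogonality of the Hoeffding components across index sets.

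\textbf{Part (i), linear components.} For GWB, $\E^\star\|L_n^{\mathrm{GWB}}\|^2=\tr(\widehat\Gamma_n^{\mathrm{GWB}})$. Taking expectations and writing $\widehat g^{(i_1)}(X_{i_1})=P_{n-1,r-1}^{-1}\sum\widetilde h(X_{i_{[1:r]}})-U_n$, Jensen gives $\E\|\widehat g^{(i_1)}\|^2\lesssim \max\E\|\widetilde h\|^2\le m_n^{1/2}\lesssim1$. For EB, $\E^\star\|L_n^{\mathrm{EB}}\|^2=\tr(\widehat\Gamma_n^{\mathrm{EB}})$ is bounded the same way using \eqref{eq:EB-projection-moment}.

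\textbf{Part (i), remainders.} For GWB, $R_n^{\mathrm{GWB}}=\sum_{k\ge2}T_n^{(k)}$ is a sum of Gaussian chaoses of order $k\ge2$ in the $\varepsilon_i$, and distinct multi-indices are orthogonal. So $\E^\star\|T_n^{(k)}\|^2$ equals $n^{-1}P_{n-1,k-1}^{-2}$ (up to constants) times a sum over $k$-sets of squared norms of averaged kernels. There are $\asymp n^k$ such terms, each with expectation $\lesssim1$, giving order $n^{k}/(n\cdot n^{2k-2})=n^{1-k}\le n^{-1}$. For EB, the conditional kernel $f^{\bm X}$ is first-order degenerate under $\E^\star$, so $\E^\star\|R_n^{\mathrm{EB}}\|^2$ is a degenerate $V$-type variance of order $n^{-1}\cdot n^{-2r+2}\sum_{\mathcal C_n^{r}}\|f^{\bm X}\|^2$-type averages. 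Taking expectation, each such average is controlled by $m_n^{1/2}$, including coincident indices, which is exactly why $m_n$ is defined over $\mathcal C_n^r$. This gives $\lesssim 1/n$.

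\textbf{Part (ii).} Item (a) was shown in part (i). Item (c) follows from $\|\cdot\|_{\mathrm{op}}\le\tr$. For (b), the main step is to decompose
\[
\widehat g^{(i_1)}(X_{i_1})=g^{(i_1)}(X_{i_1})+b_{i_1}+\delta_{i_1},
\]
where $b_{i_1}=0$ by \ref{A3}, and $\delta_{i_1}$ collects the higher-order Hoeffding terms of the leave-in average together with $U_n-\theta$. Show $\E\|\delta_{i_1}\|^2\lesssim n^{-1}$: the degenerate parts average over $\asymp n^{r-1}$ tuples with variance contributions of order $n^{-1}$, and $\E\|U_n-\theta\|^2\lesssim n^{-1}$. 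Then
\[
\widehat\Gamma_n-\Gamma_n=\Bigl\{n^{-1}\sum g\otimes g-\Gamma_n\Bigr\}+\text{cross terms}+n^{-1}\sum\delta\otimes\delta .
\]
The first piece has $\E\|\cdot\|_{\mathrm{HS}}\le(n^{-2}\sum\E\|g\|^4)^{1/2}\lesssim n^{-1/2}$ by independence. The cross terms are bounded by $(\E\|g\|^2)^{1/2}(\E\|\delta\|^2)^{1/2}\lesssim n^{-1/2}$ via Cauchy--Schwarz, and the last piece is $\lesssim n^{-1}$. For EB, argue analogously with $g^{\bm X}(j)$ in place of $\widehat g^{(j)}$; it differs from $\widehat g^{(j)}$ by $O(n^{-1})$-weight terms from coincident indices and by $V_n-U_n$. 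Item (d) follows by the same decomposition, since $|\tr(A)|$ for the first piece is a centered sum of independent scalars $\|g\|^2-\E\|g\|^2$ with fourth moments bounded.

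\textbf{Main obstacle.} The hardest step is the bound $\E\|\delta_{i_1}\|^2\lesssim n^{-1}$ for non-identically distributed data. It requires checking that every tuple-specific projection $\vartheta_{\bm i,c}$ with $c\ne1$ averages out: its contribution is $(n-1)^{-1}\sum_{j\ne i_1}$ of centered independent terms, so it is of order $n^{-1}$ in $L^2$. The argument must also use \ref{A3} so that no deterministic offset survives.
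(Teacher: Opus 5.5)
Your proposal is correct and follows essentially the paper's route: conditional degeneracy (for EB) and chaos orthogonality (for GWB) give the remainder bounds, and for the covariance you use the decomposition $\widehat g^{(i_1)}=g^{(i_1)}+b_{i_1}+\delta_{i_1}$, with $b_{i_1}=0$ by \ref{A3} and $\E\|\delta_{i_1}\|^2\lesssim n^{-1}$ (the paper writes $\delta_{i_1}=\eta_{i_1}-U_n^\circ$), followed by independence and Cauchy--Schwarz bounds on the Hilbert--Schmidt and trace errors. The differences are cosmetic: you get (c) from $\|\cdot\|_{\mathrm{op}}\le\tr(\cdot)$ and (a) rather than from (b), and you treat EB by perturbing each $g^{\bm X}(j)$ around $\widehat g^{(j)}$ instead of using the paper's global split $\widehat\Gamma_n^{\mathrm{EB}}=c_{n,r}\widehat\Gamma_n^{\mathrm{JMB}}+D_{1n}-D_{2n}$; note also that your EB-remainder bound should read $n^{-1}\cdot n^{-r}\sum_{\mathcal C_n^r}\|f^{\bm X}\|^2$ (an average), not $n^{-1}\cdot n^{-2r+2}\sum_{\mathcal C_n^r}\|f^{\bm X}\|^2$.
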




\textbf{Proof of Proposition~\ref{prop: HBA}.} This proof follows from Theorem~\ref{thm: BA} by using assumptions \ref{A1}, \ref{A2}, and Lemma~\ref{lem: boundlrHilb2}.

\qed

\begin{lemma} \label{lem: boundlrRdboots}
Under assumptions \ref{B1}, \ref{B2} and \ref{A3}, we have $(a) \;  \Exp(\trc(\Ghat^\star)) \lesssim \max\{ d/\sqrt{n},$ $ d^{q_3}\}$, $(b)\; \Exp\|\Ghat^\star-\Ga\|_\hs \lesssim d/\sqrt{n},$  $(c)\; \Exp\|\Ghat^\star\|_\opr \lesssim \max\{d/\sqrt{n},d^{q_1}\}$ and $(d)\;\sm \Exp|{\hat{\Gamma}}^\star_{n,mm} - \Gamma_{n,mm}|\lesssim d/\sqrt{n}$ for $\star \in \{\text{EB, GWB, JMB}\}.$ Furthermore, under the Assumption~\ref{BN}, we get an improved rate $(e)\; \Exp\|\Ghat^\star\|_\opr \lesssim d^{q_1}.$
\end{lemma}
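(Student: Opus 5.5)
Lemma~\ref{lem: boundlrRdboots} is proved coordinatewise, in the same way as the Hilbert-space version Lemma~\ref{lem: boundlrHilb2}, but keeping track of the factor $d$ instead of using a global moment bound. We treat $\star=$ JMB in detail; GWB has the same covariance, $\Ghat^\text{GWB}=\Ghat^\text{JMB}$, and EB differs only through the $V$-statistic centering $V_n$ and the sums over $\mathcal C_n^r$, which create diagonal terms of relative size $O(1/n)$ controlled by the von Mises moment $m_n$ in \ref{B1}. Write $\widehat g^{(i_1)}(X_{i_1}) = g^{(i_1)}(X_{i_1}) + \delta_{i_1}$. Under \ref{A3}, $\E\widehat g^{(i_1)}(X_{i_1})=0$, so $\delta_{i_1}$ is exactly a leave-one-in degenerate remainder plus the centering error $-(U_n-\theta)$.

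First, for each coordinate $m$ we bound $\E|\delta_{i_1,m}|^2\lesssim 1/n$ (and fourth moments $\lesssim n^{-2}$ up to constants) using \ref{B1}. The argument is the usual Hoeffding orthogonality: the degenerate part averaged over $P_{n-1,r-1}$ tuples has variance $O(1/n)$, and $\operatorname{Var}(U_{n,m})=O(1/n)$. Next, decompose entrywise
\[
\Ghat^\star-\Ga = \tfrac1n\textstyle\sum_{i}\{g^{(i)}\otimes g^{(i)}-\E g^{(i)}\otimes g^{(i)}\} + \tfrac1n\sum_i\{g^{(i)}\otimes\delta_i+\delta_i\otimes g^{(i)}+\delta_i\otimes\delta_i\}.
\]
For the first sum, independence gives $\E(\cdot)_{mk}^2\le n^{-1}\max_i\E[g_m^{(i)2}g_k^{(i)2}]\lesssim n^{-1}$ by Cauchy--Schwarz and Jensen from \ref{B1}. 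For the cross terms, Cauchy--Schwarz gives $\E|(\cdot)_{mk}|\lesssim (\E g_m^2)^{1/2}(\E\delta_k^2)^{1/2}\lesssim n^{-1/2}$. Summing over the $d^2$ entries and applying Jensen, $\E\|\Ghat^\star-\Ga\|_\hs\le(\sum_{m,k}\E(\cdot)_{mk}^2)^{1/2}\lesssim d/\sqrt n$, which is (b); the cross terms need second moments of products, obtained via the fourth-moment bounds on $\delta$. Restricting to the diagonal gives (d) directly, $\sum_m\E|\Ghat_{mm}^\star-\Gamma_{n,mm}|\lesssim d/\sqrt n$. Then (a) follows from $\tr\Ghat^\star\le\tr\Ga+\sum_m|\Ghat^\star_{mm}-\Gamma_{n,mm}|$ together with $\tr\Ga\asymp d^{q_3}$. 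Similarly, (c) follows from $\|\Ghat^\star\|_\opr\le\|\Ga\|_\opr+\|\Ghat^\star-\Ga\|_\hs$ and (b), since $\|\Ga\|_\opr\asymp d^{q_1}$.

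The main obstacle is (e), sharpening the operator norm under \ref{BN}. Write $\Ghat^\star \preceq 2\widehat\Sigma_g + 2n^{-1}\sum_i\delta_i\otimes\delta_i$ with $\widehat\Sigma_g=n^{-1}\sum_i g^{(i)}\otimes g^{(i)}$. For $\widehat\Sigma_g$, use the sub-Gaussian condition \ref{BN} in whitened coordinates $\Ga^{-1/2}g^{(i)}$ restricted to the range of $\Ga$, together with a standard covariance concentration bound (Koltchinskii--Lounici type, valid for independent non-identical sub-Gaussian vectors via an $\varepsilon$-net). This gives $\E\|\widehat\Sigma_g\|_\opr\lesssim\|\Ga\|_\opr(1+\sqrt{r_e/n}+r_e/n)$ with effective rank $r_e=\tr\Ga/\|\Ga\|_\opr\le d$. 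The resulting excess $\tr(\Ga)/n\le d/n$ is of lower order; here one needs $d\lesssim n\,d^{q_1}$, implicit in the regime $\Delta_{n,d}\le1$. The remainder term is bounded by its trace, $\E\tr(n^{-1}\sum\delta_i\otimes\delta_i)\lesssim d/n$, which again is dominated by $d^{q_1}$ in that regime. The delicate point is that \ref{BN} controls only $g^{(i)}$, not $\delta_i$; we therefore keep $\delta_i$ on the crude trace bound rather than attempting sub-Gaussian control. For EB the same steps apply with an extra additive $O(d/n)$ from the diagonal tuples.
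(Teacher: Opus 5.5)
Your proposal is correct and follows essentially the paper's route. Parts (a)--(d) are the coordinatewise version of the proof of Lemma~\ref{lem: boundlrHilb2}. For (e) you, like the paper, reduce to the empirical second-moment matrix $n^{-1}\sum_i g^{(i)}\otimes g^{(i)}$ plus a remainder controlled by its trace ($\lesssim d/n$). You then apply an effective-rank covariance concentration bound under \ref{BN} and absorb $d/n$ into $d^{q_1}$ in the regime $\Delta_{n,d}<1$.

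The differences are cosmetic. Your entrywise cross-term bound needs fourth moments of $\delta_i$, whereas the paper uses $\E\|n^{-1}\sum_i g^{(i)}\otimes\eta_i\|_{\mathrm{HS}}\le n^{-1}\sum_i(\E\|g^{(i)}\|^2\,\E\|\eta_i\|^2)^{1/2}$. Your $2aa^\top+2bb^\top$ domination replaces the paper's matrix Cauchy--Schwarz for the cross term. The growth condition you call implicit is derived explicitly in the paper: $\|\Gamma_n\|_{\mathrm F}^2\le\|\Gamma_n\|_{\mathrm{op}}\operatorname{tr}(\Gamma_n)$ gives $2q_2\le q_1+q_3$, so the third term of $\Delta_{n,d}$ forces $n\gtrsim d^{1-q_1}$.

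One caution: the bound $\|\Gamma_n\|_{\mathrm{op}}(1+\sqrt{r_e/n}+r_e/n)$ comes from generic chaining, not from an $\varepsilon$-net. Whitening plus a net gives only $\operatorname{rank}(\Gamma_n)\le d$ in place of $r_e$, i.e.\ an excess of order $d^{1+q_1}/n$. The regime $\Delta_{n,d}<1$ can allow $d^{1-q_1}\lesssim n\ll d$ with $q_3>q_1$, where neither that excess nor the cruder bound $\|\cdot\|_{\mathrm{op}}\le\operatorname{tr}(\cdot)$ is dominated by $d^{q_1}$. You therefore need the chaining version for independent, non-identically distributed vectors, which is exactly the external input the paper also cites at this step.
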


\textbf{Proof of Proposition \ref{prop: BA}.}
    The proof follows similarly to Proposition \ref{prop: HBA} using the assumptions \ref{B1}, \ref{B2} and Lemma \ref{lem: boundlrRdboots}.
    We skip the details for the interest of space.
\qed

\newpage
\section{Appendix: Proofs of results in Section~\ref{sec: stat app}}

\textbf{Proof of Proposition \ref{prop: power}.}
The first claim for empirical size follows immediately from Propositions
\ref{prop: GA} and \ref{prop: BA} and Proposition 3.2 of
\cite{koike2019mixed}.

For the second claim, we denote
\[
    \mu_n=\frac{\sqrt{n}}{r}\theta,
    \qquad
    T_n^{(0)}=T_n-\mu_n.
\]
Let
\[
    \Gamma_u
    =
    \Exp(T_n^{(0)}\otimes T_n^{(0)})
\]
be the covariance operator of the centered statistic, and let $Z_n$
be a centered Gaussian random element with covariance operator
$\Ga$, where $\Ga$ is the covariance operator of the linear
projection of $T_n^{(0)}$. Set
\[
    \tau_n
    =
    \{\trc(\Ga^2)\}^{1/2}
    =
    \|\Ga\|_\hs.
\]

Since $\Delta_n^\star\to0$ (and consequently $\Delta_n\to0$), there
exists a sequence of positive numbers $\varepsilon_n\to0$ such that
\[
    d_{\mathrm{Kol}}(\|T_n^{(0)}\|,\|Z_n\|)
    \leq\varepsilon_n,
    \qquad
    \Prob(\mathcal A_n^c)
    \leq\varepsilon_n,
\]
where
\[
    \mathcal A_n
    =
    \left\{
        d_{\mathrm{Kol}}^B
        (\|T_n^\star\|,\|Z_n\|)
        \leq\varepsilon_n
    \right\}.
\]
Furthermore, for all sufficiently large \(n\), \(1-\alpha+\varepsilon_n<1\), on $\mathcal A_n$,
\begin{align*}
    \Prob^\star\bigl(
        \|T_n^\star\|
        \leq q_n(1-\alpha+\varepsilon_n)
    \bigr)
    &\geq
    \Prob\bigl(
        \|Z_n\|
        \leq q_n(1-\alpha+\varepsilon_n)
    \bigr)
    -\varepsilon_n
    =1-\alpha,
\end{align*}
where for \(\alpha\in(0,1)\), $q_n(\alpha)$ is defined as the lower $\alpha$-th quantile of the distribution of $\|Z_n\|$. It follows from the definition of the bootstrap lower quantile that
\begin{equation}
    q_n^\star(1-\alpha)
    \leq
    q_n(1-\alpha+\varepsilon_n)
    \qquad\text{on }\mathcal A_n.
    \label{eq:bootstrap-quantile-power}
\end{equation}

We next use the Gaussian quadratic-form inequality
\begin{equation}
    \Prob\left(
        \left|
            \|Z_n\|^2-\trc(\Ga)
        \right|
        \geq t
    \right)
    \leq
    2\exp\left[
        -c\min\left\{
            \frac{t^2}{\tau_n^2},
            \frac{t}{\|\Ga\|_{\opr}}
        \right\}
    \right],
    \qquad t>0.
    \label{eq:GaussianQuadratic}
\end{equation}
Since $\|\Ga\|_{\opr}\leq\tau_n$, for every fixed $\alpha$
there exists a constant $C>0$ such that, for all sufficiently
large $n$,
\[
    \Prob\left(
        \|Z_n\|^2
        \leq
        \trc(\Ga)+C\tau_n
    \right)
    \geq 1 - \alpha+\varepsilon_n.
\]
Consequently,
\[
    q_n^2(1-\alpha+\varepsilon_n)
    \leq
    \trc(\Ga)+C\tau_n.
\]
Combining this with \eqref{eq:bootstrap-quantile-power}, we obtain
\begin{equation}
    \{q_n^\star(1-\alpha)\}^2
    \leq
    \trc(\Ga)+C\tau_n
    \qquad\text{on }\mathcal A_n.
    \label{eq:BootstrapQuantileSquared}
\end{equation}

Define
\[
    W_n
    =
    \|T_n^{(0)}\|^2-\trc(\Ga),
    \qquad
    V_n
    =
    2\langle\mu_n,T_n^{(0)}\rangle,
\]
and set
\[
    s_n
    =
    \|\mu_n\|^2-C\tau_n.
\]
Notice that $W_n$ is centered at $\trc(\Ga)$ rather than at
\[
    \Exp\|T_n^{(0)}\|^2
    =
    \trc(\Gamma_u).
\]
This is intentional, since $\Ga$ is the covariance operator of
the Gaussian comparator. We then have the exact decomposition
\begin{align*}
    \|T_n\|^2
    =
    \|T_n^{(0)}+\mu_n\|^2 
    =
    \trc(\Ga)
    +\|\mu_n\|^2
    +W_n+V_n.
\end{align*}
It follows from \eqref{eq:BootstrapQuantileSquared} that
\begin{align}
    \Prob_{H_a}\bigl(
        \|T_n\|\leq q_n^\star(1-\alpha)
    \bigr)
    &\leq
    \Prob_{H_a}(W_n+V_n\leq-s_n)
    +\Prob(\mathcal A_n^c)
    \notag\\
    &\leq
    \Prob_{H_a}(|W_n|\geq s_n/2)
    +
    \Prob_{H_a}(|V_n|\geq s_n/2)
    +\varepsilon_n.
    \label{eq:PowerDecomposition}
\end{align}

We first control $W_n$. By the Gaussian approximation result,
\begin{align*}
    \Prob_{H_a}(|W_n|\geq s_n/2)
    &\leq
    \Prob\left(
        \left|
            \|Z_n\|^2-\trc(\Ga)
        \right|
        \geq s_n/2
    \right)
    +2\varepsilon_n \\
    &\leq
    2\exp\left[
        -c\min\left\{
            \frac{s_n^2}{\tau_n^2},
            \frac{s_n}{\|\Ga\|_{\opr}}
        \right\}
    \right]
    +2\varepsilon_n,
\end{align*}
where the constant $c>0$ may change from line to line.

By the assumed signal-strength condition, for some sequence
$a_n\to\infty$,
\[
    \|\theta\|
    \geq
    C \frac{r}{\sqrt{n}}
    \{\trc(\Ga^2)\}^{1/4}a_n
    =
    C \frac{r}{\sqrt{n}}\tau_n^{1/2}a_n.
\]
Hence
\[
    \frac{\|\mu_n\|^2}{\tau_n}
    =
    \frac{n\|\theta\|^2}{r^2\tau_n}
    \to\infty,
\]
and therefore
\[
    \frac{s_n}{\tau_n}
    =
    \frac{\|\mu_n\|^2}{\tau_n}-C
    \to\infty.
\]
Since $\|\Ga\|_{\opr}\leq\tau_n$, it follows that
\begin{equation}
    \Prob_{H_a}(|W_n|\geq s_n/2)
    \to0.
    \label{eq:WControl}
\end{equation}

It remains to control $V_n$. By the Hoeffding decomposition and the
orthogonality of its components,
\[
    \Gamma_u
    =
    \Ga+\Gamma_R,
\]
where $\Gamma_R$ is the covariance operator of the degenerate
remainder. Note that as $\tr(\Gamma_{n,R}) = \E\|R_n\|^2,$ then it is straightforward to show that
\begin{equation}
    \|\Gamma_u\|_{\opr}
    \leq C\tau_n,
    \label{eq:CovarianceComparison}
\end{equation}
for all sufficiently large $n$.

Consequently,
\begin{align*}
    \Exp_{H_a}V_n^2
    =
    4\langle\mu_n,\Gamma_u\mu_n\rangle 
    \leq
    4\|\Gamma_u\|_{\opr}\|\mu_n\|^2 
    \leq
    C\tau_n\|\mu_n\|^2.
\end{align*}
Thus, by Chebyshev's inequality,
\begin{align*}
    \Prob_{H_a}(|V_n|\geq s_n/2)
    \leq
    C\frac{\tau_n\|\mu_n\|^2}{s_n^2} 
    =
    C\frac{\tau_n}{s_n}
    \frac{\|\mu_n\|^2}{s_n}
    \to0,
\end{align*}
because $s_n/\tau_n\to\infty$ and
$\|\mu_n\|^2/s_n\to1$.

Combining this with \eqref{eq:PowerDecomposition} and
\eqref{eq:WControl}, we obtain
\[
    \Prob_{H_a}\bigl(
        \|T_n\|\leq q_n^\star(1-\alpha)
    \bigr)
    \to0.
\]
The desired conclusion follows from
\[
    \Prob_{H_a}\bigl(
        \|T_n\|>q_n^\star(1-\alpha)
    \bigr)
    =
    1-
    \Prob_{H_a}\bigl(
        \|T_n\|\leq q_n^\star(1-\alpha)
    \bigr).
\]
\qed

\noindent\textbf{Proof of Proposition \ref{prop:kendall-minimax}.} Set
$
\rho_n
=
c_n\frac{d^{1/4}}{\sqrt n}.
$
Therefore, for \(d/n\to0\), we have
$
\rho_n^2
=
c_n^2\frac{\sqrt d}{n}
=
c_n^2\frac{d/n}{\sqrt d}
\to 0.
$ Let \(\varepsilon_1,\ldots,\varepsilon_p\) be independent Rademacher
random variables and define
$
v_\varepsilon
=
p^{-1/2}
(\varepsilon_1,\ldots,$ $\varepsilon_p)^\top
\in\mathbb R^p.
$
Then \(\|v_\varepsilon\|=1\). 

Choose \(a_n>0\), so that
$
\frac{2\sqrt d}{\pi}
\arcsin\left(\frac{a_n}{p+a_n}\right)
=
\rho_n
$ 
and define
$
s_n
=
\sin\left(\frac{\pi\rho_n}{2\sqrt d}\right).
$ Then, we have
$
a_n=\frac{ps_n}{1-s_n}.
$
Since \(\rho_n/\sqrt d = o(1)\), we have 
$
s_n
=
\frac{\pi\rho_n}{2\sqrt d}\{1+o(1)\}.
$

Moreover,
\[
\frac{p}{\sqrt d}
=
\left(\frac{2p}{p-1}\right)^{1/2}
\asymp1.
\]
Consequently, we have
\[
a_n
=
\frac{\pi p}{2\sqrt d}\rho_n\{1+o(1)\}
\asymp\rho_n.
\]

Next, for every realization of \(\varepsilon\), define the \(p\times p\) matrix
\[
R_\varepsilon
=
\frac{I_p+a_nv_\varepsilon v_\varepsilon^\top}
     {1+a_n/p}.
\]
Since \((v_\varepsilon)_j^2=1/p\), every diagonal entry of
\(R_\varepsilon\) equals one. Its eigenvalue in the direction
\(v_\varepsilon\) is 
$
\frac{1+a_n}{1+a_n/p},
$
whereas each of its remaining \(p-1\) eigenvalues are
$
\frac{1}{1+a_n/p}.
$
Thus, \(R_\varepsilon\) is a positive-definite correlation matrix.

Furthermore, note that
\[
\|R_\varepsilon-I_p\|_{\opr}
=
\frac{a_n(p-1)}{p+a_n}
\le a_n.
\]
Hence,
$
\|R_\varepsilon-I_p\|_{\opr}\le\eta
$ for all sufficiently large $n$.

For \(j\ne k\),
\[
(R_\varepsilon)_{jk}
=
\frac{a_n}{p+a_n}\varepsilon_j\varepsilon_k.
\]
The Gaussian identity for Kendall's tau therefore gives
\[
\tau_{jk}(R_\varepsilon)
=
\frac{2}{\pi}\arcsin\bigl((R_\varepsilon)_{jk}\bigr)
=
\frac{2}{\pi}\varepsilon_j\varepsilon_k
\arcsin\left(\frac{a_n}{p+a_n}\right).
\]
Also, we have
\[
\begin{aligned}
\|\tau(R_\varepsilon)\|
&=
\left\{
\sum_{1\le j<k\le p}
\tau_{jk}(R_\varepsilon)^2
\right\}^{1/2} \\
&=
\frac{2\sqrt d}{\pi}
\arcsin\left(\frac{a_n}{p+a_n}\right)
=
\rho_n.
\end{aligned}
\]
It follows that
$
P_{\varepsilon,n}
:=
N_p(0,R_\varepsilon)
\in
\cG_n^{\mathrm{loc}}(\eta,\rho_n)
$
for all sufficiently large $n$.

Next, set
\[
\sigma_n^2
=
\left(1+\frac{a_n}{p}\right)^{-1}
\qquad \text{and} \qquad
P_{0,n}
=
N_p(0,\sigma_n^2I_p).
\]
Because \(a_n\to0\), we have \(\sigma_n^2\to1\). Hence, by the
model-class assumption, we have
$
P_{0,n}\in\cP_{0,n}
$
for all sufficiently large \(n\). Indeed, \(P_{0,n}\) has independent
coordinates and therefore
\(\tau(P_{0,n})=0_d\). Therefore, under the common invertible transformation \(x\mapsto x/\sigma_n\), the
pair \((P_{0,n},P_{\varepsilon,n})\) becomes
$(
N_p(0,I_p),
N_p\bigl(0,I_p+a_nv_\varepsilon v_\varepsilon^\top\bigr)
)$
respectively. 

Next, let \(L_\varepsilon\) be the likelihood ratio of
\(P_{\varepsilon,n}\) with respect to \(P_{0,n}\), and define
$
\overline L_n
=
\E_\varepsilon L_\varepsilon,
$
where the expectation is with respect to the independent Rademacher
variables. Further, let \(\varepsilon'\) be an independent copy of \(\varepsilon\)
and set
$
q_{\varepsilon,\varepsilon'}
=
v_\varepsilon^\top v_{\varepsilon'}
=
\frac1p\sum_{j=1}^p\varepsilon_j\varepsilon_j'.
$

We first calculate the cross-moment of two likelihood ratios. For a unit
vector \(v\in\mathbb R^p\), write
$
Q_v=I_p+a_nvv^\top,
$
then
$
Q_v^{-1}
=
I_p-\frac{a_n}{1+a_n}vv^\top.
$

Consequently, the one-observation likelihood ratio with respect to
\(N_p(0,I_p)\) is
\[
\ell_v(x)
=
(1+a_n)^{-1/2}
\exp\left\{
\frac{a_n}{2(1+a_n)}(v^\top x)^2
\right\}.
\]
For unit vectors \(v,w\), put \(q=v^\top w\) and
$
b_n=\frac{a_n}{1+a_n}.
$
For all sufficiently large \(n\), \(a_n<1\), and the Gaussian
quadratic-form identity gives
\[
\E_0[\ell_v\ell_w]
=
(1+a_n)^{-1}
\det\left\{
I_p-b_n(vv^\top+ww^\top)
\right\}^{-1/2}.
\]
The rank-two determinant satisfies
\[
\begin{aligned}
\det\left\{
I_p-b_n(vv^\top+ww^\top)
\right\}
=
(1-b_n)^2-b_n^2q^2 
=
\frac{1-a_n^2q^2}{(1+a_n)^2}.
\end{aligned}
\]
Therefore,
\[
\E_0[\ell_v\ell_w]
=
(1-a_n^2q^2)^{-1/2},
\]
and it follows that
\[
\E_{0,n}\overline L_n^2
=
\E_{\varepsilon,\varepsilon'}
\left(
1-a_n^2q_{\varepsilon,\varepsilon'}^2
\right)^{-n/2}.
\]
For all sufficiently large \(n\), \(a_n<1/2\). Since
\[
-\log(1-u)\le\frac{u}{1-u},
\qquad 0\le u<1,
\]
we obtain
\[
\E_{0,n}\overline L_n^2
\le
\E\exp\left\{
\frac{na_n^2}
     {2(1-a_n^2)}
q_{\varepsilon,\varepsilon'}^2
\right\}.
\]

Let
$
S_p
=
\sum_{j=1}^p\varepsilon_j\varepsilon_j'
$
and define
$
\lambda_n
=
\frac{na_n^2}{2p(1-a_n^2)}.
$
Because \(q_{\varepsilon,\varepsilon'}=S_p/p\), the preceding bound becomes
\[
\E_{0,n}\overline L_n^2
\le
\E\exp\left(
\lambda_n\frac{S_p^2}{p}
\right).
\]

Let \(G\sim N(0,1)\) be independent of \(S_p\). For
\(0\le\lambda<1/2\), Gaussian linearization and the inequality
\(\cosh(x)\le e^{x^2/2}\) give
\[
\begin{aligned}
\E\exp\left(\lambda\frac{S_p^2}{p}\right)
&=
\E_G\E\exp\left(
\sqrt{\frac{2\lambda}{p}}\,G S_p
\right)\\
&=
\E_G
\prod_{j=1}^p
\cosh\left(
\sqrt{\frac{2\lambda}{p}}\,G
\right)\\
&\le
\E_G e^{\lambda G^2}
=
(1-2\lambda)^{-1/2}.
\end{aligned}
\]
Using \(a_n\asymp\rho_n\), we have
\[
\lambda_n
=
O\left(\frac{n\rho_n^2}{p}\right)
=
O\left(
c_n^2\frac{\sqrt d}{p}
\right)
=
O(c_n^2) = o(1),
\]
where we used \(\sqrt d/p\asymp1\). Hence, for all sufficiently large
\(n\), \(\lambda_n<1/2\), and
\[
\E_{0,n}\overline L_n^2
\le
(1-2\lambda_n)^{-1/2}.
\]
On the other hand,
\[
\E_{0,n}\overline L_n^2
\ge
\left(\E_{0,n}\overline L_n\right)^2
=
1.
\]
Therefore,
\[
\E_{0,n}\overline L_n^2\to1.
\]
Let
\[
\overline P_n
=
\E_\varepsilon P_{\varepsilon,n}.
\]
Since \(\overline L_n\) is the likelihood ratio of \(\overline P_n\)
with respect to \(P_{0,n}\),
\[
\chi^2\left(
\overline P_n,P_{0,n}
\right)
=
\E_{0,n}\overline L_n^2 = 1 + o(1).
\]
Therefore, from \cite{baraud01}, it follows that 
\[
\left\|
\overline P_n-P_{0,n}
\right\|_{\TV} = o(1).
\]
Let \(\phi_n\) satisfy the nominal level condition. Since
\(P_{0,n}\in\cP_{0,n}\), then we have
\[
\E_{P_{0,n}}\phi_n
\le
\alpha+o(1).
\]
Consequently, we have
\[
\begin{aligned}
\E_{\overline P_n}\phi_n
&\le
\E_{P_{0,n}}\phi_n
+
\left\|
\overline P_n-P_{0,n}
\right\|_{\TV}\\
&\le
\alpha+o(1).
\end{aligned}
\]
Moreover,
\[
\E_{\overline P_n}\phi_n
=
\E_\varepsilon
\E_{P_{\varepsilon,n}}\phi_n.
\]
Because every \(P_{\varepsilon,n}\) belongs to
\(\cG_n^{\mathrm{loc}}(\eta,\rho_n)\), we conclude that
\[
\inf_{P\in\cG_n^{\mathrm{loc}}(\eta,\rho_n)}
\E_{P}\phi_n
\le
\E_{\overline P_n}\phi_n
\le
\alpha+o(1).
\]
Substituting
$
\rho_n=c_n\frac{d^{1/4}}{\sqrt n}
$
proves the asserted lower bound. The final claim follows by taking any
separation sequence
$
r_n=o\left(\frac{d^{1/4}}{\sqrt n}\right)
$
and setting \(c_n=r_n\sqrt n/d^{1/4} = o(1)\).
\qed

\newpage
\section{Appendix: Proofs of auxiliary lemmas}

\textbf{Proof of Lemma~\ref{lem: boundlrHilb1}.}
\noindent
First note that, for $l = 2,3,4,$ and every $\bm i=(i_1,\ldots,i_r)\in
\mathcal I_n^r$,
\begin{equation*}
\begin{split}
\E\left\|
h_{\bm i}(\xir)
\right\|^l
=
\E\left\|
\widetilde h(\xir)-\theta_{\bm i}
\right\|^l \lesssim
\E\left\|
\widetilde h(\xir)
\right\|^l
+
\|\theta_{\bm i}\|^l \lesssim
\E\left\|
\widetilde h(\xir)
\right\|^l
\lesssim 1,
\end{split}
\end{equation*}
where the last inequality follows from Assumption~\textnormal{\ref{A1}}.

For every $\bm i\in\mathcal I_n^r$ and $c=1,\ldots,r$,
conditional Jensen's inequality gives
\begin{equation*}
\begin{split}
\E\left\|
\vartheta_{\bm i,c}(X_{i_c})
\right\|^l
&=
\E\left\|
\E\left[
h_{\bm i}(\xir)
\mid X_{i_c}
\right]
\right\|^l \leq
\E\left\|
h_{\bm i}(\xir)
\right\|^l
\lesssim 1.
\end{split}
\end{equation*}
Using the definition of $g^{(i_1)}(X_{i_1})$ and Jensen's
inequality once more, we obtain, for $l=2,3,4$,
\begin{equation*}
\begin{split}
\E\left\|
g^{(i_1)}(X_{i_1})
\right\|^l
&\leq
\frac{1}{P_{n-1,r-1}}
\sum_{(i_2,\ldots,i_r)\in
\mathcal I_{n,-i_1}^{r-1}}
\E\left\|
\vartheta_{\bm i,1}(X_{i_1})
\right\|^l \lesssim 1.
\end{split}
\end{equation*}

Since $g^{(i_1)}(X_{i_1})$, $i_1=1,\ldots,n$, are independent
and mean zero, it follows that
\begin{equation*}
\E\|L_n\|^2
=
\frac{1}{n}
\sum_{i_1=1}^n
\E\left\|
g^{(i_1)}(X_{i_1})
\right\|^2
=
\tr(\Gamma_n)
\lesssim 1.
\end{equation*}

It remains to bound the remainder. For
$\bm i=(i_1,\ldots,i_r)\in\mathcal I_n^r$, write
\begin{equation*}
F_{\bm i}
:=
f^{\bm i}(X_{i_{[1:r]}})
=
h_{\bm i}(\xir)
-
\sum_{c=1}^r
\vartheta_{\bm i,c}(X_{i_c}).
\end{equation*}
By the preceding moment bounds,
\begin{equation*}
\begin{split}
\E\|F_{\bm i}\|^2
&\leq
(r+1)
\left\{
\E\left\|
h_{\bm i}(\xir)
\right\|^2
+
\sum_{c=1}^r
\E\left\|
\vartheta_{\bm i,c}(X_{i_c})
\right\|^2
\right\} \lesssim 1,
\end{split}
\end{equation*}
uniformly over $\bm i\in\mathcal I_n^r$.

We next consider the covariance between two remainder kernels.
If $\bm i,\bm j\in\mathcal I_n^r$ have disjoint index sets, then
$F_{\bm i}$ and $F_{\bm j}$ are independent and mean zero, and
hence
\begin{equation*}
\E\langle F_{\bm i},F_{\bm j}\rangle=0.
\end{equation*}
Suppose instead that the two tuples have exactly one common
index, say $i_a=j_b=c$. Conditional on $X_c$, the remaining
observations entering $F_{\bm i}$ and $F_{\bm j}$ are independent.
Moreover, the first-order degeneracy of the remainder kernel gives
\begin{equation*}
\E[F_{\bm i}\mid X_c]=0,
\qquad
\E[F_{\bm j}\mid X_c]=0.
\end{equation*}
It follows that
\begin{equation*}
\begin{split}
\E\langle F_{\bm i},F_{\bm j}\rangle
=
\E\left[
\E\left[
\langle F_{\bm i},F_{\bm j}\rangle
\mid X_c
\right]
\right]
=
\E\left[
\left\langle
\E[F_{\bm i}\mid X_c],
\E[F_{\bm j}\mid X_c]
\right\rangle
\right]
=0.
\end{split}
\end{equation*}
Thus, only pairs of tuples having at least two common indices
contribute to $\E\|R_n\|^2$.

For tuples having $c\geq2$ common indices, the Cauchy--Schwarz
inequality gives
\begin{equation*}
\left|
\E\langle F_{\bm i},F_{\bm j}\rangle
\right|
\leq
\left(\E\|F_{\bm i}\|^2\right)^{1/2}
\left(\E\|F_{\bm j}\|^2\right)^{1/2}
\lesssim 1.
\end{equation*}
The number of ordered pairs
$(\bm i,\bm j)\in\mathcal I_n^r\times\mathcal I_n^r$
having exactly $c$ common indices is bounded by
$C_r n^{2r-c}$. Therefore,
\begin{equation*}
\begin{split}
\E\|R_n\|^2
=
\frac{n}{r^2P_{n,r}^2}
\sum_{\iinr}
\sum_{\jjnr}
\E\langle F_{\bm i},F_{\bm j}\rangle \lesssim
\frac{n}{P_{n,r}^2}
\sum_{c=2}^r n^{2r-c} \lesssim
\sum_{c=2}^r n^{1-c}
\lesssim
\frac{1}{n},
\end{split}
\end{equation*}
where we used that $r$ is fixed and $P_{n,r}\asymp n^r$.
This proves the claim.
\qed

\textbf{Proof of Lemma~\ref{lem: boundlrRd}.} 
    By Assumption~\textnormal{\ref{B2}}, Jensen's inequality, and the
definition
$
h_{\bm i,m}(\xir)
=
\widetilde h_m(\xir)-\theta_{\bm i,m},
$
we have, for $l=2,3,4$,
\begin{equation*}
\max_{1\leq m\leq d}
\max_{\iinr}
\E\left|
h_{\bm i,m}(\xir)
\right|^l
\lesssim 1.
\end{equation*}
Therefore,
\begin{equation*}
\begin{split}
\max_{\iinr}
\E\left\|
h_{\bm i}(\xir)
\right\|^l
&\leq
d^{l/2-1}
\max_{\iinr}
\sum_{m=1}^d
\E\left|
h_{\bm i,m}(\xir)
\right|^l\\
&\lesssim d^{l/2}.
\end{split}
\end{equation*}
Conditional Jensen's inequality and the definition of
$g^{(i_1)}(X_{i_1})$ then give
\begin{equation*}
\max_{1\leq i_1\leq n}
\E\left\|
g^{(i_1)}(X_{i_1})
\right\|^l
\lesssim d^{l/2},
\qquad l=2,3,4.
\end{equation*}
Similarly, for
$
F_{\bm i}
=
f^{\bm i}(X_{i_{[1:r]}}),
$
we have
\begin{equation*}
\max_{\iinr}
\E\|F_{\bm i}\|^2
\lesssim d.
\end{equation*}
    Then the rest of the proof follows directly from the proof of Lemma~\ref{lem: boundlrHilb1} in $\R^d$.

\qed

\textbf{Proof of Lemma~\ref{lem: boundlrHilb2}.}
We first prove part $(i)$ for $\text{GWB}$. Note that for any $k=2,\dots,r,$ we can write
\begin{equation*}
\begin{split}
    &\E^\star \|T_n^{(k)}\|^2\\
    &= \frac{ {\binom{r}{k}}^2 }{r^2n {P^2_{n-1,r-1}}} \sum_{(i_1,\dots, i_k) \in \mathcal{I}_n^k} \sum_{(j_1,\dots, j_k) \in \mathcal{I}_n^k} \E^\star \bigg(\prod_{l=1}^k (w_{i_l}-1)(w_{j_l}-1)\bigg) \\&\hspace{3cm}
       \sum_{(i_{k+1},\dots,i_r) \in \mathcal{I}_{n,-\{i_1,i_2,\dots,i_k\}}^{r}} \sum_{(j_{k+1},\dots,j_r) \in \mathcal{I}_{n,-\{j_1,j_2,\dots,j_k\}}^{r}} \la \widetilde h(\xir) , \widetilde h(\xjr) \ra\\
       &= \frac{ {\binom{r}{k}}^2 }{r^2n {P^2_{n-1,r-1}}} \sum_{(i_1,\dots, i_k) \in \mathcal{I}_n^k} \E^\star \bigg(\prod_{l=1}^k (w_{i_l}-1)^2\bigg) \\&\hspace{2cm}
       \sum_{(i_{k+1},\dots,i_r) \in \mathcal{I}_{n,-\{i_1,i_2,\dots,i_k\}}^{r}} \sum_{(j_{k+1},\dots,j_r) \in \mathcal{I}_{n,-\{i_1,i_2,\dots,i_k\}}^{r}} \la \widetilde h(\xir) , \widetilde h(X_{i_{[1:k]}}, X_{j_{[(k+1):r]}}) \ra\\
       &= \frac{ {\binom{r}{k}}^2 }{r^2n {P^2_{n-1,r-1}}} \sum_{\iinr} \sum_{(j_{k+1},\dots,j_r) \in \mathcal{I}_{n,-\{i_1,i_2,\dots,i_k\}}^{r}} \la \widetilde h(\xir) , \widetilde h(X_{i_{[1:k]}}, X_{j_{[(k+1):r]}}) \ra\\
\end{split}
\end{equation*}
where the second equality follows from as $w_1,\dots,w_n$ are independently distributed. Consequently, using Assumption \ref{A1}, we get
\begin{equation*}
    \Exp\|R_n^\text{GWB}\|^2 = \sum_{k=2}^r \Exp\|T_n^{(k)}\|^2 \lesssim \sum_{k=2}^r 1/n^{k-1}  \lesssim 1/n
\end{equation*}
Similarly,
\begin{equation*}
\begin{split}
    & \Exp \|L_n^\text{GWB}\|^2\\
    & = \Exp \Bigg\{\frac{n}{\npr^2} \sni \sum_{\iinri}\sum_{\jjnri} \Big\la \widetilde h(\xiir) - U_n, \widetilde h(\xijr) - U_n \Big\ra \E^\star (w_{i_1}-1)^2 \Bigg\} \\
    & = \frac{n}{\npr^2} \sni \sum_{\iinri}\sum_{\jjnri} \Exp \Big\la \widetilde h(\xiir) - U_n, \widetilde h(\xijr) - U_n \Big\ra\\
    & \lesssim\;\underset{\iinr}{\max} \Exp\|\widetilde h(\xir)-U_n\|^2\\
    & \lesssim\; \underset{\iinr}{\max} \Exp\|\widetilde h(\xir)\|^2 + \E\|U_n\|^2 \lesssim 1,
\end{split}
\end{equation*}
where the first inequality follows from the Cauchy-Schwarz inequality, the second inequality follows Jensen's inequality from tri and the final inequality follows from Assumption \ref{A1}.

\bigskip
Next, to prove part (i) for EB, note that for $l=2,3,4,$ using Assumption \ref{A1}, we have
\begin{equation*}
    \Exp\|g^{\bm X}(\xi_1)\|^l \lesssim 1,
\end{equation*}
following \eqref{egx}.

As $\xi_1,\dots,\xi_n$ are independent and identically distributed and $\E^\star (f^{\bm X}(\xixr)) = 0$ almost surely, it is easy to verify that for $c=0,1,$
\begin{equation*}
    \E^\star \Big\la f^{\bm X}(\ixrr),f^{\bm X}(\ixrrr) \Big\ra = 0,
\end{equation*}
and for $c\geq 2,$
\begin{equation*}
\begin{split}
    &\E^\star \Big\la f^{\bm X}(\ixrr),f^{\bm X}(\ixrrr) \Big\ra \\&= \frac{1}{n^{2r-c}}\;\sum_{\iicrc} \Big\la f^{(i_{[1:c]},i_{[(c+1):r]})}(\xcri), f^{(i_{[1:c]},i_{[(r+1):(2r-c)]})}(\xcrii) \Big\ra.
\end{split}
\end{equation*}
Therefore, for any $m=1,\dots,d,$ almost surely
\begin{equation*}
\begin{split}
    &\E^\star \|R_n^\text{EB}\|^2\\
    &= \frac{n}{r^2\npr^2} \sum_{\iinr} \sum_{\jjnr} \E^\star \Big\la f^{\bm X}(\ixr), f^{\bm X}(\ixjr) \Big\ra\\
    &= \frac{n}{r^2\npr^2} \sum_{c=0}^r \frac{\cpr^2}{c!}\sum_{\icnri} \E^\star \Big\la f^{\bm X}(\ixr,\ixcor), f^{\bm X}(\ixr,\ixccr) \Big\ra\\
    &= \frac{n}{r^2\npr^2} \sum_{c=2}^r \frac{\cpr^2}{c!}\sum_{\icnri} \E^\star \Big\la f^{\bm X}(\ixr,\ixcor), f^{\bm X}(\ixr,\ixccr) \Big\ra\\
    &= \frac{n}{r^2 \npr^2}\; \sum_{c=2}^r \frac{\cpr^2\; \nprc}{c!\;n^{2r-c}} \sum_{\iicrc}\Big\la f^{(i_{[1:c]},i_{[(c+1):r]})}(\xcri), \\ &\hspace{8cm} f^{(i_{[1:c]},i_{[(r+1):(2r-c)]})}(\xcrii) \Big\ra,
\end{split}
\end{equation*}
and using Assumption \ref{A1}, it is straightforward to show that
\begin{equation*}
\begin{split}
    \Exp\|R_{n}^\text{EB}\|^2 &\lesssim  \frac{1}{ n^{2r-1}}\; \sum_{c=2}^r \sum_{\iicrc} \Exp\Big\la f^{(i_{[1:c]},i_{[(c+1):r]})}(\xcri), \\ &\hspace{6cm} f^{(i_{[1:c]},i_{[(r+1):(2r-c)]})}(\xcrii) \Big\ra\\& \lesssim 1/n.
\end{split}
\end{equation*}
Similarly, we get, almost surely
\begin{equation*}
\begin{split}
    \E^\star \|L_n^\text{EB}\|^2 &= \E^\star \|g^{\bm X}(\xi_1)\|^2 \lesssim 1,
\end{split}
\end{equation*}
and from Assumption \ref{A1}, $\Exp\|L_n^\text{EB}\|^2 \lesssim 1.$

 \bigskip
 
Next we prove part (ii) of the lemma. We first consider the jackknife multiplier bootstrap. Subtracting the deterministic constant \(\theta\) from the observable kernel does not change any of the bootstrap coefficients. We may therefore work with the globally centered kernel
\[ H(x_1,\ldots,x_r):=\widetilde h(x_1,\ldots,x_r)-\theta \]
and with
\[ U_n^\circ:=U_n-\theta =\frac{1}{P_{n,r}} \sum_{\iinr} H(\xir). \]
Assumption \ref{A1} and Jensen's inequality imply
\begin{equation} \label{8.1}
    \max_{\boldsymbol i\in C_n^r} E\|H(\xir)\|^4\lesssim 1.    
\end{equation}

For $i_1=1,\ldots,n$, define the leave-one-in average
\[
\overline g^{(i_1)}(X_{i_1})
:=
\frac{1}{P_{n-1,r-1}}
\sum_{\jjnri}
H(\xijr),
\]
and its conditionally centered remainder
\begin{equation}
    \eta_{i_1}(X_{i_1})
:=
\overline g^{(i_1)}(X_{i_1})
-E\!\left[\overline g^{(i_1)}(X_{i_1})\mid X_{i_1}\right].
\label{8.2}
\end{equation}

Therefore
\[
\begin{aligned}
E\!\left[\overline g^{(i_1)}(X_{i_1})\mid X_{i_1}\right]
=
\frac{1}{P_{n-1,r-1}}
\sum_{\jjnri}
\left\{
E[\widetilde h(\xijr)\mid X_{i_1}]-\theta
\right\} =
g^{(i_1)}(X_{i_1})+\overline\theta_{i_1}-\theta.
\end{aligned}
\]
Consequently, Condition \ref{A3} gives
\begin{equation}
    E\!\left[\overline g^{(i_1)}(X_{i_1})\mid X_{i_1}\right]
=g^{(i_1)}(X_{i_1}),
\qquad
\overline g^{(i_1)}(X_{i_1})=g^{(i_1)}(X_{i_1})+\eta_{i_1}(X_{i_1}).
\label{8.3}
\end{equation}
This is the only point at which the exact balance condition \ref{A3} is needed.

We first record the moment bounds used below. Conditional Jensen's inequality and \eqref{8.1} give
\begin{equation}
    \max_{1\le i\le n}E\|g^{(i_1)}(X_{i_1})\|^4\lesssim1.
\label{8.4}
\end{equation}

Moreover,
\begin{equation}
    \max_{1\le i\le n}E\|\eta_{i_1}(X_{i_1})\|^2\lesssim n^{-1}.
\label{8.5}
\end{equation}

To verify \eqref{8.5}, define for $\jjnri$,
\[
\zeta_{i_1,j_2,\dots,j_r}
:=
H(\xijr)
-E[H(\xijr)\mid X_{i_1}].
\]
Then
\begin{equation}
    E\|\eta_{i_1}(X_{i_1})\|^2
=
\frac{1}{P_{n-1,r-1}^2}
\sum_{\jjnri} \sum_{\kknri}
E\langle\zeta_{i_1,j_2,\dots,j_r},\zeta_{i_1,k_2,\dots,k_r}\rangle.
\label{8.6}
\end{equation}

If the partner-index sets of $\{j_2,\dots,j_r\}$ and $\{k_2,\dots,k_r\}$ are disjoint, then, conditionally on \(X_{i_1}\), the two random elements in \eqref{8.6} are independent and centered. Their contribution is therefore zero. The number of pairs whose partner-index sets have a nonempty intersection is \(O(n^{2r-3})\). By Cauchy–Schwarz and \eqref{8.1}, every remaining summand is bounded in absolute value by a constant. Since \(P_{n-1,r-1}^2\asymp n^{2r-2}\), \eqref{8.5} follows.

The identity \(P_{n,r}=nP_{n-1,r-1}\) gives
\begin{equation}
    \frac1n \sni \overline g^{(i_1)}(X_{i_1})=U_n^\circ.
\label{8.7}
\end{equation}

Hence the jackknife coefficient satisfies
\[
\widehat g^{(i_1)}(X_{i_1})
=
\overline g^{(i_1)}(X_{i_1})-U_n^\circ
=
g^{(i_1)}(X_{i_1})+\eta_{i_1}(X_{i_1})-U_n^\circ.
\label{8.8}
\]
It follows from \eqref{8.7} that
\begin{equation}
    \widehat\Gamma_n^{\mathrm{JMB}}
=
\frac1n\sni
\overline g^{(i_1)}(X_{i_1})\otimes\overline g^{(i_1)}(X_{i_1})
-U_n^\circ\otimes U_n^\circ.
\label{8.9}
\end{equation}

Define
\[
\begin{aligned}
A_n
&:=
\frac1n\sni
\{g^{(i_1)}(X_{i_1})\otimes g^{(i_1)}(X_{i_1})-E(g^{(i_1)}(X_{i_1})\otimes g^{(i_1)}(X_{i_1}))\},\\
B_n
&:=
\frac1n\sni g^{(i_1)}(X_{i_1})\otimes\eta_{i_1}(X_{i_1}),\\
C_n
&:=
\frac1n\sni\eta_{i_1}(X_{i_1})\otimes\eta_{i_1}(X_{i_1}).
\end{aligned}
\]
Since
\[
\Gamma_n=\frac1n\sni E(g^{(i_1)}(X_{i_1})\otimes g^{(i_1)}(X_{i_1})),
\]
equations \eqref{8.3} and \eqref{8.9} yield
\begin{equation}
     \widehat\Gamma_n^{\mathrm{JMB}}-\Gamma_n
=
A_n+B_n+B_n^*+C_n-U_n^\circ\otimes U_n^\circ.
\label{8.10}
\end{equation}
The variables
\[
g^{(i_1)}(X_{i_1})\otimes g^{(i_1)}(X_{i_1})-E(g^{(i_1)}(X_{i_1})\otimes g^{(i_1)}(X_{i_1})),
\qquad i=1,\ldots,n,
\]
are independent, centered random elements of the Hilbert space of Hilbert–Schmidt operators. Therefore, using
\(\|x\otimes x\|_\hs=\|x\|^2\) and \eqref{8.4},
\[
\begin{aligned}
E\|A_n\|_\hs^2
&=
\frac1{n^2}\sni
E\|g^{(i_1)}(X_{i_1})\otimes g^{(i_1)}(X_{i_1})-E(g^{(i_1)}(X_{i_1})\otimes g^{(i_1)}(X_{i_1}))\|_\hs^2\\
&\lesssim
\frac1{n^2}\sni E\|g^{(i_1)}(X_{i_1})\|^4
\lesssim n^{-1}.
\end{aligned}
\]
Thus,
\begin{equation}
    E\|A_n\|_\hs\lesssim n^{-1/2}.
\label{8.11}
\end{equation}
By Cauchy–Schwarz, \eqref{8.4}, and \eqref{8.5},
\begin{equation}
    \begin{aligned}
E\|B_n\|_\hs
&\le
\frac1n\sni E\{\|g^{(i_1)}(X_{i_1})\|\,\|\eta_{i_1}(X_{i_1})\|\}\\
&\le
\frac1n\sni
\{E\|g^{(i_1)}(X_{i_1})\|^2E\|\eta_{i_1}(X_{i_1})\|^2\}^{1/2}
\lesssim n^{-1/2},
\end{aligned}
\label{8.12}
\end{equation}
and the same bound holds for \(B_n^*\). Also,
\begin{equation}
    E\|C_n\|_\hs
\le
\frac1n\sni E\|\eta_{i_1}(X_{i_1})\|^2
\lesssim n^{-1}.
\label{8.13}
\end{equation}

Finally, Lemma~\ref{lem: boundlrHilb1} and the decomposition
\[
\frac{\sqrt n}{r}U_n^\circ=L_n+R_n,
\]
imply
\begin{equation}
    E\|U_n^\circ\|^2\lesssim n^{-1}.
\label{8.14}
\end{equation}

Combining \eqref{8.10}, \eqref{8.11}, \eqref{8.12}, \eqref{8.13}, and \eqref{8.14} gives
\begin{equation}
    E\|\widehat\Gamma_n^{\mathrm{JMB}}-\Gamma_n\|_\hs
\lesssim n^{-1/2}.
\label{8.15}
\end{equation}

This proves part (b) for the JMB.

We next establish the trace bound. Taking traces in \eqref{8.10} gives
\begin{equation}
    \begin{aligned}
\operatorname{tr}
(\widehat\Gamma_n^{\mathrm{JMB}}-\Gamma_n)
={}&
\frac1n\sni
\{\|g^{(i_1)}(X_{i_1})\|^2-E\|g^{(i_1)}(X_{i_1})\|^2\}\\
&+\frac2n\sni\langle g^{(i_1)}(X_{i_1}),\eta_{i_1}(X_{i_1})\rangle
+\frac1n\sni\|\eta_{i_1}(X_{i_1})\|^2
-\|U_n^\circ\|^2.
\end{aligned}
\label{8.16}
\end{equation}

Independence of the \(g^{(i_1)}(X_{i_1})\)'s and \eqref{8.4} imply
\begin{equation}
    E\left|
\frac1n\sni
\{\|g^{(i_1)}(X_{i_1})\|^2-E\|g^{(i_1)}(X_{i_1})\|^2\}
\right|
\le
\left\{
\frac1{n^2}\sni E\|g^{(i_1)}(X_{i_1})\|^4
\right\}^{1/2}
\lesssim n^{-1/2}.
\label{8.17}
\end{equation}

Furthermore, by \eqref{8.4}, and \eqref{8.5},
\begin{equation}
    E\left|
\frac1n\sni\langle g^{(i_1)}(X_{i_1}),\eta_{i_1}(X_{i_1})\rangle
\right|
\le
\frac1n\sni
\{E\|g^{(i_1)}(X_{i_1})\|^2E\|\eta_{i_1}(X_{i_1})\|^2\}^{1/2}
\lesssim n^{-1/2},
\label{8.18}
\end{equation}

whereas
\begin{equation}
    \frac1n\sni E\|\eta_{i_1}(X_{i_1})\|^2\lesssim n^{-1},
\qquad
E\|U_n^\circ\|^2\lesssim n^{-1}.
\label{8.19}
\end{equation}

Equations \eqref{8.16}, \eqref{8.17}, \eqref{8.18}, and \eqref{8.19} prove
\begin{equation}
    E\left|
\operatorname{tr}
(\widehat\Gamma_n^{\mathrm{JMB}}-\Gamma_n)
\right|
\lesssim n^{-1/2}.
\label{8.20}
\end{equation}

This proves part (d).

By \eqref{8.4},
\[
\operatorname{tr}(\Gamma_n)
=
\frac1n\sni E\|g^{(i_1)}(X_{i_1})\|^2
\lesssim1.
\]
Consequently, \eqref{8.20} gives
\[
E\operatorname{tr}(\widehat\Gamma_n^{\mathrm{JMB}})
\le
\operatorname{tr}(\Gamma_n)
+
E\left|
\operatorname{tr}
(\widehat\Gamma_n^{\mathrm{JMB}}-\Gamma_n)
\right|
\lesssim1,
\label{8.21}
\]
which proves part (a). Similarly, since
\(\|T\|_{\mathrm{op}}\le\|T\|_\hs\),
\begin{equation}
    \begin{aligned}
E\|\widehat\Gamma_n^{\mathrm{JMB}}\|_{\mathrm{op}}
&\le
\|\Gamma_n\|_{\mathrm{op}}
+
E\|\widehat\Gamma_n^{\mathrm{JMB}}-\Gamma_n\|_{\mathrm{op}}\\
&\le
\operatorname{tr}(\Gamma_n)
+
E\|\widehat\Gamma_n^{\mathrm{JMB}}-\Gamma_n\|_\hs
\lesssim1.
\end{aligned}
\label{8.22}
\end{equation}

This proves part (c).

By construction,
\[
\widehat\Gamma_n^{\mathrm{GWB}}
=
\widehat\Gamma_n^{\mathrm{JMB}}.
\]
Thus, all four conclusions also hold for the Gaussian weighted bootstrap.

It remains to prove the results for the empirical bootstrap. Because the covariance estimators are invariant under deterministic shifts of the kernel, we continue to use \(H=\widetilde h-\theta\). Write
\[
\widehat\Gamma_n^{\mathrm{EB}}
=
\widehat\Gamma_{n,1}^{\mathrm{EB}}
-
\widehat\Gamma_{n,2}^{\mathrm{EB}},
\qquad
\widehat\Gamma_n^{\mathrm{JMB}}
=
\widehat\Gamma_{n,1}^{\mathrm{JMB}}
-
\widehat\Gamma_{n,2}^{\mathrm{JMB}},
\]
where
\[
\widehat\Gamma_{n,1}^{\mathrm{EB}}
=
\frac1{n^{2r-1}}
\sni
\sum_{\jjnci} \sum_{\kknci}
H(\xijr)
\otimes
H(X_{i_1},X_{k_{[2:r]}}),
\]
\[
\widehat\Gamma_{n,2}^{\mathrm{EB}}
=
\frac1{n^{2r}}
\sum_{\iicr} \sum_{\jjcr}
H(\xir)\otimes H(\xjr),
\]
and
\[
\widehat\Gamma_{n,1}^{\mathrm{JMB}}
=
\frac1{nP_{n-1,r-1}^2}
\sni
\sum_{\jjnri} \sum_{\kknri}
H(\xijr)
\otimes
H(X_{i_1},X_{k_{[2:r]}}),
\]
\[
\widehat\Gamma_{n,2}^{\mathrm{JMB}}
=
\frac1{P_{n,r}^2}
\sum_{\iinr} \sum_{\jjnr}
H(\xir)\otimes H(\xjr).
\]
Here $\mathcal{C}_{n, -i}^r$ is defined similarly to as $\mathcal{I}_{n, -i}^r$ by removing the fixed index $i$ from $\mathcal{C}_{n}^r.$ 
Set
\[
c_{n,r}:=\frac{P_{n,r}^2}{n^{2r}}.
\]
For fixed \(r\),
\begin{equation}
    |c_{n,r}-1|\lesssim n^{-1}.
\label{8.23}
\end{equation}

Let \(\mathcal B_{1,n}\) be the collection of indices
\[
(i_1,j_1,\dots,j_r,k_1,\dots,k_r)
\in
\mathcal{C}_n^{2r-1}
\]
for which either \((i_1,j_1,\dots,j_r)\notin\mathcal I_n^r\) or
\((i,k_1,\dots,k_r)\notin\mathcal I_n^r\). Similarly, let
\[
\mathcal B_{2,n}
:=
(\mathcal C_n^r\times \mathcal C_n^r)\setminus(\mathcal I_n^r\times\mathcal I_n^r).
\]
No restriction is imposed on intersections between the two different \(r\)-tuples. Since failure of pairwise distinctness imposes at least one index equality,
\begin{equation}
    |\mathcal B_{1,n}|\lesssim n^{2r-2},
\qquad
|\mathcal B_{2,n}|\lesssim n^{2r-1}.
\label{8.24}
\end{equation}

It follows that
\begin{equation}
    \widehat\Gamma_n^{\mathrm{EB}}
=
c_{n,r}\widehat\Gamma_n^{\mathrm{JMB}}
+D_{1n}-D_{2n},
\label{8.25}
\end{equation}
where
\[
D_{1n}
=
\frac1{n^{2r-1}}
\sum_{(i_1,j_1,\dots,j_r,k_1,\dots,k_r)\in\mathcal B_{1,n}}
H(\xijr)\otimes H(X_{i_1},X_{k_{[2:r]}})
\]
and
\[
D_{2n}
=
\frac1{n^{2r}}
\sum_{(i_1,\dots,i_r,j_1,\dots,j_r)\in\mathcal B_{2,n}}
H(\xir)\otimes H(\xjr).
\]

Using
\[
\|x\otimes y\|_\hs=\|x\|\,\|y\|
\]
together with \eqref{8.1}, \eqref{8.24}, and Cauchy–Schwarz gives
\begin{equation}
    E\|D_{1n}\|_\hs
+
E\|D_{2n}\|_\hs
\lesssim n^{-1}.
\label{8.26}
\end{equation}

Since \(\widehat\Gamma_n^{\mathrm{JMB}}\) is nonnegative,
\[
E\|\widehat\Gamma_n^{\mathrm{JMB}}\|_\hs
\le
E\operatorname{tr}(\widehat\Gamma_n^{\mathrm{JMB}})
\lesssim1.
\]
Therefore, \eqref{8.23}, \eqref{8.25}, and \eqref{8.26} imply
\begin{equation}
    E\|\widehat\Gamma_n^{\mathrm{EB}}
-\widehat\Gamma_n^{\mathrm{JMB}}\|_\hs
\lesssim n^{-1}.
\label{8.27}
\end{equation}

Combining \eqref{8.27} with \eqref{8.15} yields
\begin{equation}
    E\|\widehat\Gamma_n^{\mathrm{EB}}-\Gamma_n\|_\hs
\lesssim n^{-1/2}.
\label{8.28}
\end{equation}

For the trace, use
\[
|\operatorname{tr}(x\otimes y)|
=
|\langle y,x\rangle|
\le\|x\|\,\|y\|.
\]
The same counting argument that gives \eqref{8.26} therefore gives
\[
E|\operatorname{tr}(D_{1n})|
+
E|\operatorname{tr}(D_{2n})|
\lesssim n^{-1}.
\]
Together with \eqref{8.20}, \eqref{8.23}, and \eqref{8.25}, this yields
\begin{equation}
    E\left|
\operatorname{tr}
(\widehat\Gamma_n^{\mathrm{EB}}-\Gamma_n)
\right|
\lesssim n^{-1/2}.
\label{8.29}
\end{equation}
Finally, \eqref{8.28} and \eqref{8.29}, \(\operatorname{tr}(\Gamma_n)\lesssim1\), and
\(\|\cdot\|_{\mathrm{op}}\le\|\cdot\|_\hs\) imply
\[
E\operatorname{tr}(\widehat\Gamma_n^{\mathrm{EB}})
\lesssim1,
\qquad
E\|\widehat\Gamma_n^{\mathrm{EB}}\|_{\mathrm{op}}
\lesssim1.
\]
This proves parts (a)–(d) for the empirical bootstrap and completes the proof.

\qed

\textbf{Proof of Lemma~\ref{lem: boundlrRdboots}.}
    Parts (a)--(d) of this proof are a direct consequence of Lemma~\ref{lem: boundlrHilb2} in $\R^d$. For the proof of part (e), we only provide the proof for JMB, and the others will follow similarly.  We may assume that $\Delta_{n,d}<1,$ since otherwise the corresponding probability bound is trivial.

We first consider the JMB. Retain the notation from the proof of Lemma~\ref{lem: boundlrHilb2}(ii) and write
\[
\bar g^{(i_1)}(X_{i_1})=g^{(i_1)}(X_{i_1})+\eta_{i_1}(X_{i_1}) ,
\qquad
\E[\eta_{i_1}(X_{i_1}) \mid X_i]=0.
\]
Define
\[
\bar\Gamma_n^{\mathrm{JMB}}
:=
\frac1n\sni
\bar g^{(i_1)}(X_{i_1})\bar g^{(i_1)}(X_{i_1})^\top.
\]
Since
\[
U_n^\circ
=
\frac1n\sni\bar g^{(i_1)}(X_{i_1}),
\]
we have
\[
\widehat\Gamma_n^{\mathrm{JMB}}
=
\bar\Gamma_n^{\mathrm{JMB}}
-U_n^\circ(U_n^\circ)^\top
\preceq
\bar\Gamma_n^{\mathrm{JMB}},
\]
where for two positive semi-definite matrices $A$ and $B$, $A \preceq B$ denotes $(B-A)$ is also positive semi-definite. Therefore,
\begin{equation}
\E\|\widehat\Gamma_n^{\mathrm{JMB}}\|_{\mathrm{op}}
\le
\|\Gamma_n\|_{\mathrm{op}}
+
\E\|\bar\Gamma_n^{\mathrm{JMB}}-\Gamma_n\|_{\mathrm{op}}.
\label{eq:lem64-reduction}
\end{equation}

Put
\[
\Gamma_n^{g}
:=
\frac1n\sni
g^{(i_1)}(X_{i_1})g^{(i_1)}(X_{i_1})^\top
\]
and
\[
\Gamma_n^{\eta}
:=
\frac1n\sni\eta_{i_1}(X_{i_1}) \eta_{i_1}(X_{i_1}) ^\top.
\]
Expanding \(\bar g^{(i)}=g^{(i)}+\eta_{i_1}\), we obtain
\begin{align}
\E\|\bar\Gamma_n^{\mathrm{JMB}}-\Gamma_n\|_{\mathrm{op}}
&\le S_1+2S_2+S_3,
\label{eq:lem64-decomposition}
\end{align}
where
\[
S_1
:=
\E\|\Gamma_n^{g}-\Gamma_n\|_{\mathrm{op}},
\]
\[
S_2
:=
\E\left\|
\frac1n\sni
g^{(i_1)}(X_{i_1})\eta_{i_1}(X_{i_1}) ^\top
\right\|_{\mathrm{op}},
\]
and
\[
S_3
:=
\E\|\Gamma_n^\eta\|_{\mathrm{op}}.
\]

By Proposition~\ref{prop: power} of \cite{fang2024large} and Assumption~\ref{BN},
\begin{align}
S_1
\lesssim
\sqrt{\frac{\|\Gamma_n\|_{\mathrm{op}}
                  \tr(\Gamma_n)}{n}}
+\frac{\tr(\Gamma_n)}{n}  \lesssim
\left(\frac{d^{q_1+q_3}}{n}\right)^{1/2}
+\frac{d^{q_3}}{n}.
\label{eq:lem64-S1}
\end{align}
Moreover, since \(\Gamma_n^\eta\) is nonnegative,
\begin{align}
S_3
&\le
\E\tr(\Gamma_n^\eta)
=
\frac1n\sni\E\|\eta_{i_1}(X_{i_1}) \|^2
\lesssim\frac dn.
\label{eq:lem64-S3}
\end{align}

For the cross term, the matrix Cauchy--Schwarz inequality gives
\[
\left\|
\frac1n\sni
g^{(i_1)}(X_{i_1})\eta_{i_1}(X_{i_1}) ^\top
\right\|_{\mathrm{op}}
\le
\|\Gamma_n^g\|_{\mathrm{op}}^{1/2}
\|\Gamma_n^\eta\|_{\mathrm{op}}^{1/2}.
\]
Hence, by Cauchy--Schwarz,
\begin{align}
S_2
\le
\{\E\|\Gamma_n^g\|_{\mathrm{op}}\}^{1/2}
\{\E\|\Gamma_n^\eta\|_{\mathrm{op}}\}^{1/2} \le
\{S_1+\|\Gamma_n\|_{\mathrm{op}}\}^{1/2}
S_3^{1/2}.
\label{eq:lem64-S2-compact}
\end{align}
Substituting \eqref{eq:lem64-S1} and
\eqref{eq:lem64-S3} yields
\begin{align}
S_2
\lesssim{}&
\left(
\frac{d^{q_1+q_3+2}}{n^3}
\right)^{1/4}
+
\left(
\frac{d^{1+q_3}}{n^2}
\right)^{1/2}
+
\left(
\frac{d^{1+q_1}}{n}
\right)^{1/2}.
\label{eq:lem64-S2}
\end{align}
We now use the assumption \(\Delta_{n,d}<1\). Since \(\Gamma_n\) is nonnegative,
\[
\|\Gamma_n\|_{\mathrm F}^2
\le
\|\Gamma_n\|_{\mathrm{op}}\tr(\Gamma_n).
\]
The assumed spectral growth rates therefore imply
\[
2q_2\le q_1+q_3.
\]
Since \(q_3\le1\),
\[
3+q_1-4q_2
\ge
3+q_1-2q_1-2q_3
\ge
1-q_1.
\]
On the other hand, the third term in \(\Delta_{n,d}\) gives
\[
\Delta_{n,d}
\ge
\left(
\frac{d^{3+q_1-4q_2}}{n\ell_n}
\right)^{1/6}.
\]
Because \(\ell_n< 1\), the condition \(\Delta_{n,d}<1\) implies
\begin{equation}
n\gtrsim d^{1-q_1}.
\label{eq:lem64-growth}
\end{equation}
Using \(q_3\le1\) and \eqref{eq:lem64-growth}, we obtain
\[
\left(\frac{d^{q_1+q_3}}n\right)^{1/2}
\lesssim d^{q_1},
\qquad
\frac{d^{q_3}}n\lesssim d^{q_1},
\qquad
\frac dn\lesssim d^{q_1}.
\]
Thus, \eqref{eq:lem64-S1}, \eqref{eq:lem64-S3}, and \eqref{eq:lem64-S2-compact} imply
\[
S_1+S_2+S_3\lesssim d^{q_1}.
\]
Combining this with \eqref{eq:lem64-reduction} and
\eqref{eq:lem64-decomposition} gives
\[
\E\|\widehat\Gamma_n^{\mathrm{JMB}}\|_{\mathrm{op}}
\lesssim d^{q_1}.
\]
\qed

\end{document}